\newcommand{\ie}{\emph{i.e.}}
\newcommand{\eg}{\emph{e.g.}}
\newcommand{\cf}{\emph{cf.}}
\newcommand{\Real}{\mathbb{R}}
\newcommand{\Com}{\mathbb{C}}
\newcommand{\Nat}{\mathbb{N}}
\newcommand{\Int}{\mathbb{Z}}
\newcommand{\sgn}{\mathop{\mathrm{sgn}}\nolimits}
\newcommand{\supp}{\mathop{\mathrm{supp}}\nolimits}
\newcommand{\Dom}{\mathsf{D}}
\newcommand{\dist}{\mathop{\mathrm{dist}}\nolimits}
\newcommand{\esssup}{\mathop{\mathrm{ess\;\!sup}}}
\newcommand{\eps}{\varepsilon}
\newcommand{\sii}{L^2}
\newcommand{\der}{\mathrm{d}}
\renewcommand{\Re}{\operatorname{Re}}
\renewcommand{\Im}{\operatorname{Im}}
\newcommand{\CcR}{{C_0^{\infty}(\Real)}}
\newcommand{\BigO}{\mathcal{O}}
\newcommand{\ls}{\lesssim}
\newcommand{\gs}{\gtrsim}
\newcommand{\cI}{\mathcal I}
\newcommand{\cJ}{\mathcal J}
\newtheorem{Theorem}{Theorem}[section]
\newtheorem{Lemma}[Theorem]{Lemma}
\newtheorem{Proposition}[Theorem]{Proposition}
\theoremstyle{definition}
\newtheorem{Remark}[Theorem]{Remark}
\newtheorem{Assumption}{Assumption}
\newtheorem{Example}[Theorem]{Example}
\numberwithin{equation}{section}
\author{David Krej\v ci\v r\'ik}
\address[David Krej\v ci\v r\'ik]{Department of Mathematics, Faculty of Nuclear Sciences and Physical Engineering, Czech Technical University in Prague, Trojanova 13, 12000 Prague~2, Czech Republic}
\email{david.krejcirik@fjfi.cvut.cz}
\author{Petr Siegl}
\address[Petr Siegl]{
School of Mathematics and Physics, 
Queen's University Belfast, University Road, Belfast BT7 1NN, Northern Ireland, UK}
\email{p.siegl@qub.ac.uk}
\begin{document}
%
\title{Pseudomodes for Schr\"odinger operators with complex potentials}

\date{August 15, 2018}

\subjclass[2010]{34E20, 34L40, 35P20, 47A10, 81Q12, 81Q20}

\keywords{pseudospectrum, Schr\"odinger operators, complex potential, WKB}

\begin{abstract}
For one-dimensional Schr\"odinger operators with complex-valued potentials, 
we construct pseudomodes corresponding to large pseudoeigenvalues.
We develop a first systematic non-semi-classical approach,
which results in a substantial progress in achieving 
optimal conditions and conclusions as well as in covering a wide class
of previously inaccessible potentials, including discontinuous ones.
Applications of the present results to higher-dimensional
Schr\"odinger operators are also discussed.
\end{abstract}

\maketitle

%

\section{Introduction}
%
While the spectral theorem reduces the study of self-adjoint operators
to determining the individual components of the spectrum
and the corresponding spectral measures,
it is well known that the spectrum of a non-normal operator
provides by far insufficient information about its properties.
It is not the spectrum that determines the decay of the associated heat semigroup 
and the behaviour of eigenvalues under small perturbations,
but rather the \emph{pseudospectrum}, which measures the largeness of the resolvent, {see \eg~\cite{Trefethen-Embree,Davies_2007,KS-book}.

The $\eps$-pseudospectrum of a closed operator~$H$ consists 
of the union of its spectrum and complex points~$\lambda$ satisfying 
$\|(H-\lambda)f\| < \eps \, \|f\|$ for some vector~$f$ from the domain of~$H$. 
The number~$\lambda$ and the vector~$f$ are respectively called 
the \emph{pseudoeigenvalue} (or \emph{approximate eigenvalue})
and \emph{pseudoeigenvector} (or \emph{pseudomode}) of~$H$.
The pseudoeigenvalues of~$H$ may be turned into genuine eigenvalues 
of a perturbed operator $H+L$ with $\|L\| < \eps$
and they can lie outside (in fact ``very far'' from)
the $\eps$-neighbourhood of 
the spectrum of~$H$ if the operator is not normal.
This is the well-known \emph{spectral instability} of 
non-normal operators under small perturbations.

This paper is concerned with a study, in several aspects complete, 
of approximate eigenvalues and pseudomodes 
of the one-dimensional Schr\"odinger operators
\begin{equation}\label{Hamiltonian.intro}
H_V := -\frac{\der^2}{\der x^2} + V(x)  
\,,
\end{equation}
where $V$ is a \emph{complex}-valued function.
We consider $\sii$-realisations of~$H_V$ 
on the whole line~$\Real$ or the semi-axis~$\Real_+$, the latter having 
immediate consequences for multi-dimensional operators 
with radial potentials and their perturbations.
Thus our objective is to construct 
a $\lambda$-dependent family of pseudomodes~$f_\lambda$ such that 
\begin{equation}\label{objective.intro}
\| (H_V - \lambda) f_\lambda \| = o(1) \, \|f_\lambda\|
\qquad \text{as}\qquad 
\lambda \to \infty \quad \text{in} \quad \Omega \subset \Com.
\end{equation}

The abstract self-adjoint theory yields immediately 
that real-valued potentials~$V$ are irrelevant here, 
since then \eqref{objective.intro} may hold 
only when~$\lambda$ approaches the spectrum of $H_V$.
On the other hand, the by now well-known examples of 
potentials for which \eqref{objective.intro} holds in 
vast complex regions~$\Omega$ are just purely imaginary 
monomials $V(x):=i x^n$ and their perturbations, 
see \eg~\cite{Davies_1999-NSA, Boulton_2002,  
Pravda-Starov_2006, SK, KSTV, Novak-2015-54}. 
Hence, the state of the art of the current research
in construction of the ``large-energy'' pseudomodes 
for (non-semiclassical) Schr\"odinger operators
is by far incomplete and the objective of this paper 
is to fill up the gap. In fact, all known cases 
(as well as all semi-classical ones) represent the simplest 
illustrations of our results, 
see Examples~\ref{ex:pol.1}, \ref{ex:pol.Omega} and \ref{ex:s-c}. 

The fundamental questions that we address here read as follows: 
\begin{itemize}
\item For which potentials does there exist
a non-trivial region $\Omega \subset \Com$ where \eqref{objective.intro} holds?
\item Comparing to $\Im V$,  how large can $\Re V$ be 
so that \eqref{objective.intro} is preserved?
\item Depending on $V$, what is the shape of $\Omega$?
\item Is the polynomial-like character of the so far studied operators important?
\item What is the role of the regularity of $V$? 
\end{itemize}

The main results of this paper giving answers to the raised questions are Theorems \ref{thm:basic} (on pseudomodes for $\lambda \to + \infty$), Theorem \ref{thm:mol} (on pseudomodes for $\lambda \to + \infty$ and potentials of low regularity) and Theorem~\ref{thm:basic.gen} (on pseudomodes for $\lambda \to \infty$ on general curves in $\Com$). These statements proved under technical Assumptions~\ref{asm:basic}, \ref{asm:W} and \ref{asm:basic.gen} are applied to more concrete classes of potentials in Sections~\ref{subsec:Ex.1}, \ref{subsec:Ex.mol} and \ref{subsec:Ex.gen}.

Basically all available results 
on non-trivial pseudospectra of Schr\"odinger operators 
are deduced by scaling 	from \emph{semiclassical pseudomodes},
where a small parameter $h^2$ is added in front of the second-derivative
in~\eqref{objective.intro}, 
see \eg, \cite{Davies_1999-NSA,Dencker-Sjostrand-Zworski_2004}.
However, such an approach has several drawbacks. 
First of all, only very specific (homogeneous or their perturbations) 
potentials can be treated and unboundedness of $\Im V$ 
at infinity may seem to be crucial due to the scaling.
Second, the artificial transition to the new parameter~$h$, 
related in various ways to~$\lambda$, 
complicates the natural interpretation of the results 
as well as the main points in the proofs.
Finally, with the exception of 
the \emph{imaginary shifted harmonic oscillator} 
$V(x):=(x+i)^2$ treated in~\cite{KSTV}, 
no claims seem to be available when $\Re V$ is larger than $\Im V$ at infinity.  
For these reasons, in this paper
we attack the problem directly 
(without introducing the semiclassical parameter~$h$).

The present results also have a connection 
to some open problems posed during the 2015 AIM workshop~\cite{AIM-2015}. 
In particular, we would like to emphasise the following 
insights provided by this paper.

\subsection*{The semiclassical setting as a consequence}
From our approach the known claims in the semiclassical setting 
follow immediately. In particular, the Davies' condition~\cite{Davies_1999-NSA} 
$\Im V'\neq 0$ or its (weaker) versions
(see \cite{Zworski_2001,Pravda-Starov-2004-460}) 
can be easily generalised, see Example~\ref{ex:s-c}.
It is also worth noting that our general non-semiclassical pseudomodes 
do not always localise, instead their support may extend.
 
\subsection*{Optimality of potentials}
Our assumption \eqref{asm.eq:ReV} on the allowed size of $\Re V$ is optimal, 
at least for polynomial-like potentials 
(with $\nu_\pm=-1$ in assumption \eqref{asm.eq:V'}). Indeed, by completely different methods, it has been established in \cite{Mityagin-toappear,Mityagin-2017-272} 
that \eg~for potentials~$V$ 
satisfying $\Re V(x) = |x|^\beta$ with $\beta \geq 1$ and 
\begin{equation}\label{MS.cond}
\exists \epsilon>0, \quad |\Im V(x)|^2 = \BigO(|x|^{\beta-2 - \epsilon}), 
\quad |x| \to \infty,
\end{equation}
the eigensystem of $H_V$ contains a Riesz basis (and there are possibly only finitely many degenerate eigenvalues) and hence the only non-trivial pseudomodes exist for $\lambda$ close to the eigenvalues of $H_V$ (with known asymptotics, see~\cite{Mityagin-toappear}). In turn, the current results suggest that the condition \eqref{MS.cond} is optimal with respect to the Riesz basis property of~$H_V$ (which can be indeed concluded if more information about the position of eigenvalues of~$H_V$ is available) and confirms that the borderline case (potentials with $\epsilon = 0$ in \eqref{MS.cond}) is the most challenging one, see \cite[Open Problem~15.1]{AIM-2015}. Moreover, the assumption \eqref{asm.eq:ReV} has a very natural interpretation, namely, the pseudomodes loose their exponential decay if \eqref{asm.eq:ReV} is not satisfied, see Remark~\ref{rem:ReV.asm}.

\subsection*{Optimality of pseudospectral regions}
Our restrictions on the set $\Omega$ in~\eqref{a.gen}, 
expressed in terms of conditions on $a:=\Re\lambda$ and $b:=\Im\lambda$,
seem to be optimal.
The optimality for the \emph{rotated harmonic oscillator} $V(x):=ix^2$ 
follows by Boulton's conjecture~\cite{Boulton_2002}
solved by Pravda-Starov~\cite{Pravda-Starov_2006},
see Example~\ref{ex:pol.Omega}. 
The lower bound of~\eqref{optimality} is also known to be optimal 
for the \emph{imaginary cubic oscillator} $V(x):=ix^3$, 
see \cite[{Sec.~4.1}]{BordeauxMontrieux-2013}.
The study of optimality of our estimates on the region~$\Omega$
in general cases constitutes an interesting open problem.}
 
\subsection*{Generality}
We are able to treat a wide class of potentials being far beyond polynomial or scalable ones (we also allow a large $\Re V$ without restricting its sign). The method can be further straightforwardly generalised for even wilder potentials than already a quite wide range covered here 
(from bounded or even decaying, see Section~\ref{subsec:dec}, to super-exponential ones). 
For instance, the previously inaccessible (non-scalable) cases like 
$V(x):=i \sinh(x)$ or $V(x):=i \arctan(x)$ are included, 
see Examples~\ref{ex:sinh} and~\ref{ex:arctan}. 
It is also important to stress that 
for realisations in $L^2(\Real)$, 
just the different asymptotic
behaviour of $\Im V$ at $\pm \infty$:
\begin{equation}\label{Ass.intro}
\lim_{x\to -\infty} \Im V(x) 
\ \cdot \
\lim_{x\to +\infty} \Im V(x) 
\ < \ 0 
\end{equation}
(see also \eqref{asm.eq:ImV} for a slight generalisation)
is crucial to ensure the ``significant non-self-adjointness'' of $H_V$ 
and thus the validity of~\eqref{objective.intro} for $\lambda \to + \infty$. 
For decaying but non-integrable potentials~$V$, condition
\eqref{Ass.intro}~can be further weakened 
by requiring that~$\Im V$ approaches~$0$ 
from opposite sides at $\pm \infty$, 
see Section~\ref{subsec:dec}. 
The various conditions of the type  \eqref{Ass.intro} can be viewed as 
a global version of the local Davies' condition $\Im V' \neq 0$ or its weaker versions mentioned above. 

\subsection*{Rough potentials}
In fact, we cover even \emph{discontinuous} potentials, 
which were previously inaccessible to semiclassical techniques.
This is achieved by developing
a robust method of $\lambda$-dependent mollifications
of the potential. 
This new idea enables us to eventually solve an open problem raised during the AIM workshop \cite[Open Problem~10.1]{AIM-2015}.

\subsection*{The regularity of potentials 
and decay rates of pseudomodes}
We explicitly demonstrate the crucial influence of the regularity (or local deformations) of $V$ on the best possible rates in \eqref{objective.intro}. 
The existing results suggest a difference in the rates for analytic and smooth potentials (exponential versus ``faster than any power'' rates), see \eg~\cite{Davies_1999-NSA,Dencker-Sjostrand-Zworski_2004}. 
However, the optimal upper bounds for the resolvent
norm are usually not available and so such observations are not always proved. 
In this paper we stress (and prove) the difference in rates 
for various step-like potentials of the type
($\arctan$ may be replaced by any ``regularisation'' of $\sgn$)
\begin{equation}\label{step-like}
V_1(x):= i \sgn(x) 
\qquad \text{versus} \qquad 
V_2(x):= i \arctan(x).
\end{equation}	
Here the best possible rate is linear in the first case 
(as proved in~\cite{HK} by a careful analysis of 
the resolvent kernel) 
versus the ``faster than any power'' rate in the second case, 
see Example~\ref{ex:arctan}. 
Notice that the even more drastic local deformation, 
namely the operator $ -{\der^2}/{\der x^2} + i \sgn (x)$ 
subject to an \emph{additional} 
Dirichlet boundary condition at~$0$, 
exhibits \emph{no decay} for $\lambda \to +\infty$ in~\eqref{objective.intro}, 
since such an operator becomes normal.

\subsection*{Laptev-Safronov eigenvalue bounds}
Our results for decaying potentials from Section~\ref{subsec:dec} 
show that the bound on individual eigenvalues
of one-di\-men\-sion\-al Schr\"odinger operators 
due to Laptev and Safronov
(see \cite[Thm.~5]{Laptev-Safronov_2009} and \cite[Open Problem 7.1]{AIM-2015})
cannot be improved using the Birman-Schwinger technique (since the norm estimate on the Birman-Schwinger operator provides a resolvent estimate). To justify the latter, we find simple $L^p$-potentials with $p>1$ for which~\eqref{objective.intro} holds, with the decay rate faster than any power of $1/|\lambda|$, in a region~$\Omega$ determined by~\eqref{Omega.dec},
which essentially coincides with the set appearing in \cite[Thm.~5]{Laptev-Safronov_2009}. Thus the very natural reason for the appearance of such $\Omega$ is provided. 

The existence of this region~$\Omega$, 
where the spectrum of $H_V$ is extremely unstable with respect to further, even tiny, perturbations, is a crucial difference with respect to the $L^1$-potentials. 
In the latter case, the resolvent estimate preventing that the resolvent of~$H_V$ explodes for large $\lambda$'s again follows from the Birman-Schwinger estimate, see \cite{Abramov-Aslanyan-Davies_2001}. 

\subsection*{Higher dimensions}
The results and methods of this paper are essentially one-dimensional.
Nonetheless, the results have consequences for multi-dimensional Schr\"odinger operators with (at least local) symmetries and their not too strong perturbations. The pseudomodes in $L^2(\Real)$ from Section~\ref{sec:pseudo.real} are obviously applicable for problems allowing for the separation of variables in Cartesian coordinates, while the pseudomodes in $L^2(\Real_+)$ from Section~\ref{sec:pseudo.gen} are applicable for radially symmetric problems. Finally, the pseudomodes from Example~\ref{ex:sing} arising due to the strongly singular potential at $0$, namely 
\begin{equation}\label{V.sing.intro}
V(r) := \frac{c}{r^2} + \frac{i}{r^\alpha}
\,, \qquad
c \in \Real, \quad \alpha > 2, \quad r>0 
\,,
\end{equation}
localise in a vicinity of $0$ and so are applicable for multi-dimensional potentials
with a local radial singularity of the type \eqref{V.sing.intro}. Unlike in one dimension, these pseudomodes do not show the optimality of region $\Omega$ in Laptev-Safronov multi-dimensional eigenvalue bounds since the condition $\alpha>2$ cannot be satisfied for $V\in L^p(\Real^d)$ with $p \geq d/2$ (or $p>1$ for $d=2$). 
\subsection*{Organisation of the paper}
In Section~\ref{sec:prelim} we outline our strategy 
to construct the pseudomodes and settle a number of important prerequisites
for the subsequent applications.
Section~\ref{sec:pseudo.real} is devoted to large positive pseudoeigenvalues,
while the case of general complex regions is treated only 
in Section~\ref{sec:pseudo.gen};
these two sections are concerned with sufficiently regular
potentials (at least continuous).
Large positive pseudoeigenvalues
for discontinuous and singular potentials
are dealt with in the intermediate Section~\ref{sec:low.reg}.

\subsection*{Notations}
Let us fix some notations employed throughout the paper. 
We use the following conventions for number sets, 
$\Nat := \{1,2,\dots\}$, $\Nat_0:=\Nat \cup \{0\}$, $\Real_+ := (0,\infty)$ and $\Real_- := (-\infty,0)$.
Given an interval $I \subset \Real$, 
the norm of $L^p(I)$ is denoted by $\|\cdot\|_{L^p(I)}$.
If $I=\Real$, we abbreviate $\|\cdot\|_p := \|\cdot\|_{L^p(\Real)}$
and $\|\cdot\| := \|\cdot\|_2$.
The $L^p$ spaces with a weight are denoted by 
\begin{equation}
L^p_\alpha (I) := \{ f \text{ measurable} \, : \, \langle x \rangle^\alpha f(x) \in L^p(I) \}, \quad \alpha \in \Real 
\,,
\end{equation}
where $\langle x \rangle := (1+x^2)^\frac12$.
For an ``integer interval'' we use the double brackets, $[[m,n]]:=[m,n] \cap \Int$. To avoid using many irrelevant constants, we employ the convention that $a \lesssim b$ if there is a constant $C>0$, independent of $\lambda$ and $x$ 
(or any other relevant parameter), such that $a \leq C b$; the convention for $\gtrsim$ is analogous. By $a \approx b$ it is meant that $a \ls b$ and $a \gs b$.

\section{Preliminaries}
\label{sec:prelim}
A standing hypothesis of this paper is that the complex-valued potential~$V$
satisfies the local square-integrability condition
$V \in \sii_\mathrm{loc}(\Real)$.
We understand the Schr\"odinger operator~\eqref{Hamiltonian} 
as the maximal operator generated by the differential expression, 
\ie,
\begin{equation}\label{Hamiltonian}
\begin{aligned}
H_V f &:= -f'' + V f \,,
\\
\Dom(H_V) &:=
\{f \in \sii(\Real) : -f'' + V f \in \sii(\Real) \}
\,.
\end{aligned}
\end{equation}

If $\Re V$ is bounded from below, 
Kato's theorem (\cf~\cite[Sec.~VII.2.2]{Edmunds-Evans}) yields that 
$H_V$ is quasi-m-accretive and, moreover, $\CcR$ is a core of~$H_V$.
The quasi-m-accretivity ensures that~\eqref{Hamiltonian}
is well defined as a closed operator with non-empty resolvent set containing some open left half-plane.
The latter properties of $H_V$ are valid also in the non-accretive case 
under alternative assumptions on~$V$, see \cite{Krejcirik-2016}.
For the pseudomode constructions performed 
in the present paper, however, not even the closedness of~$H_V$ is necessary.
 
\subsection{The JWKB ansatz}
Our construction of pseudomodes
is based on the Liouville-Green approximation 
(also known as the JWKB method), 
see \eg~\cite{Davies_1999-NSA,Olver-1997}.

If~$V$ were constant, \ie\ $V(x) = V_0$ for all $x \in \Real$,
exact solutions of the differential equation
$-g''+V_0 g = \lambda g$ would be given by 
\begin{equation}\label{WKB}
e^{\pm i\int_0^x \sqrt{\lambda-V_0} \, \der t}
\,.
\end{equation}
We shall be particularly interested in the limit $\lambda \to + \infty$
and consistently consider the \emph{principal branch} of the square root.
More generally, we always restrict to 
\begin{equation}\label{contour}
  \lambda \in \Com\setminus (-\infty,0)
  \,.
\end{equation}

For a variable potential~$V$, we still take \eqref{WKB} with $V_0$ replaced by~$V$ and with the minus sign (due to assumptions on the signs of $\Im V$, see \eqref{asm.eq:ImV}) as a basic ansatz to get the approximate solutions~\eqref{objective.intro}. Nonetheless, usually more terms will be needed for unbounded potentials or when~$V$ is sufficiently regular 
and more information on the decay rates in \eqref{objective.intro} 
are sought. 
In general, we therefore take
\begin{equation}\label{g.def}
g(x) := \exp \left(- \sum_{k=-1}^{n-1} \lambda^{-\frac k2} \psi_k(x) \right),
\end{equation}
where functions $\psi_k$ are to be determined. Not surprisingly, $\psi_{-1}$ will turn out to be given by
$\psi_{-1}(x) := i \lambda^{-1/2}\int_0^x \sqrt{\lambda -V(t)} \, \der t$. As we will show in examples  in Section~\ref{subsec:Ex.1}, 
most of interesting potentials 
can be treated already with the expansion~\eqref{g.def} up to $n=2$.

\subsection{The cut-off}
To obtain admissible pseudomodes, 
it is important to employ a $\lambda$-dependent cut-off 
of the JWKB ansatz~\eqref{g.def}.
To this aim, we consider a function $\xi:\Real\to\Real$ satisfying
the following  properties: 
\begin{equation}\label{xi.def}
\begin{aligned}
& \xi \in C_0^\infty(\Real), \quad 0 \leq \xi \leq 1, 
\\
&\forall x \in (-\delta_- +\Delta_-,\delta_+ -\Delta_+), \quad \xi(x) =1,  
\\
\qquad 
& \forall x \notin (-\delta_-,\delta_+), \quad \xi(x)=0; 
\end{aligned}
\end{equation}
the $\lambda$-dependent positive
numbers $\delta_\pm=\delta_\pm(\lambda)$ 
and $\Delta_\pm=\Delta_\pm(\lambda) <\delta_\pm$ 
will be determined later. Notice that $\xi$ can be selected in such a way that
\begin{equation}\label{xi.der}
\|\xi^{(j)}\|_{L^\infty(\Real_\pm)} \ls \Delta_\pm^{-j}, \quad j=1,2 .
\end{equation}
To simplify notations, we also define intervals
\begin{equation}\label{J.def}
\begin{aligned}
\cJ&:=
(-\delta_-,\delta_+),& &\cJ_\pm :=\{x \in \Real_\pm \, : \, |x| < \delta_\pm \},
\\
\cJ'&:=
(-\delta_- + \Delta_-,\delta_+ -\Delta_+), && \cJ_\pm':=\{x \in \Real_\pm \, : \, |x| < \delta_\pm-\Delta_\pm \}.
\end{aligned}
\end{equation}
Our ansatz for a general potential~$V$ then reads 
\begin{equation}\label{Ansatz}
f := \xi \,  g ,
\end{equation}
where~$g$ is defined in~\eqref{g.def} and the index $n \in \Nat_0$
will be chosen according to the smoothness of $V$.

\subsection{The strategy}
Let us informally describe the strategy. 
Recalling~\eqref{g.def}, we have
\begin{equation}\label{triangle}
\begin{aligned}
-f'' + (V-\lambda)f 
&= -(\xi \, g)'' + (V-\lambda)  \xi \, g
\\
&= -\xi'' g - 2 \xi' g' 
+ \xi [-g'' + (V-\lambda) g].
\end{aligned}
\end{equation}
When $n=0$, the appearing terms read
\begin{align}
g' 
= - i \sqrt{\lambda-V} g,
\qquad 
-g'' + (V-\lambda) g
= \frac{- iV'}{\sqrt{\lambda-V}} \, g
\,,
\end{align} 
which already suggests what needs to be done. First, $V$ must be sufficiently regular so that $f \in \Dom(H_V)$; in fact, the more terms in \eqref{g.def} are taken, the more regular $V$ is needed. Second, the functions $\psi_k$ in \eqref{g.def} and the cut-off $\xi$ must be selected in such a way that 
the $L^2$-norm of the third term on the second line of \eqref{triangle} 
is as small as possible when $\lambda$ is large. 
Third, the assumption on the sign of $\Im V$, 
see~\eqref{asm.eq:ImV}, implies that $|g|$ decays exponentially, see Lemma~\ref{lem:g.est}, and so the terms with $\xi'$ and $\xi''$ are expected to be small; nevertheless, an appropriate restriction of $\delta_\pm$, $\Delta_\pm$ must be given.

Since our goal is to deal with potentials of low regularity, the construction consists of more steps. 
First we deal with sufficiently regular potentials $V$, later we add a singular term $W$ and follow various possible strategies how to treat it, see Section~\ref{sec:low.reg}. 

\subsection{The expansion}

For~$g$ given in~\eqref{g.def}, we have 
\begin{equation}\label{H.g.reg}
\begin{aligned}
-g'' + (V-\lambda)g &= 
\left( 
\sum_{k=-1}^{n-1} \lambda^{-\frac k2} \psi_k'' 
\right) g - 
\left( \sum_{k=-1}^{n-1} \lambda^{-\frac k2} \psi_k' 
\right)^2 g + (V - \lambda) g
\\
&=: \left( \sum_{k=-2}^{2(n-1)} \lambda^{-\frac{k}{2}} \phi_{k+1} \right) g,
\qquad n \in \Nat.
\end{aligned}  
\end{equation}
Here the functions~$\phi_k$ with $k \in [[-1,2n-1]] $ are naturally defined 
after grouping together the terms with the same power of~$\lambda$
on the right hand side of the first line in~\eqref{H.g.reg},
with the exception of~$V$ which we include in the leading order term:
\begin{equation}\label{scheme}
\begin{aligned}
&(k=-2) 
&\lambda^1: 
&  &- (\psi_{-1}')^2 + \frac{V - \lambda}{\lambda}  &=: \phi_{-1}, 
\\
&(k=-1) \ 
&\lambda^\frac12: &   &\psi_{-1}'' - 2\psi_{-1}' \psi_0' &=: \phi_0, 
\\ 
&(k=0) \ 
&\lambda^0: &  & \psi_0'' - 2\psi_{-1}' \psi_1' - (\psi_0')^2 &=: \phi_1,
\\
&(k=1) \ 
&\lambda^{-\frac 12}: & &   \psi_1'' - 2\psi_{-1}' \psi_2' - 2\psi_{0}' \psi_1' &= : \phi_2,
\\
&\dots
\end{aligned}
\end{equation}
For $-1 \leq k \leq 2(n-1)$, the formulae can be written concisely as  
\begin{equation}\label{binomial}
\psi_k'' - \sum_{\alpha+\beta=k} \psi_\alpha' \psi_\beta' = \phi_{k+1}
\,,
\end{equation}
with the convention that $\psi_\alpha=0$ 
whenever $\alpha \geq n$ or $\alpha \leq -2$.

For the given $n \in \Nat$, 
we have $n+1$ functions $\psi_{-1},\dots,\psi_{n-1}$ 
and $2n+1$ functions $\phi_{-1},\dots,\phi_{2n-1}$. 
The strategy is to require that the first $n+1$ functions
$\phi_{-1},\dots,\phi_{n-1}$
are equal to zero, which determines all available $\psi_k$.
Using~\eqref{binomial},
this leads to a system of $n+1$ first-order differential equations
that the functions $\psi_{-1},\dots,\psi_{n-1}$ must satisfy:
\begin{equation}\label{ODE}
\begin{aligned}
\psi_{-1}' 
&=  i \lambda^{-\frac 12} (\lambda - V)^\frac12 
\,,
\\
\psi_{k+1}' 
&= \frac{1}{2\psi_{-1}'}
\Bigg(
\psi_k'' - \sum_{\stackrel[\alpha,\beta \not =-1]{}{\alpha+\beta=k}}
\psi_\alpha' \psi_\beta' 
\Bigg)
\,, \qquad
k \in [[-1,n-2]] 
\,,
\end{aligned}
\end{equation}
with the convention as above that $\psi_\alpha=0$ 
whenever $\alpha \geq n$ or $\alpha \leq -2$.
Here and in the sequel~$\lambda$ is, in addition to~\eqref{contour},
assumed to be such that $\lambda-V(x) \in \Com\setminus(-\infty,0)$
for all $x \in \Real$. 
Recall that the principal branch of the square root is considered in this paper.

Notice that we were free to choose the sign 
in the definition of~$\psi_{-1}'$ to make $\phi_{-1}=0$, see~\eqref{scheme}.
Our choice made in~\eqref{ODE} will be consistently
followed in this paper.

Finally, with this choice of functions $\psi_k$ we get 
\begin{equation}\label{rem.phi}
-g'' + (V-\lambda)g
= 
\left(
\sum_{k=n-1}^{2(n-1)} \lambda^{-\frac{k}{2}} \phi_{k+1}
\right) g =:r_n \, g, \qquad n \in \Nat.
\end{equation}

The essential point for estimating the resulting term is the understanding of the structure of functions $\psi_k'$ and remainders $r_n$, which is the content of the following lemmata. The proof is based on a straightforward but rather lengthy induction argument, see Appendix.

\begin{Lemma}\label{lem:str}
Let $n \in \Nat_0$, $V \in W^{n+1,2}_{\rm loc}(\Real)$ and functions $\{\psi'_k\}_{k\in[[-1,n-1]]}$ be determined by \eqref{ODE}.
Then
\begin{equation}\label{psi.k.m}
\psi_k^{(m)} = \frac{\lambda^\frac k2}{(\lambda-V)^\frac k2} \sum_{j=0}^{k+m} \frac{T_j^{k+m,k+m+1-j}}{(\lambda-V)^j},  \quad m\in [[1,n+1-k]],
\end{equation}
where (with some $c_\alpha \in \Com$)
\begin{equation}\label{Tjr.def}
\begin{aligned}
T_j^{r,s} & := 
\sum_{\alpha \in \cI_j^{r,s}} c_\alpha (V^{(1)})^{\alpha_1} (V^{(2)})^{\alpha_2} \dots (V^{(s)})^{\alpha_s},
\\
\cI_j^{r,s} & := 
\left\{ \alpha \in \Nat_0^s \, : \, \sum_{i=1}^s i \alpha_i = r \ \&  \ \sum_{i=1}^s  \alpha_i = j \right\}. 
\end{aligned}
\end{equation}
Moreover, if $r \geq 1$, then $\cI_0^{r,r+1} = \emptyset$.
\end{Lemma}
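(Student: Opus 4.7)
The plan is to prove both claims by induction. The empty-index assertion is immediate: if $\alpha \in \cI_0^{r,r+1}$, then $\sum_{i=1}^{r+1} \alpha_i = 0$ forces $\alpha = 0$, so $\sum_{i=1}^{r+1} i\alpha_i = 0$, contradicting $r \geq 1$. Hence $\cI_0^{r,r+1} = \emptyset$. For the main formula \eqref{psi.k.m}, I first record three book-keeping rules for the classes $T_j^{r,s}$ from \eqref{Tjr.def}, all of which follow at once from Leibniz and the definition of $\cI_j^{r,s}$: a \emph{derivation rule}, $\partial_x T_j^{r,s} \subset T_j^{r+1,s+1}$; a \emph{$V'$-multiplication rule}, $V' \cdot T_j^{r,s} \subset T_{j+1}^{r+1,\max(s,1)}$; and a \emph{product rule}, $T_{j_1}^{r_1,s_1} T_{j_2}^{r_2,s_2} \subset T_{j_1+j_2}^{r_1+r_2,\max(s_1,s_2)}$. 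I will also use the trivial inclusion $T_j^{r,s} \subset T_j^{r,s'}$ for $s' \geq s$.

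With these in hand, I set up a double induction: outer on $k \in [[-1,n-1]]$ and, within each outer step, inner on $m \in [[1,n+1-k]]$. The base $k=-1$, $m=1$ matches the explicit formula $\psi_{-1}' = i\lambda^{-1/2}(\lambda-V)^{1/2}$ from \eqref{ODE}, since the only admissible index is $\alpha=0 \in \cI_0^{0,1}$. For fixed $k$, the inner step $m \mapsto m+1$ is a direct differentiation of $\psi_k^{(m)} = \lambda^{k/2}\sum_j T_j^{k+m,k+m+1-j}(\lambda-V)^{-k/2-j}$: the derivation rule produces summands $T_j^{k+m+1,k+m+2-j}(\lambda-V)^{-k/2-j}$, while the $V'$-rule combined with $\partial_x(\lambda-V)^{-k/2-j} = (k/2+j)V'(\lambda-V)^{-k/2-j-1}$ produces summands $T_{j+1}^{k+m+1,k+m+1-j}(\lambda-V)^{-k/2-j-1}$. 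After reindexing $j' = j$ or $j' = j+1$ these exactly fill the template for $\psi_k^{(m+1)}$.

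The outer step $k \mapsto k+1$ uses the recursion \eqref{ODE}. By the inner induction at $k$ with $m=2$, $\psi_k''$ has the claimed form, and by the outer induction the same is true of each $\psi_\alpha'$ with $0 \leq \alpha \leq k$. Every product $\psi_\alpha' \psi_\beta'$ with $\alpha+\beta=k$ therefore carries the common prefactor $\lambda^{(\alpha+\beta)/2}/(\lambda-V)^{(\alpha+\beta)/2} = \lambda^{k/2}/(\lambda-V)^{k/2}$, and the product rule puts $T_{j_1}^{\alpha+1,\alpha+2-j_1} T_{j_2}^{\beta+1,\beta+2-j_2}$ inside $T_{j_1+j_2}^{k+2,\max(\alpha+2-j_1,\beta+2-j_2)}$; the numerical inequalities $\max(\alpha+2-j_1,\beta+2-j_2) \leq k+3-(j_1+j_2)$ reduce to $j_2 \leq \beta+1$ and $j_1 \leq \alpha+1$, which are built into the inductive form of $\psi_\alpha'$ and $\psi_\beta'$. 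Multiplying by $1/(2\psi_{-1}') = -i\lambda^{1/2}/(2(\lambda-V)^{1/2})$ converts the prefactor to $\lambda^{(k+1)/2}/(\lambda-V)^{(k+1)/2}$, yielding \eqref{psi.k.m} for $\psi_{k+1}'$; the higher derivatives $\psi_{k+1}^{(m)}$ with $m \geq 2$ then follow from the inner induction restarted at the new level.

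The main obstacle is not the logical structure of the induction, which runs mechanically, but the coordinated tracking of the three indices $(j,r,s)$ of $T_j^{r,s}$ through both differentiation and multiplication — in particular checking that the bound on the highest-order derivative index $s$ survives each operation and is compatible with the regularity $V \in W^{n+1,2}_{\rm loc}(\Real)$ assumed in the hypothesis. Once the three algebraic rules above are in place, everything else amounts to careful but routine bookkeeping.
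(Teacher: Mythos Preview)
Your argument is correct and rests on the same three algebraic rules for the classes $T_j^{r,s}$ that the paper isolates (its Lemma~\ref{lem:T.op}). The organisation differs slightly, however. The paper runs a single induction on~$k$ and, at each outer step, obtains \emph{all} derivatives $\psi_{k+1}^{(m)}$ simultaneously by applying the Leibniz rule to the product $\tfrac{1}{2\psi_{-1}'}\cdot(\psi_k'' - \sum \psi_\alpha'\psi_\beta')$; for this it needs, and proves separately (its Lemma~\ref{lem:psi-1'.ind}), the structure of $\big(1/\psi_{-1}'\big)^{(m)}$. Your double induction sidesteps that auxiliary lemma entirely: the inner step $m\mapsto m+1$ is a direct differentiation of the already-established template and never requires differentiating $1/\psi_{-1}'$, since the factor $1/(2\psi_{-1}')$ enters only once, undifferentiated, at the outer step. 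This is a cleaner packaging; the trade-off is that the paper's route yields the formula for $\big(1/\psi_{-1}'\big)^{(m)}$ as a by-product, which is not needed elsewhere but makes the Leibniz computation in the main proof more transparent.
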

\begin{Lemma}\label{lem:r_n.est}
Let $n \in \Nat_0$, $V \in W^{n+1,2}_{\rm loc}(\Real)$ and functions $\{\psi'_k\}_{k\in[[-1,n-1]]}$ be determined by \eqref{ODE}, $\{\phi_k\}_{k \in [[-1,2n-1]]}$ be as in \eqref{binomial} and $r_n$ as in \eqref{rem.phi}.
Then
\begin{equation}\label{rn.est}
|r_n| \ls 
\frac{|V^{(n+1)}|}{|\lambda-V|^\frac{n+1}{2}}
+
\sum_{k=0}^{n-1} \frac{1}{|\lambda-V|^\frac{n-1+k}2} \sum_{l=2}^{n+1+k} \frac{|T_l^{n+1+k,n}|}{|\lambda-V|^l},
\end{equation}
where $T_j^{r,s}$ are as in \eqref{Tjr.def}.
\end{Lemma}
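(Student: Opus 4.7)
The plan is to apply Lemma~\ref{lem:str} termwise to the expansion $r_n=\sum_{k=n-1}^{2(n-1)}\lambda^{-k/2}\phi_{k+1}$ given by \eqref{rem.phi}. First I unpack $\phi_{k+1}$: using \eqref{binomial} with the convention $\psi_j\equiv 0$ for $j\leq -2$ or $j\geq n$, one has $\phi_{k+1}=\psi_k''-\sum_{\alpha+\beta=k}\psi_\alpha'\psi_\beta'$. For $k\geq n$ the second-derivative term vanishes, and the constraint $\alpha,\beta\in[[-1,n-1]]$ combined with $\alpha+\beta=k$ forces $\alpha,\beta\geq k-n+1\geq 1$. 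For $k=n-1$, on the other hand, $\psi_{n-1}''$ is present and the admissible product indices satisfy $\alpha,\beta\in[[0,n-1]]$ (since $\alpha=-1$ would force $\beta=n$, which is excluded). My goal is to show that the products-of-$\psi'$ part produces the double sum in \eqref{rn.est}, while $\psi_{n-1}''$ contributes the isolated $|V^{(n+1)}|$ term together with a small leftover absorbed into the $k=0$ slice of the double sum.

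For the sum-of-products contribution, I plug \eqref{psi.k.m} with $m=1$ into each factor. Since $\alpha,\beta\geq 0$, the last assertion of Lemma~\ref{lem:str} yields $T_0^{\alpha+1,\alpha+2}=0$, so each expansion of $\psi_\alpha'$ effectively starts at $j=1$. The key point is the elementary product rule $|T_{j_1}^{r_1,s_1}T_{j_2}^{r_2,s_2}|\lesssim |T_{j_1+j_2}^{r_1+r_2,\max(s_1,s_2)}|$, which follows directly from \eqref{Tjr.def}: multiplying two monomials $\prod_i(V^{(i)})^{\alpha_i}$ and $\prod_i(V^{(i)})^{\beta_i}$ adds exponents, and the constraint sums $\sum\alpha_i$, $\sum i\alpha_i$ add accordingly. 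Applied here with $r_1+r_2=k+2$ and, since $\alpha,\beta\leq n-1$ and $j_1,j_2\geq 1$, with $\max(\alpha+2-j_1,\beta+2-j_2)\leq n$, the surviving monomials lie in $T_l^{k+2,n}$ for $l=j_1+j_2\in[[2,k+2]]$. Summing over the finitely many admissible pairs $(\alpha,\beta)$ and multiplying by $|\lambda|^{-k/2}$ yields $|\lambda^{-k/2}\phi_{k+1}|\lesssim |\lambda-V|^{-k/2}\sum_{l=2}^{k+2}|\lambda-V|^{-l}|T_l^{k+2,n}|$; reindexing via $k\mapsto n-1+k$ then matches exactly the double sum in \eqref{rn.est}.

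It remains to analyse $\psi_{n-1}''$. Lemma~\ref{lem:str} with $m=2$ gives $\psi_{n-1}''=\frac{\lambda^{(n-1)/2}}{(\lambda-V)^{(n-1)/2}}\sum_{j=0}^{n+1}(\lambda-V)^{-j}T_j^{n+1,n+2-j}$; the $j=0$ term vanishes by the last statement of the lemma. Inspecting $\cI_1^{n+1,n+1}$ in \eqref{Tjr.def}, the only multi-index with $\sum\alpha_i=1$ and $\sum i\alpha_i=n+1$ has $\alpha_{n+1}=1$ and all other entries zero, hence $T_1^{n+1,n+1}=cV^{(n+1)}$; after multiplication by $\lambda^{-(n-1)/2}$ this contributes the isolated leading term $|V^{(n+1)}|/|\lambda-V|^{(n+1)/2}$. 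For $j\geq 2$ one has $n+2-j\leq n$, so these monomials lie in $T_j^{n+1,n}$ and get absorbed into the $k=0$ slice of the double sum already obtained. The main obstacle is neither analytic nor deep but purely combinatorial: one must verify the product rule for the $T_j^{r,s}$'s and carefully track the three indices through every multiplication and re-indexing; given Lemma~\ref{lem:str}, the remainder is a finite collection of triangle-inequality estimates.
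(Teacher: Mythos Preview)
Your proof is correct and follows essentially the same approach as the paper's: both split $r_n$ into the single second-derivative contribution $\lambda^{-(n-1)/2}\psi_{n-1}''$ and the bilinear products $\psi_\alpha'\psi_\beta'$, apply Lemma~\ref{lem:str} to each piece, and then use the product rule $T_{j_1}^{r_1,s_1}T_{j_2}^{r_2,s_2}\subset T_{j_1+j_2}^{r_1+r_2,\max(s_1,s_2)}$ (which the paper records separately as Lemma~\ref{lem:T.op}, while you re-derive it inline) to land in the double sum; the isolation of the $j=1$ term of $\psi_{n-1}''$ as $cV^{(n+1)}$ and the absorption of the $j\geq 2$ terms into the $k=0$ slice is exactly what the paper does as well.
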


As an illustration for the expansions above with $n=0,1,2$ 
we list functions $\psi_k'$: 
\begin{equation}\label{psi'.list}
\begin{aligned}
(n=0) &\qquad&
\psi_{-1}' &= i \frac{(\lambda - V)^\frac12}{\lambda^\frac 12}, 
\\
(n=1) &&
\psi_0' & = -\frac14 \frac{V'}{\lambda - V}, 
\\
(n=2) &&
\psi_1' &= \frac{i}{2} 
\frac{\lambda^\frac 12}{(\lambda - V)^\frac12} 
\left( \frac14 \frac{V''}{\lambda-V} 
+ \frac{5}{16} \frac{V'^2}{(\lambda-V)^2}  \right),
\end{aligned}
\end{equation}
together with 
the remainders $r_n$ on the right of~\eqref{rem.phi}: 
\begin{equation}\label{rn.est.list}
\begin{aligned}
r_0 & = -\frac{i}{2} \frac{V'}{(\lambda-V)^\frac12},
\\
r_1 & = -\frac{1}{4} \frac{V''}{\lambda-V} 
- \frac{5}{16} \frac{V'^2}{(\lambda-V)^2},
\\
r_2 & = \frac{i}{8} \frac 1{(\lambda-V)^\frac12}  
\left( \frac{V'''}{(\lambda-V)} 
+ \frac{9}{2} \frac{V'V''}{(\lambda-V)^2} 
+ \frac{15}{4} \frac{V'^3}{(\lambda-V)^3}\right) 
\\ & \quad + \frac{1}{64} \frac 1{(\lambda-V)}  \left( 
\frac{V''^2}{(\lambda-V)^2} 
+ \frac{5}{2} \frac{V'^2 V''}{(\lambda-V)^3} 
+ \frac{25}{16} \frac{V'^4}{(\lambda-V)^4}
\right).
\end{aligned}
\end{equation}

\section{Pseudomodes for $\lambda \to + \infty$}
\label{sec:pseudo.real}

In this section, unless otherwise stated,
we always assume that~$\lambda$ is \emph{positive}
and typically very large.

\subsection{Admissible class of potentials}
We proceed under the following hypothesis about the (possibly unbounded) potential~$V$.

\begin{Assumption}\label{asm:basic}
Let $N \in \Nat$  and let $V \in W^{N,\infty}_{\rm loc}(\Real)$ satisfy the following conditions:
\begin{enumerate}[a)]
\item $\Im V$ has a different asymptotic behaviour at $\pm \infty$:
\begin{equation}\label{asm.eq:ImV}
\limsup_{x\to -\infty} \Im V(x) <0 , \qquad  \liminf_{x\to +\infty} \Im V(x) >0;
\end{equation}
\item derivatives of $V$ are controlled by $V$: $\exists \nu_\pm \in \Real,$  $\forall m \in [[1,N]]$,
\begin{equation}\label{asm.eq:V'}
\begin{aligned}
|\Im V^{(m)}(x)| &= \BigO \left(|\Im V(x)| \langle x \rangle^{m\nu_\pm} \right), \quad x \to \pm \infty,
\\
|V^{(m)}(x)| &= \BigO \left(|V(x)| \langle x \rangle^{m\nu_\pm} \right), \quad x \to \pm \infty;
\end{aligned}
\end{equation}
\item $\Im V$ is sufficiently large:
\label{asm:basic.c}
\begin{enumerate}[i)]
	\item if $V$ is unbounded at $\pm \infty$, then suppose that: $\exists \eps_1 > 0$, 
\begin{equation}\label{asm.eq:ReV}
\begin{aligned}
\langle x \rangle^{4(\nu_\pm +\eps_1)+2} 
(\langle x \rangle^{4\nu_\pm +2} + |\Re V(x)| )  
= \BigO\left(|\Im V(x)|^2\right), \quad x \to \pm \infty;
\end{aligned}
\end{equation}
\item if $V$ is bounded at $\pm \infty$, 
then suppose that $\nu_\pm<1$, 
where $\nu_\pm$ are the numbers from \eqref{asm.eq:V'}.
\end{enumerate}
\end{enumerate}
\end{Assumption}

\begin{Example}
A model case of $V$ satisfying Assumption~\ref{asm:basic}
is a sufficiently regular polynomial-like potential satisfying
\begin{equation}
V(x) = |x|^\beta + i \sgn x |x|^\gamma, \qquad |x| \geq 1,
\end{equation}
for which $\nu_\pm = -1$, \cf~\eqref{asm.eq:V'}. The conditions \eqref{asm.eq:ImV} and \eqref{asm.eq:ReV} hold if $\gamma \geq 0$ and $\gamma >(\beta-2)/2$, respectively.
More examples can be found in Section~\ref{subsec:Ex.1} and the optimality of the conditions in Assumption~\ref{asm:basic} is discussed around \eqref{MS.cond}.
\end{Example}

Several comments on the assumption are in place. 
First of all, notice that~$V$ is necessarily continuous
due to $V \in W^{1,\infty}_{\rm loc}(\Real)$.

The condition \eqref{asm.eq:ImV} ensures that the operator~\eqref{Hamiltonian.intro} is ``significantly non-self-adjoint''.
More precisely, $H_V$~is not normal as a consequence of~\eqref{asm.eq:ImV},
the normality is equivalent to the condition that $\Im V$ is constant. 
Furthermore, hypothesis~\eqref{asm.eq:ImV} ensures that
the pseudomode~$g$ from \eqref{g.def} is exponentially decaying. The correct sign for the decay can be seen (if  the shape of $g$ is determined mainly by $\psi_{-1}$) by employing \eqref{asm.eq:ImV} and the complex square root formula
\begin{equation}\label{sqr.rt}
\begin{aligned}
\Re \left( \lambda^{\frac 12} \psi_{-1}' \right)
&= 
- \Im(\lambda-V)^\frac{1}{2} 
\\ &=
\frac{1}{ 2^\frac12} \frac{\Im V}
	{\left(\left[(\lambda-\Re V)^2+(\Im V)^2\right]^\frac12+(\lambda-\Re V)\right)^\frac 12} \,, 
\end{aligned}
\end{equation}
valid for all positive~$\lambda$ satisfying in addition  
the requirement $\lambda -V(x) \in \Com\setminus(-\infty,0)$
for all $x \in \Real$.
The remaining two intertwined conditions guarantee 
that the exponential decay of~$g$ is not spoiled by too large $\Re V$ 
or too wild behaviour of the derivatives of~$V$. 

The condition \eqref{asm.eq:V'} restricts the growth and oscillations of $V$, nonetheless, it is still quite flexible as 
\eg~$V(x) := \pm i e^{x^2}$ for $x \to \pm \infty$ is covered. 
Notice that Gronwall's inequality implies that (with some $M>0$)
\begin{equation}\label{Gronw.est}
\forall x \geq 0, \quad |V(x)| \ls
\left \{
\begin{aligned}
& e^{M x^{1+\nu_+}},
& \nu_+ > -1,
\\
&\langle x \rangle^M, & \nu_+  =-1,
\\
& 1, & \nu_+ < -1;
\end{aligned} 
\right.
\end{equation} 
an analogous estimate holds also for $x \leq 0$. 

If~$V$ is bounded, we do not require that the derivatives of~$V$ are bounded
in general, thus \eg~rapidly oscillating potentials are allowed. 
In such cases, the estimate from Gronwall's inequality becomes very crude. 

The condition \eqref{asm.eq:V'} also implies that for  $\nu_+ \geq -1$ and all sufficiently large $x>0$ and every $|\Delta| \leq x^{-\nu_+}/4$, 
we have
\begin{equation}\label{V.x.Delta}
\frac{|\Im V(x+2\Delta)|}{|\Im V(x)|} \approx 1.
\end{equation}
Indeed, for $\nu_+>-1$, 
\begin{equation}
\begin{aligned}
\left|\log \frac{|\Im V(x+2\Delta)|}{|\Im V(x)|}\right| 
&= 
\left|\int_x^{x+2\Delta} \frac{\Im V'(t)}{\Im V(t)} \, \der t \right|
\ls
\left| |x+2\Delta|^{\nu_+ +1}-|x|^{\nu_+ +1} \right|
\\
&\ls 
x^{\nu_+} |\Delta|  + \BigO(|\Delta|^2 x^{\nu_+-1}),
\end{aligned}
\end{equation}
and similarly for $\nu_+=-1$.
The conclusion \eqref{V.x.Delta} is clearly valid also for a bounded $V$ as we require \eqref{asm.eq:ImV}.

\subsection{Localisation of the pseudomode and cut-off}
\label{subsec:cut-off}

To estimate $|g|$ we first show that under Assumption~\ref{asm:basic} 
the function 
$\int_0^x [\lambda^\frac12 \psi_{-1}'(t) + \psi'_0(t)] \, \der t$
in the expansion~\eqref{g.def}
dominates over the other terms with $k\geq 1$. 
Thus estimates simplify significantly even for
many terms in~\eqref{g.def}. 
Already at this step, it is important to employ a suitable cut-off. Namely, for every $\lambda >0$ we define:
\begin{equation}\label{delta.def}
\begin{aligned}
\delta_\pm &:= 
\begin{cases}
\displaystyle
\inf \left\{ \delta \geq 0 \, : \, 
\frac{|\Im V(\pm \delta)|^2}{\langle \delta \rangle^{4 \nu_\pm + 2 \eps_1+2}} = \lambda    \right\} & \text{if $V$ is unbounded at $\pm \infty$},
\\[4mm]
\displaystyle
\lambda^{\frac{1+\eps_2}2} \quad \text{with } \quad 0<\eps_2< 1-\nu_\pm & \text{if $V$ is bounded at $\pm \infty$},
\end{cases}
\\
\Delta_\pm &:= \frac 14
\begin{cases}
\displaystyle
\delta_\pm^{-\nu_\pm}  & \text{if $V$ is unbounded at $\pm \infty$} ,
\\[1mm]
\displaystyle
\delta_\pm  & \text{if $V$ is bounded at $\pm \infty$}.
\end{cases}
 \end{aligned}
\end{equation}
\begin{Remark}[Properties of $\delta_\pm$ and $\Delta_\pm$]
	\label{rem:delta}
The following hold.
\begin{enumerate}[i)]
\item 
The infimum can be infinite 
($\inf \emptyset = + \infty$), however, 
for all sufficiently large $\lambda>0$, 
the numbers $\delta_\pm$ are finite and 
\begin{equation}\label{delta.lim}
\lim_{\lambda\to + \infty} \delta_\pm = + \infty ;
\end{equation}
	\item $\Delta_\pm$ are so small that the values of $\Im V(x)$ are comparable if $|x - \delta_\pm| \leq 2\Delta_\pm$;
	\item for all sufficiently large  $\lambda>0$,
	\begin{equation}\label{lam.V.comp}
	\forall x \in \cJ, \quad |\lambda-V(x)| \approx \lambda.
	\end{equation}
\end{enumerate}
\end{Remark}
\begin{proof} 
All the three properties are obvious for bounded~$V$. 
We further assume that $V$ is unbounded at $+\infty$ 
and prove the claims; 
the case of $V$ unbounded at $-\infty$ is analogous.	

i) It follows from the assumption~\eqref{asm.eq:ReV} that for all sufficiently large $\delta>0$
\begin{equation}
\frac{|\Im V(\delta)|^2}{\langle \delta \rangle^{4 \nu_+ + 2 \eps_1+2}} 
\gs
\langle \delta \rangle ^{2 \eps_1},
\end{equation}
thus for all 
\begin{equation}
\lambda > \min_{\delta \geq 0 } 
\frac{|\Im V(\delta)|^2}{\langle \delta \rangle^{4 \nu_+ + 2 \eps_1+2}}  
\end{equation}
the number~$\delta_+$ is finite. 
Since $\Im V$ is continuous, \eqref{delta.lim} follows.

ii) See \eqref{V.x.Delta}.

iii) From \eqref{asm.eq:ReV}, 
we obtain that for all $x>x_0$ with $x_0$ sufficiently large,  
\begin{equation}
|\Im V(x)| 
\ls  \frac{|\Im V(x)|^2}{\langle x\rangle ^{4 \nu_+ + 2 \eps_1+2}},
\qquad 
|\Re V(x)| 
\ls  \frac{|\Im V(x)|^2}{\langle x\rangle^{4 \nu_+ + 4 \eps_1+2}};
\end{equation}
thus we may assume that $x_0$ is chosen so large that for all $x>x_0$, we have 
\begin{equation}
|\Re V(x)| 
\leq  \frac 12 \frac{|\Im V(x)|^2}{\langle x\rangle^{4 \nu_+ + 2 \eps_1+2}}.
\end{equation}
Thus, using \eqref{delta.def}, already proved i) and the continuity of $V$, we can select sufficiently large $\lambda_0>0$ such that for all $\lambda>\lambda_0$ and all $x \in [0,\delta_+]$, we have
\begin{equation}\label{lambdapul}
|\Im V(x)| 
\ls  \lambda,
\qquad 
|\Re V(x)| 
\leq  \frac \lambda 2.
\end{equation}
Hence \eqref{lam.V.comp} follows.
\end{proof}
\begin{Remark}[More on the assumption on $\Re V$]
\label{rem:ReV.asm}
The restriction on $\Re V$ made in~\eqref{asm.eq:ReV}
arises in a very natural way. 
As an illustration, let us consider the potential
$V (x): = |x|^\beta + i \sgn (x) \, |x|^\gamma$
with positive powers~$\beta,\gamma$.
In this case we can take $\nu_\pm:=-1$ 
to satisfy hypothesis~\eqref{asm.eq:V'}
and assumption~\eqref{asm.eq:ReV} imposes
the relationship $\beta-2+4\eps_1 \leq 2\gamma$.
Choosing on the contrary $\beta > 2\gamma +2$
so that~\eqref{asm.eq:ReV} is violated,
the dominant part in the expansion~\eqref{g.def}, 
which is given by~$\Re \left( \lambda^{\frac 12} \psi_{-1}' \right)$ 
as we show later, becomes (uniformly in $\lambda$) 
\emph{integrable} in $x\in \Real$. 
Indeed, with the substitution 
$t = \lambda^{1/\beta}s$ and by  straightforward estimates,
\begin{equation}\label{ReV.ex}
\int_0^{\delta_+} 
\left|\Re \left( \lambda^{\frac 12} \psi_{-1}'(t) \right)\right| \, \der t
\ls 
\lambda^{\frac{2+2\gamma - \beta}{2\beta}}
\int_0^\infty
\frac{s^\gamma}{|1-s^\beta|^\frac 12} \, \der s 
= o(1), \quad \lambda \to +\infty.
\end{equation}
Moreover, notice that by taking a larger $\delta$, so that it is possible that $\Re V(x) \geq \lambda$, 
problems appear in the control of the decay of $|r_n|$
for which the estimate $|\Re V - \lambda| \gs \lambda$ is essential.	
\end{Remark}

Using the properties of~$\delta_\pm$ and~$\Delta_\pm$,
we obtain the following estimates.
\begin{Lemma}\label{lem:exp.bound}
Let Assumption~\ref{asm:basic} hold, 
let $0 \leq n \leq N$ and $\{\psi'_k\}_{k\in[[-1,n-1]]}$ be determined by \eqref{ODE} 
and let $\delta_\pm$ be as in \eqref{delta.def}. 
Then for all sufficiently large $\lambda >0$  
\begin{equation}
\label{Re.psi-1}
\forall x \in \cJ, \qquad \lambda^\frac 12 \Re\psi_{-1}'(x) 
\approx \lambda^{-\frac12} \Im V(x),
\end{equation}
and 
\begin{equation}\label{|psi_k|}
\forall k \in [[0,n-1]], \quad \forall x \in \cJ_\pm, \qquad
\lambda^{-\frac k2} |\psi_k'(x)| 
\ls 
\frac{|V(x)|\langle x \rangle^{(k+1)\nu_\pm}}{\lambda^{\frac k2+1}} .
\end{equation}
\end{Lemma}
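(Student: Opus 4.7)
The plan is to handle the two estimates \eqref{Re.psi-1} and \eqref{|psi_k|} separately, both leveraging the cut-off properties established in Remark~\ref{rem:delta}, in particular $|\lambda-V(x)| \approx \lambda$ on $\cJ$ and the more refined bounds $|\Re V| \leq \lambda/2$ and $|\Im V| \lesssim \lambda$ used there.

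For \eqref{Re.psi-1}, I would start from the first line of \eqref{ODE}, multiplied by $\lambda^{1/2}$, to get $\lambda^{1/2}\psi_{-1}' = i(\lambda-V)^{1/2}$. Taking the real part and using the explicit formula \eqref{sqr.rt}, the quantity in question equals $\Im V$ divided by $\sqrt{2}\bigl([(\lambda-\Re V)^2+(\Im V)^2]^{1/2}+(\lambda-\Re V)\bigr)^{1/2}$. The task reduces to showing that this denominator is comparable to $\sqrt{\lambda}$ on $\cJ$. The proof of Remark~\ref{rem:delta}iii) already provides $|\Re V(x)| \leq \lambda/2$, hence $\lambda-\Re V \in [\lambda/2,3\lambda/2]$; combined with $|\Im V(x)| \lesssim \lambda$ on $\cJ$, this gives $[(\lambda-\Re V)^2+(\Im V)^2]^{1/2} \approx \lambda$, so the denominator is indeed $\approx \sqrt{\lambda}$. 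The sign of $\Im V$ is preserved through the quotient, so $\approx$ holds in the signed sense as well.

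For \eqref{|psi_k|}, I would apply Lemma~\ref{lem:str} with $m=1$ to write
\begin{equation*}
\psi_k' = \frac{\lambda^{k/2}}{(\lambda-V)^{k/2}} \sum_{j=0}^{k+1} \frac{T_j^{k+1,\,k+2-j}}{(\lambda-V)^j}.
\end{equation*}
Since $k+1 \geq 1$ for every $k \in [[0,n-1]]$, the final clause of Lemma~\ref{lem:str} asserts $\cI_0^{k+1,k+2}=\emptyset$, so the $j=0$ term vanishes and the summation effectively starts at $j=1$. Next, the assumption \eqref{asm.eq:V'} yields $|V^{(i)}(x)| \lesssim |V(x)|\langle x\rangle^{i\nu_\pm}$, and applying it termwise to the definition \eqref{Tjr.def} with the constraints $\sum i\alpha_i = k+1$ and $\sum \alpha_i = j$ gives $|T_j^{k+1,k+2-j}(x)| \lesssim |V(x)|^j\langle x\rangle^{(k+1)\nu_\pm}$.

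It then remains to combine these ingredients using $|\lambda-V| \approx \lambda$ and $|V|/\lambda \lesssim 1$ on $\cJ$ (both contained in Remark~\ref{rem:delta}iii)). This reduces $\lambda^{-k/2}|\psi_k'|$ to $\langle x\rangle^{(k+1)\nu_\pm}\lambda^{-k/2}\sum_{j=1}^{k+1}(|V|/\lambda)^j$, and since $|V|/\lambda$ is uniformly bounded and the number of summands is at most $N$, the geometric-type sum is dominated by its first term $|V|/\lambda$, delivering \eqref{|psi_k|}. The only genuinely delicate point here is combinatorial: one must carefully follow Lemma~\ref{lem:str}'s structural description to see that the powers of $|V|$, $\lambda$ and $\langle x\rangle$ match up as stated, and in particular that suppressing the $j=0$ contribution is essential — without it the bound would produce a spurious $1$ rather than the gain $|V|/\lambda$.
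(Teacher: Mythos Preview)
Your proposal is correct and follows essentially the same approach as the paper's proof: \eqref{Re.psi-1} via the square-root formula \eqref{sqr.rt} together with the bounds from Remark~\ref{rem:delta}, and \eqref{|psi_k|} via Lemma~\ref{lem:str} combined with \eqref{asm.eq:V'} and $|\lambda-V|\approx\lambda$. One small addition worth making explicit: since \eqref{asm.eq:V'} is only an asymptotic condition as $x\to\pm\infty$, the paper separately treats $x$ in a fixed bounded interval by appealing directly to the expansion from Lemma~\ref{lem:str} and the continuity of the derivatives of $V$.
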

\begin{proof}
The estimate \eqref{Re.psi-1} follows immediately from \eqref{sqr.rt} 
using~\eqref{lam.V.comp} and~\eqref{lambdapul}.
The rest is based on Lemma~\ref{lem:str} and assumptions \eqref{asm.eq:V'}, \eqref{asm.eq:ReV}. 

For $k \geq 0$, Lemma~\ref{lem:str} yields
\begin{equation}\label{psik'.exp}
\begin{aligned}
\lambda^{-\frac k2} |\psi_k'|
&\leq 
\frac{1}{|\lambda-V|^\frac k2} \sum_{j=1}^{k+1} \frac{|T_j^{k+1,k+2-j}|}{|\lambda-V|^j}
\\
&\ls 
\frac{1}{|\lambda-V|^\frac k2}
\sum_{j=1}^{k+1}
\frac{\sum_{\alpha \in \cI_j^{k+1,k+2-j}}  
|V'|^{\alpha_1} |V''|^{\alpha_2} \dots |V^{(k+2-j)}|^{\alpha_{k+2-j}}}{|\lambda-V|^j}.
\end{aligned}
\end{equation}
Notice that the highest appearing derivative of $V$ is $V^{(n)}$ and that the product of $|V^{(i)}|$ always consists of $j$ factors (counting with powers) since $\sum_{i=1}^{k+2-j} \alpha_i = j$; see \eqref{Tjr.def}. 
Thus all appearing derivatives of $V$ are continuous and the assumption \eqref{asm.eq:V'} with $\sum_{i=1}^{k+2-j} i \alpha_i = k+1$ from \eqref{Tjr.def} yields that for all sufficiently large $x>0$
\begin{equation}\label{psik'.est}
\lambda^{-\frac k2} |\psi_k'(x)|
\ls
\frac{\langle x \rangle^{(k+1)\nu_+}}{|\lambda-V(x)|^\frac k2} 
\sum_{j=1}^{k+1} \frac{|V(x)|^j }{|\lambda-V(x)|^j}
\ls
\frac{|V(x)| \langle x \rangle^{(k+1)\nu_+}}{\lambda^{\frac k2+1}};
\end{equation}
in the last step we have used~\eqref{lam.V.comp}.
For small $x>0$, the estimate \eqref{|psi_k|} follows immediately from \eqref{psik'.exp} and the continuity of the derivatives of $V$. For $x<0$, the estimate is analogous.
\end{proof}

Localising the ansatz~\eqref{g.def} on the interval~$\mathcal{J}$,
the preceding lemma shows that the shape of~$g$ 
is determined mainly by $\psi_{-1}$ and $\psi_0$.
More specifically, given the derivatives~$\psi_k'$ from~\eqref{ODE},
henceforth we choose the primitive functions~$\psi_k$ by fixing the integration
constant by the requirement
\begin{equation}\label{constant.fixing}
  \psi_k(0) := 0 
  \,, \qquad
  k \in [[-1,n-1]]
  \,.
\end{equation}
With this standing convention, 
we have the following two-sided bounds.
\begin{Lemma}\label{lem:g.est}
Let Assumption~\ref{asm:basic} hold, 
$g$ be as in \eqref{g.def} with $\{\psi'_k\}_{k\in[[-1,n-1]]}$, 
$0 \leq n \leq N$, determined by~\eqref{ODE} and let $\delta_\pm$ 
be as in \eqref{delta.def}. 
Then there exist $c_1, c_2 >0$ such that 
for all sufficiently large $\lambda >0$ and all $x \in \cJ$ we have
\begin{equation}\label{|g|.est}
\exp\left(-\frac{c_1}{\lambda^\frac12}  \int_0^{|x|} |\Im V(t)| \, \der t \right)  
\ls |g(x)| \ls  
\exp\left(-\frac{c_2}{\lambda^\frac12}  \int_0^{|x|} |\Im V(t)| \, \der t \right).
\end{equation}
\end{Lemma}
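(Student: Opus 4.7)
The plan is to write $|g(x)| = \exp(-\Phi(x))$ with
\begin{equation*}
\Phi(x) := \sum_{k=-1}^{n-1} \lambda^{-k/2} \Re \psi_k(x)
\end{equation*}
and to show that $\Phi(x)$ is comparable from above and below to $\lambda^{-1/2}\int_0^{|x|}|\Im V(t)|\,\der t$ up to a bounded additive error, uniformly for $x \in \cJ$ and all sufficiently large $\lambda$. Once this two-sided asymptotic is in hand, exponentiating and absorbing the resulting $e^{\pm \BigO(1)}$ multiplicative factors into the symbol $\lesssim$ of \eqref{|g|.est} concludes the proof. Since $\psi_k(0)=0$ by the normalisation \eqref{constant.fixing}, each $\Re\psi_k(x)=\int_0^x \Re\psi_k'(t)\,\der t$, so the analysis can be carried out at the level of derivatives.

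For the dominant $k=-1$ contribution I would rely on \eqref{Re.psi-1} together with the explicit formula \eqref{sqr.rt}: these imply that $\lambda^{1/2}\Re\psi_{-1}'(t)$ has the same sign as $\Im V(t)$ and is comparable in magnitude to $\lambda^{-1/2}|\Im V(t)|$. By hypothesis \eqref{asm.eq:ImV} there exists a $\lambda$-independent $R>0$ beyond which $\pm\Im V(t)>0$ for $\pm t>R$. Treating $x>0$ (the case $x<0$ is symmetric), I would split $\int_0^x=\int_0^R+\int_R^x$; the first integral is a bounded constant because the integrand is continuous on the fixed compact interval, while over $[R,x]$ the integrand has a definite positive sign and obeys
\begin{equation*}
\int_R^x \lambda^{1/2}\Re\psi_{-1}'(t)\,\der t \ \approx\ \lambda^{-1/2}\int_R^x |\Im V(t)|\,\der t.
\end{equation*}
Since $\int_0^R |\Im V(t)|\,\der t$ is itself a fixed constant, this yields $\lambda^{1/2}\Re\psi_{-1}(x)\approx\lambda^{-1/2}\int_0^x |\Im V(t)|\,\der t$ up to a bounded additive error.

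It then remains to absorb the higher-order terms $k\in[[0,n-1]]$. From \eqref{|psi_k|} and $\psi_k(0)=0$,
\begin{equation*}
\lambda^{-k/2}|\Re\psi_k(x)| \leq \int_0^{|x|}\lambda^{-k/2}|\psi_k'(t)|\,\der t \lesssim \frac{1}{\lambda^{k/2+1}}\int_0^{|x|} |V(t)|\,\langle t\rangle^{(k+1)\nu_\pm}\,\der t,
\end{equation*}
and I would combine the growth control \eqref{asm.eq:V'}, the size constraint \eqref{asm.eq:ReV} on $\Re V$, and the definition \eqref{delta.def} of $\delta_\pm$ to show that for $x\in\cJ_\pm$ and $\lambda$ large this quantity is bounded by a constant multiple of $\lambda^{-1/2}\int_0^{|x|}|\Im V(t)|\,\der t$, plus an $\BigO(1)$ error. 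Summing over $k$ and applying the triangle inequality then delivers
\begin{equation*}
c_2\,\lambda^{-1/2}\int_0^{|x|}|\Im V(t)|\,\der t - C \leq \Phi(x) \leq c_1\,\lambda^{-1/2}\int_0^{|x|}|\Im V(t)|\,\der t + C
\end{equation*}
with positive constants $c_2\leq c_1$ and $C$ independent of $\lambda$ and $x$; exponentiating and renaming constants gives \eqref{|g|.est}. The main technical obstacle I anticipate is the uniform absorption of the $k\geq 0$ contributions all the way up to $|x|=\delta_\pm$: there $|V|$ attains its largest admissible size, the naive inequality $|V|\leq|\Re V|+|\Im V|$ is too crude, and one must exploit the precise relation $|\Im V(\delta_\pm)|^2=\lambda\,\langle\delta_\pm\rangle^{4\nu_\pm+2\eps_1+2}$ imposed by \eqref{delta.def} together with \eqref{asm.eq:ReV} in order to close the trade-off.
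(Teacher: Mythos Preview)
Your strategy is correct and matches the paper's proof almost exactly: isolate the dominant $k=-1$ contribution via \eqref{Re.psi-1} and \eqref{sqr.rt}, split the integral at a fixed point $x_0$ beyond which $\Im V$ has a definite sign, and absorb the higher-order terms using \eqref{|psi_k|}. For $k\ge 1$ the paper proceeds just as you outline, showing that the pointwise ratio $\lambda^{-k/2}|\psi_k'|/(\lambda^{1/2}|\Re\psi_{-1}'|)$ is $\lesssim \lambda^{-1/2}$ (or a similar negative power) via \eqref{|psi_k|}, \eqref{asm.eq:ReV} and \eqref{delta.def}.

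The one place the paper differs is precisely your ``main technical obstacle'', the term $k=0$. Instead of bounding $|\psi_0'|$ through \eqref{|psi_k|}, the paper notices that $\psi_0'=-\tfrac14\,V'/(\lambda-V)$ is an exact logarithmic derivative, so
\[
e^{-\psi_0(x)}=\left(\frac{\lambda-V(0)}{\lambda-V(x)}\right)^{1/4},
\]
and by \eqref{lam.V.comp} this prefactor is $\approx 1$ on $\cJ$. Thus $\Re\psi_0(x)=\BigO(1)$ immediately, with no trade-off between $|V|$, $|\Im V|$ and $\lambda^{1/2}$ needed. Your route via \eqref{|psi_k|} for $k=0$ only yields a bounded (not small) pointwise ratio with the $k=-1$ term, which is delicate for the \emph{two-sided} estimate; it can still be pushed through by splitting at a large $T$ where the ratio $|V(t)|\langle t\rangle^{\nu_\pm}/(\lambda^{1/2}|\Im V(t)|)$ becomes small and absorbing the bounded piece $[x_0,T]$ into the $\BigO(1)$ error, but the exact integration of $\psi_0$ is much cleaner and removes the obstacle altogether.
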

\begin{proof}
Notice that the formula \eqref{psi'.list} for $\psi_0'$ is exceptional since it can be easily integrated, hence
\begin{equation}\label{g.formula}
g(x) = \frac{[\lambda-V(0)]^\frac 14}{[\lambda-V(x)]^\frac 14} \,
\exp \left( 
- \sum_{\stackrel[k \neq 0]{}{k = -1}}^{n-1} 
\lambda^{-\frac k2} \int_0^x \psi_k'(t) \, \der t 
\right).
\end{equation}
From \eqref{lam.V.comp} we get
\begin{equation}
\forall x \in \cJ, \quad \left|\frac{\lambda-V(0)}{\lambda-V(x)}\right| \approx 1.
\end{equation}
We continue with estimates for $x>0$, the other case is analogous. For any $x_0>0$ fixed, we have from Lemma~\ref{lem:exp.bound} that
\begin{equation}
\left|\Re \sum_{k=-1}^{n-1} \lambda^{-\frac k2} \int_0^{x_0} \psi_k'(t) \; \der t\right| \ls \lambda^{-\frac 12}.
\end{equation}
The remaining estimate for $x>x_0$ follows from \eqref{|psi_k|}, \eqref{delta.def} and assumption \eqref{asm.eq:ReV}, namely
\begin{equation}
\Re \sum_{\stackrel[k \neq 0]{}{k = -1}}^{n-1} \lambda^{-\frac k2} \int_{x_0}^x \psi_k'(t) \; \der t 
= \int_{x_0}^{x} \lambda^{\frac 12} \Re \psi_{-1}'(t) \, [1 + S(t)] \; \der t ,
\end{equation}
where
\begin{equation}
|S(t)| \ls 
\begin{cases}
\lambda^{-\frac 12}& \text{if } V \text{ is unbounded},
\\
\lambda^{-\frac 12}& \text{if } V \text{ is bounded and } \nu_+ < 0,
\\[1mm]
\lambda^{-\frac{1-(1+\eps_2) \nu_+ }2}& \text{if } V \text{ is bounded and } \nu_+ \geq 0.
\end{cases} 
\end{equation}
Indeed, in the first case, Lemma~\ref{lem:exp.bound}, assumption~\eqref{asm.eq:ReV}, \eqref{lam.V.comp} and \eqref{Re.psi-1}  give
\begin{equation}
\frac{\lambda^\frac k2|\psi_k'(x)|}{\lambda^\frac12 |\Re \psi_{-1}'(x)|} 
\ls 
\frac{\langle x \rangle^{(k+1)\nu_\pm}}{\lambda^{\frac {k-1}2 } |\Im V(x)|} 
\ls
\frac{1}{\langle x \rangle^{\frac{k+1}2(\eps_1+1)} \lambda^{\frac {k+1}4 }};
\end{equation}
the other cases can be verified similarly.

Hence using \eqref{Re.psi-1} we get 
\begin{equation}
\Re \sum_{\stackrel[k \neq 0]{}{k = -1}}^{n-1} 
\lambda^{-\frac k2} \int_{x_0}^x \psi_k'(t) \; \der t 
\approx 
\lambda^{-\frac 12} \int_{x_0}^{x}  \Im V(t) \; \der t.
\end{equation}
Putting all estimates from above together, we obtain \eqref{|g|.est}.
\end{proof}

The following proposition ensures that the terms in~\eqref{triangle}
containing derivatives of the cut-off function $\xi$ are negligible in a sense.
\begin{Proposition}\label{prop:cut}
Let Assumption~\ref{asm:basic} hold, $g$ be as in \eqref{g.def} with $\{\psi'_k\}_{k\in[[-1,n-1]]}$, $0 \leq n \leq N$, determined by \eqref{ODE} and $\xi$ be as in \eqref{xi.def} with $\delta_\pm,$ $\Delta_\pm$ as in \eqref{delta.def}. 
Then
\begin{equation}\label{cut.rates}
\kappa(\lambda) :=
\frac{\|\xi'' g\| + \|\xi' g'\|}{\|\xi g\|} 
= o(1), 
\qquad \lambda \to + \infty.
\end{equation}
More precisely, $\kappa(\lambda) = \kappa_-(\lambda) + \kappa_+(\lambda)$ where (with some $c>0$) 
\begin{equation}\label{kappa.est}
\kappa_\pm(\lambda) = 
\begin{cases}
	\BigO\left(\exp(-c \, \delta_\pm^{\nu_\pm + 1+\eps_1}) \right) & \text{if } V \text{ is unbounded at } \pm \infty,
	\\[2mm]
	\BigO\left(\exp \left(-c \, \lambda^{\frac{\eps_2}{2}} \right) \right)& \text{if } V \text{ is bounded at } \pm \infty. 
\end{cases}
\end{equation}

\end{Proposition}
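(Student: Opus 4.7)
The strategy is to show that the numerator $\|\xi''g\|+\|\xi'g'\|$ is exponentially small in $\delta_\pm$ (or~$\lambda$), while the denominator $\|\xi g\|$ is bounded below by a positive constant, so the ratio $\kappa(\lambda)$ decays at the advertised rate.

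First I would handle the denominator. The normalisation $g(0)=1$ from~\eqref{constant.fixing} and the lower bound in Lemma~\ref{lem:g.est} give $|g|\gs 1$ on any fixed compact interval around the origin. By Remark~\ref{rem:delta}(i), $\delta_\pm\to\infty$, so such an interval lies in $\cJ'$ (where $\xi\equiv 1$) for all sufficiently large $\lambda$, yielding $\|\xi g\|\gs 1$.

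For the numerator, $\xi'$ and $\xi''$ are supported in the transition strips $\cI_\pm:=\{x\in\Real_\pm:\delta_\pm-\Delta_\pm\le|x|\le\delta_\pm\}$, each of length $\Delta_\pm$. Since $|\Im V|\ge 0$, the upper bound in Lemma~\ref{lem:g.est} is monotonically decreasing in $|x|$, so
$$|g(x)|\ls E_\pm:=\exp\!\Bigl(-\tfrac{c_2}{\lambda^{1/2}}\textstyle\int_0^{\delta_\pm-\Delta_\pm}|\Im V(t)|\,\der t\Bigr),\quad x\in\cI_\pm.$$
Differentiating~\eqref{g.def} gives $g'=-\bigl(\sum_{k=-1}^{n-1}\lambda^{-k/2}\psi_k'\bigr)g$; the leading term $\lambda^{1/2}\psi_{-1}'=i(\lambda-V)^{1/2}$ has modulus $\approx\lambda^{1/2}$ by~\eqref{lam.V.comp}, and Lemma~\ref{lem:exp.bound} together with $|V|\ls\lambda$ on~$\cJ$ shows the remaining terms are of lower order, whence $|g'|\ls\lambda^{1/2}|g|$. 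Combining with $\|\xi^{(j)}\|_{L^\infty(\Real_\pm)}\ls\Delta_\pm^{-j}$ from~\eqref{xi.der} and $|\cI_\pm|=\Delta_\pm$,
$$\|\xi'g'\|_{L^2(\cI_\pm)}+\|\xi''g\|_{L^2(\cI_\pm)}\ls\bigl(\Delta_\pm^{-1/2}\lambda^{1/2}+\Delta_\pm^{-3/2}\bigr)E_\pm.$$

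The remaining and most delicate step is to bound the exponent in $E_\pm$ from below. In the unbounded case I would integrate $|\Im V'|/|\Im V|\ls\langle x\rangle^{\nu_\pm}$ from~\eqref{asm.eq:V'} (as in the argument preceding~\eqref{V.x.Delta}) to see that $|\Im V|$ stays comparable to $|\Im V(\delta_\pm)|$ on a window of width $\approx\delta_\pm^{-\nu_\pm}$ to the left of $\delta_\pm$. Combined with the defining relation $|\Im V(\delta_\pm)|=\lambda^{1/2}\langle\delta_\pm\rangle^{2\nu_\pm+\eps_1+1}$ from~\eqref{delta.def}, this yields
$$\tfrac{1}{\lambda^{1/2}}\textstyle\int_0^{\delta_\pm-\Delta_\pm}|\Im V|\,\der t\gs\lambda^{-1/2}|\Im V(\delta_\pm)|\,\delta_\pm^{-\nu_\pm}\approx\delta_\pm^{\nu_\pm+1+\eps_1}.$$
In the bounded case, \eqref{asm.eq:ImV} gives $|\Im V|\gs 1$ outside a fixed neighbourhood of zero, so the exponent is $\gs\delta_\pm/\lambda^{1/2}=\lambda^{\eps_2/2}$. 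The main obstacle is to verify that the polynomial prefactors $\Delta_\pm^{-3/2}$ and~$\lambda^{1/2}$ are dominated by these exponentials; this amounts to checking that $\nu_\pm+1+\eps_1>0$ in the unbounded case (guaranteed by~\eqref{asm.eq:ReV}) and is automatic in the bounded case. The resulting estimate is precisely~\eqref{kappa.est}.
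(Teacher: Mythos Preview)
Your proposal is correct and follows essentially the same route as the paper: bound the denominator from below using Lemma~\ref{lem:g.est} near the origin, control $|g|$ on the transition strips by the same lemma, bound $|g'|$ via the expansion and Lemma~\ref{lem:exp.bound}, and finally show that the exponent $\lambda^{-1/2}\int_0^{\delta_\pm-\Delta_\pm}|\Im V|$ dominates all polynomial prefactors using~\eqref{V.x.Delta} and the defining relation for~$\delta_\pm$. One small correction: the fact that $\nu_\pm+1+\eps_1>0$ in the unbounded case is not a consequence of~\eqref{asm.eq:ReV} but of Gronwall's bound~\eqref{Gronw.est}, which forces $\nu_\pm\ge-1$ whenever $V$ is unbounded at~$\pm\infty$; with this adjustment your absorption of $\lambda^{1/2}$ and $\Delta_\pm^{-3/2}$ into the exponential goes through exactly as you indicate.
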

\begin{proof}
First notice that we have $\|\xi g\| \gs 1$ from \eqref{|g|.est}. The main step is to estimate $|g(x)|^2$ for 
$x \in \overline{\cJ \setminus \cJ'}$ where $\xi'$ and $\xi''$ are supported.
We give details only for $x>0$; the other case is analogous.

We start with the case when $V$ is unbounded at $+\infty$. Let $x_0>0$ be so large that $\Im V(x) >0$ for all $x > x_0$. From the property \eqref{V.x.Delta} and the selected size of $\Delta_+$, see \eqref{delta.def},  we obtain for $x \in \cJ_+ \setminus \cJ_+'$ that
\begin{equation}
\int_{x_0}^{x} \Im V(t) \, \der t 
 \geq  
\int_{\delta_+-2\Delta_+}^{x} \Im V(t) \, \der t
\gs
\Delta_+ \Im V(\delta_+)  
\gs 
\frac{\Im V(\delta_+)}{\delta_+^{\nu_+}} .  
\end{equation}
Thus using \eqref{delta.def}, we get
\begin{equation}
\begin{aligned}
\lambda^{-\frac 12}\int_0^x \Im V(t) \, \der t 
&=
\lambda^{-\frac 12}\int_0^{x_0} \Im V(t) \, \der t  + \lambda^{-\frac 12}\int_{x_0}^{x} |\Im V(t)| \, \der t 
\\
& \gs 
-\lambda^{-\frac 12}+
\frac{\delta_+^{2\nu_+ +\eps_1+1}}{\Im V(\delta_+)} \frac{\Im V(\delta_+)}{\delta_+^{\nu_+}}
\gs \delta_+^{\nu_+ +\eps_1+1}.
\end{aligned}
\end{equation}
Hence it follows from \eqref{|g|.est} that (with some $c_3>0$)
\begin{equation}
\forall x \in \cJ_+ \setminus \cJ_+', \qquad 
|g(x)| \ls \exp(- c_3 \delta_+^{\nu_+ + \eps_1+1}).
\end{equation}

Additional terms appearing in $\|\xi' g'\|_{L^2(\Real_+)}$ can be estimated using \eqref{xi.der}, \eqref{delta.def}, \eqref{|psi_k|}, \eqref{lam.V.comp} and \eqref{Gronw.est}. In detail, for all $x \in \cJ_+ \setminus \cJ_+'$ we have (with some $c_4>0$) 
\begin{equation}
\begin{aligned}
|\xi'(x) g'(x)| &
\ls  
\delta_+^{\nu_+} 
\sum_{k=-1}^n \lambda^\frac k2|\psi_k'(x)|
\exp(- c_3 \delta_+^{\nu_+ + \eps_1+1})
\\
&\ls \delta_+^{\nu_+} \left(\lambda^\frac12 + \sum_{k=0}^n \frac{\langle x \rangle^{(k+1)\nu_+}}{\lambda^\frac k2} \right) \exp(- c_3 \delta_+^{\nu_+ + \eps_1+1})
\\
&\ls \delta_+^{\nu_+}  \left( \frac{|V(\delta_+)|}{\delta_+^{2\nu_+ +\eps_1 +1}}  + \delta_+^{(n+1)\nu_+} \right)
\exp(- c_3 \delta_+^{\nu_+ + \eps_1+1})
\\
& \ls \exp(- c_4 \delta_+^{\nu_+ + \eps_1+1}).
\end{aligned}
\end{equation}
The term $\|\xi'' g\|_{L^2(\Real_+)}$ is estimated similarly
(and in fact more easily).

Putting everything together, we obtain (with some $c_5>0$)
\begin{equation}
\frac{\|\xi'' g\|_{L^2(\Real_+)} + \|\xi' g'\|_{L^2(\Real_+)}}{\|\xi g\|_{L^2(\Real_+)}} 
\ls
\exp(- c_5 \delta_+^{\nu+\eps_1+1}).
\end{equation}

If $V$ is bounded at $+\infty$, the appropriate rate in \eqref{kappa.est} follows immediately from \eqref{|g|.est} and the selected size of $\delta_\pm$ and $\Delta_\pm$, see \eqref{delta.def}. 
\end{proof}

\subsection{Remainder estimate}
\label{subsec:rem.unbdd}

\begin{Theorem}\label{thm:basic}
Let Assumption~\ref{asm:basic} hold and set $n:=N-1$.
Let $g$ be as in \eqref{g.def} with $\{\psi'_k\}_{k\in[[-1,n-2]]}$ determined by \eqref{ODE}, $\xi$ be as in \eqref{xi.def} with $\delta_\pm,$ $\Delta_\pm$ as in \eqref{delta.def} and $f$ be as in \eqref{Ansatz}. 
Then
\begin{equation}\label{HV.f.dec}
\frac{\|(H_V-\lambda) f\|}{\|f\|} = \kappa(\lambda) + \sigma^{(n)}(\lambda), 
\end{equation}
where $\kappa$ is as in \eqref{kappa.est} 
and $\sigma^{(n)} = \sigma_-^{(n)}  + \sigma_+^{(n)}$ with, as $\lambda \to + \infty$,
\begin{enumerate}[i)]
	\item if $V$ is unbounded at $\pm \infty$
\begin{equation}
\sigma_\pm^{(n)}(\lambda) =
\begin{cases}
\BigO(\lambda^{-\frac{n+1}2} \sup_{x \in \cJ_\pm}{|V(x)|\langle x \rangle^{(n+1)\nu_\pm}}  ), & \nu_\pm < 0,
\\[1mm]
\BigO \left(\delta_\pm^{(n+1) \nu_\pm} \lambda^\frac{1-n}{2} \right),
& \nu_+ \geq 0,
\end{cases}
\end{equation}
\item if $V$ is bounded at $\pm \infty$
\begin{equation}\label{HV.f.dec.bdd}
\sigma_\pm^{(n)}(\lambda)=
\begin{cases}
	\BigO \left( \lambda^{-\frac{n+1}{2}} \right), & \nu_\pm <0,
	\\[2mm]
	\BigO \left( \lambda^{-\frac{n+1}{2}\left(1-(1+\eps_2)\nu_\pm\right)} \right), & \nu_\pm \geq 0.
\end{cases}
\end{equation}

\end{enumerate}

\end{Theorem}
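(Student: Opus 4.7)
\medskip

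\textbf{Proof proposal.} The starting point is the elementary decomposition \eqref{triangle},
\begin{equation}
(H_V - \lambda)f = -\xi'' g - 2\xi' g' + \xi\bigl[-g'' + (V-\lambda)g\bigr],
\end{equation}
and the fact that, by construction of the $\psi_k$'s via the cascade \eqref{ODE}, the bracketed term equals $r_n g$, see \eqref{rem.phi}. Using the triangle inequality and dividing through by $\|f\| = \|\xi g\|$, the estimate splits into two independent parts: the cut-off part $\|\xi'' g + 2\xi' g'\|/\|\xi g\|$, which is precisely the quantity $\kappa(\lambda)$ controlled by Proposition~\ref{prop:cut}, and the remainder part $\|\xi r_n g\|/\|\xi g\|$, which needs to be shown to contribute $\sigma^{(n)}(\lambda)$. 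For the second part I would use the trivial bound $\|\xi r_n g\| \leq \|r_n\|_{L^\infty(\mathcal{J})}\|\xi g\|$, reducing the problem to a pointwise estimate of $|r_n|$ on $\mathcal{J}$.

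Next I would feed Lemma~\ref{lem:r_n.est} into the picture and simplify using the structural information on $T_j^{r,s}$. Invoking Assumption~\ref{asm:basic}\,b), each factor $V^{(i)}$ satisfies $|V^{(i)}(x)| \lesssim |V(x)|\langle x\rangle^{i\nu_\pm}$ on $\mathcal{J}_\pm$, so the constraints $\sum_i i\alpha_i = r$ and $\sum_i \alpha_i = j$ in \eqref{Tjr.def} yield
\begin{equation}
|T_j^{r,s}(x)| \lesssim |V(x)|^j \langle x\rangle^{r\nu_\pm}, \qquad x \in \mathcal{J}_\pm.
\end{equation}
Combined with \eqref{lam.V.comp} (so $|\lambda-V|\approx \lambda$ on $\mathcal{J}$), Lemma~\ref{lem:r_n.est} becomes
\begin{equation}
|r_n(x)| \lesssim \frac{|V(x)|\langle x\rangle^{(n+1)\nu_\pm}}{\lambda^{(n+1)/2}} + \sum_{k=0}^{n-1}\sum_{l=2}^{n+1+k} \frac{|V(x)|^l \langle x\rangle^{(n+1+k)\nu_\pm}}{\lambda^{(n-1+k)/2+l}}.
\end{equation}

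The remaining task is to take $L^\infty(\mathcal{J}_\pm)$ on the right-hand side and verify that the first summand is dominant in all cases. Since $|V|\lesssim \lambda$ on $\mathcal{J}$ (cf.\ \eqref{lambdapul}), the ratio of any non-leading term to the leading one is bounded by $(|V|/\lambda)^{l-1}\langle x\rangle^{k\nu_\pm}/\lambda^{k/2} \lesssim \langle x\rangle^{k\nu_\pm}/\lambda^{k/2}$; evaluating at the worst location (namely $x=\delta_\pm$ when $\nu_\pm \geq 0$, and $x=0$ when $\nu_\pm<0$) and using the explicit choice of $\delta_\pm$ from \eqref{delta.def}, this ratio is $o(1)$ as $\lambda\to+\infty$, so the leading term indeed dictates the rate. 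It then remains to compute the sup of $|V(x)|\langle x\rangle^{(n+1)\nu_\pm}$ on $\mathcal{J}_\pm$ in each case: for $\nu_\pm<0$ unbounded one keeps the sup notation, for $\nu_\pm\geq 0$ unbounded one uses $|V(\delta_\pm)| \lesssim \lambda$ to land on $\delta_\pm^{(n+1)\nu_\pm}\lambda^{(1-n)/2}$, and for the bounded cases one simply substitutes $\delta_\pm = \lambda^{(1+\eps_2)/2}$ and uses that $|V|$ is bounded to get the two subcases in \eqref{HV.f.dec.bdd}.

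The main obstacle is the book-keeping in the last step: one must check that the non-leading terms in $|r_n|$ are really controlled by the leading one uniformly in $x\in \mathcal{J}_\pm$ and in all four subcases (bounded vs.\ unbounded, $\nu_\pm \gtrless 0$), which requires carefully exploiting \eqref{asm.eq:ReV} and the choice of $\delta_\pm$ from \eqref{delta.def} together with Gronwall-type growth bounds \eqref{Gronw.est} when $\nu_\pm > 0$. Everything else—the use of \eqref{triangle}, \eqref{rem.phi}, Proposition~\ref{prop:cut} and Lemma~\ref{lem:r_n.est}—is a straightforward assembly once these pointwise comparisons are in place.
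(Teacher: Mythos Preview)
Your proposal is correct and follows essentially the same route as the paper's proof: decompose via \eqref{triangle} and \eqref{rem.phi}, invoke Proposition~\ref{prop:cut} for the cut-off contribution $\kappa(\lambda)$, and bound $\|\xi r_n g\|/\|\xi g\|$ by $\|r_n\|_{L^\infty(\cJ_\pm)}$ using Lemma~\ref{lem:r_n.est} together with \eqref{asm.eq:V'} and \eqref{lam.V.comp}. The paper's proof is terser but uses exactly the same ingredients; in particular, for the unbounded $\nu_\pm\geq 0$ case it singles out the inequality $\lambda \gtrsim \langle\delta_\pm\rangle^{4\nu_\pm+2\eps_1+2}$ (from \eqref{asm.eq:ReV} and \eqref{delta.def}) as the key input, which is precisely what your ``book-keeping'' step needs, so your identification of the main obstacle is accurate. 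The mention of Gronwall \eqref{Gronw.est} is not actually needed here.
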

\begin{proof}
We employ the pseudomode construction for $n=N-1$. 
The estimate of the remainder $r_n$, see \eqref{rn.est}, 
and the assumption~\eqref{asm.eq:V'} 
together with~\eqref{lam.V.comp} and~\eqref{lambdapul}
yield that for $x>0$ and $V$ unbounded at $+\infty$ 
we have
\begin{equation}
|r_n(x )|
\ls
\begin{cases}
|V(x)|\langle x \rangle^{(n+1)\nu_+}\lambda^{-\frac{n+1}2}, & \nu_+ < 0, \\[1mm]
\delta_+^{(n+1) \nu_+} \lambda^\frac{1-n}{2} , & \nu_+ \geq 0,
\end{cases}
\end{equation}
and similarly for $x<0$. 
Here the case $\nu_+ \geq 0$ also employs
$\lambda \gs \langle \delta_+ \rangle^{4\nu_++2\eps_1+2}$,
which is a consequence of~\eqref{asm.eq:ReV} and~\eqref{delta.def}.
If $V$ is bounded at $\pm \infty$, the estimate of $r_n$ follows straightforwardly from \eqref{rn.est}, assumptions~\eqref{asm.eq:V'}, \eqref{asm.eq:ReV} 
and the choice of $\delta_\pm$ in \eqref{delta.def}.
\end{proof}

\subsection{Examples}
\label{subsec:Ex.1}

\begin{Example}[Polynomial-like potentials]
	\label{ex:pol.1}
Consider $V$ satisfying Assumption~\ref{asm:basic} with $\nu_-=\nu_+=-1$ 
and having the form 
\begin{equation}
  V := P_\beta + i Q_\gamma 
  \,,
\end{equation}
where $P_\beta$ and $Q_\gamma$ are real-valued functions satisfying 
\begin{equation}\label{PandQ}
\forall |x| \gs 1, \qquad 
|P_\beta(x)| \ls \langle x \rangle^\beta,
\qquad 
|Q_\gamma(x)| \approx \langle x \rangle^{\gamma},
\end{equation}
with some numbers $\beta\in\Real$ and $\gamma \geq 0$.
Typical examples of $P_\beta$ and $Q_\gamma$
are polynomials of degree $\beta$ and $\gamma$, respectively. 
Notice that a necessary condition to satisfy~\eqref{asm.eq:ImV} 
is $\gamma \geq 0$, while the sufficient one, 
which additionally guarantees~\eqref{asm.eq:ReV}, 
requires $\gamma > (\beta-2)/2$.
In particular for $\beta <2$ (\ie~$|\Re V(x)|$ 
grows slower than $x^2$) even a bounded $\Im V$ fits.

We define the quantity
\begin{equation}\label{omega.def}
  \omega := \max\{\beta,\gamma\} \geq 0
\end{equation}
and observe that $\omega=0$ if, and only if, $V$ is bounded.
If~$\omega$ is positive, then \eqref{delta.def} yields
\begin{equation}\label{delta.estimate}
\delta =\delta_- = \delta_+ \approx 
\lambda^{\frac1{2(\gamma+1)}+\epsilon},
\end{equation}
where $\epsilon>0$ can be made arbitrarily small by an appropriate choice of (small) $\eps_1>0$.
Hence the application of Theorem~\ref{thm:basic} yields  (with $n:=N-1$)
\begin{equation}\label{HVW.f.pol.pre}
\frac{\|(H_V -\lambda) f\|}{\|f\|} = 
\begin{cases}
\BigO \left(\lambda^{- \frac{n+1}2} \right), & \omega \leq n+1, 
\\[2mm]
\BigO \left(\lambda^{- \frac{n+1}{2} 
+ \frac{\omega-n-1}{2(\gamma+1)} + \epsilon(\omega-n-1))  } \right), 
& \omega > n+1, 
\end{cases}
\end{equation}
as $\lambda \to +\infty$.
Notice that the first case particularly involves bounded potentials (because $N \geq 1$)
and that the decay rate in the second case improves by diminishing~$\epsilon$.
It is also worth noticing that the restrictions on~$\beta$ and~$\gamma$ made above
imply the uniform bounds
\begin{equation}\label{uniform}
  \frac{\omega-n-1}{2(\gamma+1)} < 
  \begin{cases}
    1/2 & \mbox{if} \quad \gamma \geq \beta \,,
    \\
    1 & \mbox{if} \quad \gamma < \beta \,,
  \end{cases}
\end{equation}
which provides a rough estimate on the decay rate 
in the second case of~\eqref{HVW.f.pol.pre}.

Observe that the pseudomode with $n=1$ 
(\ie~we require $N\geq 2$) 
is sufficient to treat all polynomial-like potentials. 
The pseudomode with $n=0$ (\ie~$N\geq1$)
suffices for potentials growing not faster than linearly.
Notice also that for smooth potentials ($N=\infty$) the obtained rate is faster 
than any power of $\lambda^{-1}$.
\end{Example}

\begin{Example}[Exponential potentials]
	\label{ex:sinh}
Consider $V$ satisfying Assumption~\ref{asm:basic} 
with $\nu_-=\nu_+=0$ and $N\geq 3$; a simple smooth choice is 
$V(x) := \cosh x + i \sinh x$. 
Since $|V(x)| \ls e^{|x|}$, see \eqref{Gronw.est}, we have for sufficiently large $\lambda>0$ that 
\begin{equation}
\delta =\delta_- = \delta_+ \approx \log \lambda.
\end{equation}
Theorem~\ref{thm:basic} then gives
\begin{equation}
\frac{\|(H_V -\lambda) f\|}{\|f\|} = 
\BigO \left(\lambda^{\frac{2-N}2} \right), 
\end{equation}
thus exponential-type potentials can be treated using the pseudomodes with $n=2$.
\end{Example}

\begin{Example}[Bounded oscillating potentials]
	\label{ex:arctan}
Consider two smooth potentials
\begin{equation}
V_1(x) := i \arctan x, \qquad 
V_2(x) :=  2 i \arctan x +  i \sin \left(\langle x\rangle^{1+\mu} \right), 
\quad  0 < \mu <1.
\end{equation}
Clearly, $\nu_\pm=-2$ for $V_1$, however $\nu_\pm = \mu$ for $V_2$. Since both potentials are smooth, we can achieve an arbitrarily fast decay in \eqref{HV.f.dec.bdd} in both cases by taking $N$ large, nevertheless, substantially more terms in the pseudomode construction must be taken in the second case if $\mu$ is close to $1$.
\end{Example}

\subsection{Decaying potentials}
\label{subsec:dec}
Finally, we discuss a class of potentials that do not satisfy 
the basic assumption~\eqref{asm.eq:ImV},
but the method of the present section still enables one 
to construct the desired pseudomodes. Indeed,
the inequalities \eqref{|g|.est} suggest that the assumption \eqref{asm.eq:ImV} can be relaxed basically to $\Im V \notin L^1(\Real)$ if $\Im V$ 
has an appropriate sign for $x \gs 1$ and $x \ls 1$. Here we analyse the simplest examples, 
namely a class of smooth potentials with the asymptotic behaviour 
\begin{equation}
V(x) := i \, \frac{\sgn (x)}{\langle x \rangle^\gamma} \,, 
\qquad |x| \gs 1	, \quad 0 < \gamma <1.
\end{equation}
Since the essential spectrum of $H_V$ with this potential covers $[0, + \infty)$ and the numerical range of $H_V$ is a shrinking neighbourhood of this set, we will consider $\lambda = a + i b$ with $a \to +\infty$  and $b \to 0+$. 

The selection of a suitable $\delta_\pm$ for the cut-off 
is inspired by the estimate for $x \gs 1$
(the case $x \ls - 1$ and upper bounds are simpler)
\begin{equation}\label{Re.psi-1.dec}
\begin{aligned}
\int_0^x \Re (\lambda^\frac 12 \psi_{-1}'(t)) \, \der t
\gs
a^{-\frac12}
\int_0^x
(\langle t \rangle^{-\gamma} - b ) \; \der t
\gs
\frac{x^{1-\gamma}[1-(1-\gamma)bx^\gamma]
-C}{a^\frac12}
\end{aligned}
\end{equation}
with some $C \geq 0$. 
Thus, requiring that the first term in the expansion~\eqref{g.def}
leads to an integrable exponential, sought restrictions on $\delta_+$ read 
\begin{equation}\label{delta.dec}
a^\frac12 \delta_+^{\gamma-1} + b\delta_+^\gamma = o(1), \quad \lambda \to \infty;
\end{equation}
$\delta_-$ can be selected similarly 
and we can take $\Delta_\pm := \delta_\pm/4$. 
It can be also checked that the other terms in the expansion are negligible. Since $V$ is bounded, it is clear that the cut-off works and we indeed have a decay like in \eqref{cut.rates}. Regarding the remainders $r_n$, by taking sufficiently many terms in the expansion, we obtain a decay in \eqref{objective.intro} that is faster than any power of $1/a$. 

The set $\Omega$ where \eqref{objective.intro} holds can be obtained from \eqref{delta.dec}; in detail, we need 
\begin{equation}
b a^\frac{\gamma}{2(1-\gamma)} = o(1), \quad \lambda \to \infty.
\end{equation}
Observing that $V \in L^p(\Real)$ if $p\gamma >1$, we can further describe $\Omega$ by a condition essentially appearing in \cite[Thm.~5]{Laptev-Safronov_2009}:
\begin{equation}\label{Omega.dec}
b^{p-1} = o(a^{-\frac 12}), \quad \lambda \to \infty.
\end{equation}

\section{Lower regularity}
\label{sec:low.reg}

Our goal in this section is to treat potentials of lower regularity. The first possibility is a perturbative approach, \ie~we
search for conditions on a possibly singular perturbation $W$ guaranteeing that the pseudomodes constructed for a regular part~$V$, 
thus  ignoring~$W$, still exhibit a decay in \eqref{objective.intro}. The second option is to introduce a $\lambda$-dependent mollification $W^\eps$ of $W$ with $\eps=\eps(\lambda)$ and perform the construction for $V + W^\eps$; naturally the crucial point is to determine suitable dependence of the mollification on $\lambda$.

In both approaches, eventually, we need more precise information on the $L^p$-norms of pseudomodes. We make here additional assumptions on the growth of $V$; in fact we analyse in detail potentials with a polynomial growth, nonetheless, other cases may be treated similarly.

\subsection{Weighted $L^p$-norms of pseudomodes}

\begin{Lemma}\label{lem:Lp}
Let Assumption~\ref{asm:basic} hold, let $f$ be as in \eqref{Ansatz} 
with  $0 \leq n \leq N$. Then for all sufficiently large $\lambda >0$ 
the following holds.
\begin{enumerate}[i)]
\item If there is $\gamma \geq 0$ such that
\begin{equation}\label{ImV.lb}
\forall x \gs 1, \quad |\Im V(x)| \ls |x|^\gamma, 
\qquad \text{or} \qquad  
\forall x \ls -1, \quad |\Im V(x)| \ls |x|^\gamma, 
\end{equation}
then
\begin{equation}\label{f.p.lb}
\|f\|_p \gs \lambda^\frac{1}{2p(\gamma+1)}, \qquad 2 \leq p \leq \infty.
\end{equation}
\item If there are $\gamma_\pm \geq 0$ such that
\begin{equation}\label{ImV.ub}
|\Im V(x)| \gs
\begin{cases}
|x|^{\gamma_+}, & x \gs 1,
\\ 
|x|^{\gamma_-}, & x \ls -1,
\end{cases} 
\end{equation}
then
\begin{equation}\label{f.gamma.p.ub}
\|\langle x \rangle^\alpha f(x)\|_{L^p(\Real_\pm)} \ls \lambda^\frac{1+p\alpha}{2p(\gamma_\pm+1)}, \qquad 2 \leq p \leq \infty, \quad \alpha \geq 0 .
\end{equation}
\end{enumerate}
\end{Lemma}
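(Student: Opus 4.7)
The proof is a direct computation exploiting the two-sided pointwise bounds on $|g|$ from Lemma~\ref{lem:g.est}, combined with the assumed growth of $|\Im V|$. Throughout, $\xi$ is bounded by $1$ and equals $1$ on $\cJ'$, so upper bounds on $|f|$ come from the upper bound on $|g|$, while lower bounds on $\|f\|_p$ come from a subinterval of $\cJ'$ where $|g| \gs 1$.

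For part i) I would work on $\Real_+$ (the other side is symmetric). The hypothesis $|\Im V(t)| \ls t^\gamma$ for $t \gs 1$ gives
\begin{equation*}
\lambda^{-\frac 12} \int_0^{x} |\Im V(t)| \, \der t
\ls \lambda^{-\frac 12}\bigl(1 + x^{\gamma+1}\bigr),
\end{equation*}
so the lower bound in \eqref{|g|.est} yields $c_0 > 0$ such that $|g(x)| \gs 1$ on the interval $I_\lambda := [0,\, c_0 \lambda^{\frac{1}{2(\gamma+1)}}]$. A short verification using \eqref{delta.def} and \eqref{asm.eq:ReV} shows $I_\lambda \subset \cJ'_+$ for all sufficiently large $\lambda$, hence $\xi \equiv 1$ and $|f| \gs 1$ on $I_\lambda$. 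Integrating gives $\|f\|_p \gs |I_\lambda|^{\frac 1p} \gs \lambda^{\frac{1}{2p(\gamma+1)}}$ for $p<\infty$; for $p=\infty$ one simply uses $\|f\|_\infty \gs |f(0)|=1$ via $\psi_k(0)=0$.

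For part ii) the hypothesis $|\Im V(t)| \gs |t|^{\gamma_\pm}$ for $|t| \gs 1$ yields $\int_0^{|x|} |\Im V(t)|\, \der t \gs |x|^{\gamma_\pm + 1}$ once $|x|$ exceeds a fixed threshold, so the upper bound in \eqref{|g|.est} gives (with some $c>0$)
\begin{equation*}
|f(x)| \ls \exp\bigl(-c\, |x|^{\gamma_\pm + 1}\lambda^{-\frac12}\bigr), \qquad x \in \Real_\pm.
\end{equation*}
For $p<\infty$, substituting $y := x\,\lambda^{-\frac{1}{2(\gamma_\pm+1)}}$ and using $\langle x\rangle^{\alpha p} \ls 1 + \lambda^{\frac{\alpha p}{2(\gamma_\pm+1)}} y^{\alpha p}$ reduces $\|\langle x\rangle^\alpha f\|_{L^p(\Real_\pm)}^p$ to $\lambda^{\frac{1+\alpha p}{2(\gamma_\pm+1)}}$ times a convergent $y$-integral, yielding \eqref{f.gamma.p.ub}. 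For $p=\infty$, optimising $\langle x\rangle^\alpha \exp(-c\, x^{\gamma_\pm+1}\lambda^{-\frac12})$ gives a maximum of order $\lambda^{\frac{\alpha}{2(\gamma_\pm+1)}}$ attained at $x \sim \lambda^{\frac{1}{2(\gamma_\pm+1)}}$, matching the $p\to\infty$ limit of \eqref{f.gamma.p.ub}.

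The only nonroutine step is the containment $I_\lambda \subset \cJ'_+$ in part i). In the unbounded case, inserting $|\Im V(\delta_+)| \ls \delta_+^{\gamma}$ into the defining relation of $\delta_+$ in \eqref{delta.def} gives $\delta_+ \gs \lambda^{\frac{1}{2(\gamma+1-\eps_1)+\BigO(\nu_+)}}$, which dominates $\lambda^{\frac{1}{2(\gamma+1)}}$ since $\nu_+ \geq -1$ is forced by $V$ being unbounded (\cf~\eqref{Gronw.est}); combined with $\Delta_+ \leq \delta_+/4$ coming from $-\nu_+ \leq 1$, this gives the required containment. In the bounded case $\gamma=0$ suffices and $\delta_+ \approx \lambda^{(1+\eps_2)/2}$ trivially dominates $\lambda^{1/2}$. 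Everything else in the lemma is elementary manipulation of \eqref{|g|.est}.
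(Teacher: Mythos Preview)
Your proposal is correct and follows essentially the same route as the paper: both parts rest on the two-sided pointwise bounds of Lemma~\ref{lem:g.est}, the assumed polynomial growth of $|\Im V|$, and the rescaling $y = x\,\lambda^{-1/(2(\gamma+1))}$, together with the verification that $\delta_+-\Delta_+ \gs \lambda^{1/(2(\gamma+1))}$ via \eqref{delta.def} and $\nu_+ \geq -1$. The only cosmetic difference is in part~i): you extract a subinterval $I_\lambda$ on which $|g|\gs 1$ and measure it, whereas the paper integrates the full exponential bound over $\cJ_+'$ and changes variables; both arguments hinge on the same containment $I_\lambda \subset \cJ_+'$, and your justification of that containment (including $\Delta_+\leq \delta_+/4$ from $\nu_+\geq -1$) matches the paper's.
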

\begin{proof}
i) Suppose that the first inequality in \eqref{ImV.lb} holds. From \eqref{|g|.est} we have (with some $C \geq 0$, $c>0$)
\begin{equation}
\|f\|_p^p 
\gs 
\int_C^{\delta_+-\Delta_+}  e^{-pc \lambda^{-\frac12}  x^{\gamma+1}}  \; \der x
= 
\lambda^\frac1{2(\gamma+1)} \int_{C\lambda^{-\frac1{2(\gamma+1)}}}^{(\delta_+-\Delta_+) \lambda^{-\frac1{2(\gamma+1)}}} e^{-pc y^{\gamma+1}}  \; \der y.
\end{equation}
Thus it remains to verify that $(\delta_+-\Delta_+) \lambda^{-\frac1{2(\gamma+1)}} \gs 1$. The latter follows from \eqref{delta.def}. The case of a bounded $V$ is simple and in the unbounded case (necessarily with $\nu_+ \geq -1$, see \eqref{Gronw.est}) we get from \eqref{ImV.lb} and \eqref{delta.def} that
\begin{equation}
\frac{\delta_+^{2(\gamma+1)}}{\lambda }
\approx
\frac{\delta_+^{2(\gamma+1) + 4 \nu_+ + 2 \eps_1 +2}}{|\Im V(\delta_+)|^2}
\gs \delta_+^{4\nu_+ + 4 + 2 \eps_1} \gs 1.
\end{equation}
This proves~\eqref{f.p.lb} for $p \in [2,\infty)$
under the first of the assumptions in~\eqref{ImV.lb},
the second alternative is treated similarly.
The case $p=\infty$ is even simpler to show.

ii) For $x \geq 1$ we have $\langle x \rangle \approx x$, thus \eqref{|g|.est} and \eqref{ImV.ub} yield (with some $C \geq 1$, $c>0$)
\begin{equation}
\left( \int_0^C +  \int_C^{\delta_+} \right) \langle x \rangle^{p\alpha} e^{-p c \lambda^{-\frac 12} x^{\gamma_+ + 1}} \; \der x
\ls
1+
\lambda^\frac{1+p \alpha}{2(\gamma_++1)}
\int_0^{\infty}  y^{p\alpha} e^{-p c y^{\gamma_+ + 1}} \; \der y.
\end{equation}
The case $p=\infty$ can be checked by calculating the maximum of $|f|$ and the second case for $x \ls -1$ is analogous.
\end{proof}

The immediate consequence is a possibility to employ pseudomodes constructed for $V$ even for $V+W$, where $W$ is an $L^r$-perturbation. 

\begin{Theorem}\label{thm:Lr}
Let Assumption~\ref{asm:basic} hold and set $n:=N-1$. 
Let $\Im V$ satisfy \eqref{ImV.lb} and \eqref{ImV.ub} and let $W \in L^{r_-}(\Real_-) + L^{r_+}(\Real_+)$ with some $2 \leq r_\pm < \infty$. Then
\begin{equation}\label{HVW.f.ign}
\frac{\|(H_{V+W}-\lambda) f\|}{\|f\|} 
= \kappa(\lambda) + \sigma^{(n)}(\lambda) + \rho(\lambda),
\end{equation}
where $f$, $\kappa$ and $\sigma^{(n)}$ are as in Theorem~\ref{thm:basic} 
and $\rho = \rho_- + \rho_+$ with
\begin{equation}
\rho_\pm(\lambda) = \BigO\left(\lambda^\frac{\gamma-\gamma_\pm - \frac 2{r_\pm} (\gamma+1)}{4(\gamma_\pm+1)(\gamma+1)}\right), \qquad \lambda \to + \infty,
\end{equation}
where~$\gamma$ and~$\gamma_\pm$ are as in Lemma~\ref{lem:Lp}.
\end{Theorem}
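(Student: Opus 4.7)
The natural starting point is the triangle inequality
\[
\|(H_{V+W}-\lambda)f\| \leq \|(H_V-\lambda)f\| + \|Wf\|.
\]
Theorem~\ref{thm:basic} immediately handles the first term, producing the $\kappa(\lambda)+\sigma^{(n)}(\lambda)$ contribution (divided by $\|f\|$) on the right of~\eqref{HVW.f.ign}. So the entire task reduces to estimating $\|Wf\|/\|f\|$ from above by the stated $\rho(\lambda)$.

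For this I would split $W = W_-\chi_{\Real_-}+W_+\chi_{\Real_+}$ with $W_\pm \in L^{r_\pm}(\Real_\pm)$, so that $\|Wf\| \leq \|W_-f\|_{L^2(\Real_-)} + \|W_+f\|_{L^2(\Real_+)}$. On each half-line apply Hölder's inequality with exponents $r_\pm$ and $p_\pm$ chosen by $1/p_\pm + 1/r_\pm = 1/2$, giving
\[
\|W_\pm f\|_{L^2(\Real_\pm)} \leq \|W_\pm\|_{L^{r_\pm}(\Real_\pm)} \, \|f\|_{L^{p_\pm}(\Real_\pm)}.
\]
Since $r_\pm \in [2,\infty)$, the conjugate $p_\pm \in [2,\infty]$ falls exactly within the range to which Lemma~\ref{lem:Lp}(ii) applies (with $\alpha=0$ and using~\eqref{ImV.ub}), yielding
\[
\|f\|_{L^{p_\pm}(\Real_\pm)} \ls \lambda^{\frac{1}{2p_\pm(\gamma_\pm+1)}}.
\]
Simultaneously, the lower bound~\eqref{f.p.lb} from Lemma~\ref{lem:Lp}(i) at $p=2$ (this uses~\eqref{ImV.lb}) gives $\|f\| \gs \lambda^{\frac{1}{4(\gamma+1)}}$.

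Combining these, $\|W_\pm f\|_{L^2(\Real_\pm)}/\|f\|$ is controlled by a constant times $\lambda^{E_\pm}$ where
\[
E_\pm = \frac{1}{2p_\pm(\gamma_\pm+1)}-\frac{1}{4(\gamma+1)}.
\]
The remaining step is algebra: inserting $1/p_\pm = 1/2 - 1/r_\pm$ and writing over a common denominator $4(\gamma_\pm+1)(\gamma+1)$ yields
\[
E_\pm = \frac{\gamma-\gamma_\pm - \frac{2}{r_\pm}(\gamma+1)}{4(\gamma_\pm+1)(\gamma+1)},
\]
which is exactly the exponent claimed for $\rho_\pm(\lambda)$.

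The only non-mechanical point is checking that the hypotheses for the two parts of Lemma~\ref{lem:Lp} are simultaneously invoked on the correct sides; the theorem already assumes both~\eqref{ImV.lb} and~\eqref{ImV.ub}, so this is automatic. There is no serious obstacle; the entire proof is a triangle inequality plus Hölder plus the matched two-sided $L^p$ bounds of Lemma~\ref{lem:Lp}, and the work is essentially bookkeeping of exponents.
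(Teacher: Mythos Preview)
Your proof is correct and follows essentially the same approach as the paper: the paper's argument is precisely the triangle inequality followed by H\"older on each half-line (with the same conjugate exponent $1/p_\pm + 1/r_\pm = 1/2$), then the two-sided $L^p$ bounds of Lemma~\ref{lem:Lp} with $\alpha=0$, and finally the same exponent bookkeeping. Your write-up is in fact more explicit than the paper's own proof.
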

\begin{proof}
The estimate follows from \eqref{f.p.lb}, \eqref{f.gamma.p.ub} with $\alpha=0$ and H\"older inequality. In detail, with $2/r_\pm+ 2/s_\pm =1$, we have
\begin{equation}
\frac{\|Wf\|_{L^2(\Real_\pm)}}{\|f\|} 
\leq 
\frac{\|W\|_{L^{r_\pm}(\Real_\pm)}\|f\|_{L^{s_\pm}(\Real_\pm)}}{\|f\|} 
\ls 
\lambda^\frac{2(\gamma+1) -s_\pm(\gamma_\pm+1)}{4s_\pm(\gamma+1)(\gamma_\pm+1}
\end{equation}
and the claim follows when $s_\pm$ is  expressed in terms of $r_\pm$.
\end{proof}
The weighted $L^p$-estimates of $f$ can be used also to employ the pseudomode with $n=N$, instead of $n=N-1$ in Theorem~\ref{thm:basic}, and thereby lower assumptions on the regularity of $V$. 
\begin{Theorem}\label{thm:N+1}
Let Assumption~\ref{asm:basic} hold and set $n:=N$. 
Let $\Im V$ satisfy \eqref{ImV.lb} and \eqref{ImV.ub} 
and let $V^{(N+1)} \in L^2(\Real) + L^\infty_{-\alpha_-}(\Real_-) + L^\infty_{-\alpha_+}(\Real_+)$ with some $\alpha_\pm \geq 0$. 
Then
\begin{equation}\label{HVW.f.N+1}
\frac{\|(H_V-\lambda) f\|}{\|f\|} 
= \kappa(\lambda) + \sigma^{(n)}(\lambda) + \tau(\lambda),
\end{equation}
where $f$ is the pseudomode with $n=N$, $\kappa$ 
and $\sigma^{(n)}$ are as in Theorem~\ref{thm:basic} and $\tau= \tau_- + \tau_+$ with
\begin{equation}
\tau_\pm(\lambda) = 
\BigO
\left(
\lambda^{-\frac{N+1}{2} - \frac{1}{4(\gamma+1)}}
+
\lambda^{-\frac{N+1}{2} + \frac{\gamma-\gamma_\pm 
+ 2 \alpha_\pm  (\gamma+1) }{4(\gamma_\pm+1)(\gamma+1)}}
\right), \qquad \lambda \to + \infty,
\end{equation}
where~$\gamma$ and~$\gamma_\pm$ are as in Lemma~\ref{lem:Lp}.
\end{Theorem}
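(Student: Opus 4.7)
The plan is to re-run the JWKB scheme of Section~\ref{sec:prelim} with one additional correction term compared to Theorem~\ref{thm:basic}, \ie\ with $n:=N$ in place of $n=N-1$. The extra term in \eqref{g.def} yields one more power of $\lambda^{-1/2}$ in the decay of the remainder $r_N$ of \eqref{rem.phi}, at the price that $r_N$ now contains the top-order derivative $V^{(N+1)}$, which by hypothesis is only of class $L^2(\Real)+L^\infty_{-\alpha_-}(\Real_-)+L^\infty_{-\alpha_+}(\Real_+)$ rather than locally bounded. The recurrence \eqref{ODE} still defines $\psi_{-1},\dots,\psi_{N-1}$ because, by Lemma~\ref{lem:str}, they depend only on $V,V',\dots,V^{(N)}$, which belong to $W^{0,\infty}_{\rm loc}(\Real)$ under Assumption~\ref{asm:basic}; similarly, every lower-order piece of $r_N$ in Lemma~\ref{lem:r_n.est} involves at most $V^{(N)}$. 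Consequently, the argument of Theorem~\ref{thm:basic} carries over verbatim with $n$ replaced by $N$ for this part of the estimate, producing the contribution $\kappa(\lambda)+\sigma^{(N)}(\lambda)$; in particular, Proposition~\ref{prop:cut} applies unchanged since its proof relies only on \eqref{|g|.est} and on the choice of $\delta_\pm,\Delta_\pm$.

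The genuinely new piece of analysis is the control of the leading summand of $r_N$, which by Lemma~\ref{lem:r_n.est} satisfies $|r_N^{\rm top}|\lesssim|V^{(N+1)}|/|\lambda-V|^{(N+1)/2}$. I would decompose $V^{(N+1)}=W_2+W_\infty^{(-)}+W_\infty^{(+)}$ with $W_2\in L^2(\Real)$ and $W_\infty^{(\pm)}\in L^\infty_{-\alpha_\pm}(\Real_\pm)$, so that $|W_\infty^{(\pm)}(x)|\lesssim\langle x\rangle^{\alpha_\pm}$ on $\Real_\pm$. Combining $|\lambda-V|\approx\lambda$ on $\cJ$ from \eqref{lam.V.comp} with $\|g\xi\|_\infty\lesssim 1$ from \eqref{|g|.est}, one gets
\begin{equation}
\left\|\frac{W_2\,g\xi}{(\lambda-V)^{(N+1)/2}}\right\|_2\lesssim\lambda^{-(N+1)/2}\|W_2\|_2\lesssim\lambda^{-(N+1)/2},
\end{equation}
and, invoking the weighted bound \eqref{f.gamma.p.ub} of Lemma~\ref{lem:Lp} with $p=2$ and $\alpha=\alpha_\pm$,
\begin{equation}
\left\|\frac{W_\infty^{(\pm)}\,g\xi}{(\lambda-V)^{(N+1)/2}}\right\|_{L^2(\Real_\pm)}\lesssim\lambda^{-(N+1)/2}\|\langle x\rangle^{\alpha_\pm}f\|_{L^2(\Real_\pm)}\lesssim\lambda^{-(N+1)/2+\frac{1+2\alpha_\pm}{4(\gamma_\pm+1)}}.
\end{equation}

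Division by the lower bound $\|f\|\gtrsim\lambda^{1/(4(\gamma+1))}$ from \eqref{f.p.lb} with $p=2$, combined with the elementary identity
\begin{equation}
\frac{1+2\alpha_\pm}{4(\gamma_\pm+1)}-\frac{1}{4(\gamma+1)}=\frac{\gamma-\gamma_\pm+2\alpha_\pm(\gamma+1)}{4(\gamma_\pm+1)(\gamma+1)},
\end{equation}
then reproduces the two summands appearing in $\tau_\pm(\lambda)$. The main obstacle, logically preliminary to any of the above estimation, is the admissibility of the ansatz, \ie\ that $f=\xi g\in\Dom(H_V)$: the recurrence \eqref{ODE} only sees derivatives of $V$ up to order $N$ and hence $\psi_{N-1}'$ is well-defined in $L^\infty_{\rm loc}$, whereas $\psi_{N-1}''$---which enters the computation of $g''$---depends by Lemma~\ref{lem:str} on $V^{(N+1)}$ and therefore lies in $L^2_{\rm loc}+L^\infty_{\rm loc}$; together with the compact support of $\xi$ this gives $g\in W^{2,2}_{\rm loc}(\Real)$ and $-f''+Vf\in L^2(\Real)$, so that $f$ indeed belongs to $\Dom(H_V)$ as defined by \eqref{Hamiltonian}.
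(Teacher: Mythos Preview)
Your proposal is correct and follows essentially the same approach as the paper: re-run the construction with $n=N$, observe that all ingredients of Theorem~\ref{thm:basic} and Proposition~\ref{prop:cut} persist for the lower-order pieces, and handle the leading summand $|V^{(N+1)}|/|\lambda-V|^{(N+1)/2}$ via the decomposition of $V^{(N+1)}$ together with \eqref{lam.V.comp}, \eqref{f.p.lb} and \eqref{f.gamma.p.ub}. Your write-up is in fact more explicit than the paper's (which merely cites H\"older and Lemma~\ref{lem:Lp}), and the additional admissibility check $f\in\Dom(H_V)$ is a welcome clarification the paper leaves implicit.
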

\begin{proof}
If $f$ is taken as the pseudomode with $n=N$, the terms $\kappa$ 
and $\sigma^{(n)}$ in \eqref{HVW.f.N+1} are estimated in the same way as in Theorem~\ref{thm:basic}. The difference arises in the first term of $r_n$, 
see \eqref{rn.est}, since it contains $V^{(N+1)}$, more precisely, we need to estimate
\begin{equation}
\lambda^{-\frac{N+1}{2}} \|V^{(N+1)}f\|. 
\end{equation}
The claim follows straightforwardly from the assumption on $V^{(N+1)}$, H\"older inequality, \eqref{f.p.lb} and \eqref{f.gamma.p.ub}.
\end{proof}

\subsection{Examples}
\begin{Example}[Singularly perturbed polynomial-like potentials]
	\label{ex:pol.2}
Let~$V$ be as in Example \ref{ex:pol.1} and $W \in L^{r_-}(\Real_-) + L^{r_+}(\Real_+)$ with $2 \leq r_\pm <\infty$. If Assumption~\ref{asm:basic} holds with $N \geq 2$, 
Theorem~\ref{thm:Lr} and the already obtained rates $\sigma^{(n)}$, 
see Example~\ref{ex:pol.1} and in particular~\eqref{uniform}, yield
\begin{align}\label{HVW.f.pol}
\frac{\|(H_{V+W}-\lambda) f\|}{\|f\|} 
&= 
\BigO\left(
\lambda^{-\frac{ 1 }{2 r_\pm(\gamma+1)}} 
\right)
+
\begin{cases}
\BigO \left(\lambda^{- \frac N2} \right), & \omega \leq N, 
\\[1mm]
\BigO \left(\lambda^{- \frac{N}{2} + \frac{\omega-N}{2(\gamma+1) + \epsilon(\omega-N)  }} \right)
& \omega > N,
\end{cases}
\nonumber \\
&= \BigO\left(
\lambda^{-\frac{ 1 }{2 r_\pm(\gamma+1)}} 
\right), 
\end{align}
as $\lambda \to + \infty$.
Here the second equality follows by the restrictions
made on~$\beta$ and~$\gamma$ in Example \ref{ex:pol.1}
(\cf~particularly~\eqref{uniform}).
In other words, adding the singularity~$W$ 
deteriorates the decay rate~\eqref{HVW.f.pol.pre}
(at least when the result of Theorem~\ref{thm:Lr} is used).
\end{Example}

\begin{Example}[Imaginary step-like potential]
\label{ex:sgn.1}
Now we would like to treat
the discontinuous example from~\eqref{step-like}.
First, to apply Theorem~\ref{thm:Lr},
we specify a suitable splitting (to have a sufficiently regular~$V$)
\begin{equation}\label{splitting}
V(x) := i (1-\eta(x))\sgn (x), \qquad 
W(x) := i \eta(x) \sgn (x), 
\end{equation}
with some $\eta \in C_0^\infty((-1,1))$ and $\eta = 1$ on a neighbourhood of $0$. 
Then Theorem~\ref{thm:Lr} 
(with $N \geq 1$, $r_\pm:=2$ and $\gamma_\pm:=0=:\gamma$)
yields
\begin{equation}\label{HVW.f.sgn.1}
\frac{\|(H_{i \sgn}-\lambda) f\|}{\|f\|} = 
\BigO \left( \lambda^{-\frac 14} \right), \qquad \lambda \to + \infty.
\end{equation}
\end{Example}
\begin{Example}[Polynomial growth with a local singularity]
As an application of Theorem~\ref{thm:N+1}, let us consider
the following class of potentials
$$
  V(x) := i \, \sgn (x) \, |x|^\gamma
  \left(2+\sin |x|^{-\mu}\right)
  , \qquad 
  \mu \in (0,1) 
  \,, \ 
  \gamma \in \Nat 
  \,.
$$
If $\gamma \geq 2$ and $N:=\gamma-1$,
it is easy to verify that~$V$ also satisfies 
the other items of Assumption~\ref{asm:basic} (with $\nu_\pm := -1$),
namely the basic regularity requirement $V \in W^{N,\infty}(\Real)$.
Since the derivative $V^{(\gamma)}$ has a singularity at zero, however,
the best decay rate we can obtain by directly
applying Theorem~\ref{thm:basic} is 
\begin{equation} 
\frac{\|(H_{V}-\lambda) f\|}{\|f\|} = 
\BigO\left(
\lambda^{-\frac{\gamma-1}{2} + \frac 1{2(\gamma+1)} + \epsilon}  
\right), \qquad \lambda \to + \infty,
\end{equation}
where $\epsilon>0$ can be made arbitrarily small
(\cf~\eqref{delta.estimate}).
On the other hand, observing that 
$V^{(\gamma)} \in \sii(\Real) + L^\infty(\Real)$ 
and applying Theorem~\ref{thm:N+1} 
(with $\alpha_\pm:=0$ and $\gamma_\pm:=\gamma$), 
where the pseudomode with one more term in the expansion is employed, 
we obtain a better result, namely
\begin{equation} 
\frac{\|(H_{V}-\lambda) f\|}{\|f\|} = 
\BigO\left(
\lambda^{-\frac{\gamma}{2}} 
\right), \qquad \lambda \to + \infty.  
\end{equation}
\end{Example} 

\subsection{Mollification strategy}
Now we turn to the alternative approach to deal
with irregular potentials.

Let $ w \in C_0^\infty(\Real)$ with $0 \leq w \leq 1$, 
$\supp w = [-1,1]$ and $\|w\|_1=1$ and define
\begin{equation}\label{w.eps.def}
w_\eps(x):= \frac{1}{\eps} \, w \left(\frac x \eps\right), \qquad x \in \Real, \quad \eps >0.
\end{equation}
For $\phi \in L^p_{\rm loc}(\Real)$, we introduce the $L^p$ modulus of continuity on an interval  $\cJ \subset\Real$ by
\begin{equation}\label{omega.p.def}
\omega_p(\eps;\phi,\cJ):= \sup_{0 < |t| < \eps} 
\|\phi(\cdot + t) - \phi\|_{L^p(\cJ)}, \quad 1 \leq p < \infty.
\end{equation}
Finally, we introduce an $\eps$-neighbourhood of $\cJ$,
$\cJ_\eps := \{x \in \Real \, : \, \dist(x,\cJ) < \eps \}$.

The main idea in what follows is the mollification of a singular part of the potential. 
For $\phi \in L^p_{\rm loc}(\Real)$ and $w_\eps$ as in \eqref{w.eps.def}, we denote
\begin{equation}\label{phi.eps.def}
\phi^\eps : = w_\eps * \phi.
\end{equation}

To be able to estimate newly constructed pseudomodes, we need several basic properties of mollifications and their relation to the $L^p$ modulus of continuity summarised in the following lemma; the proof relies on Minkowski's integral inequality
and properties of the convolution and of~$w$. 
\begin{Lemma}\label{lem:mol}
Let $\phi \in L^p_{\rm loc}(\Real)$ with $1 \leq p < \infty$, $\phi^\eps$ be as in \eqref{phi.eps.def}, $\cJ$ be an interval and $\cJ_\eps$ its $\eps$-neighbourhood. 
Then for every $1 \leq p < \infty$, $j \in \Nat$ and $\eps>0$ we have
\begin{align}
\|\phi^\eps \|_{L^p(\cJ)} &\leq \|\phi \|_{L^p(\cJ_\eps)},
&
\|\phi^\eps \|_{L^\infty(\cJ)} &\leq \eps^{-\frac1p} \, \|\phi \|_{L^p(\cJ_\eps)}, \label{St.inf}
\\ 
\|\phi-\phi^\eps\|_{L^p(\cJ)} &\leq \omega_p\left(\eps; \phi, \cJ \right),
&
\|(\phi^\eps)^{(j)}\|_{L^p(\cJ)} &\leq \eps^{-j} \, 
\omega_p\left(\eps; \phi, \cJ \right)
\|w^{(j)}\|_{L^1}. 
\label{St.approx}
\end{align}
\end{Lemma}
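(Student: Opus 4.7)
The statement collects four standard mollifier estimates, so the proof should essentially be a bookkeeping exercise built on two tools: Young/H\"older for convolutions and Minkowski's integral inequality. The plan is to treat the four inequalities separately.

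For the first bound in \eqref{St.inf}, I would apply Minkowski's integral inequality to $\phi^\eps(x)=\int w_\eps(y)\phi(x-y)\,\der y$ to get $\|\phi^\eps\|_{L^p(\cJ)} \leq \int |w_\eps(y)| \, \|\phi(\cdot-y)\|_{L^p(\cJ)}\,\der y$; since $\supp w_\eps \subset [-\eps,\eps]$, for every $|y|<\eps$ the translate lives in $L^p(\cJ_\eps)$, and using $\|w_\eps\|_1 = 1$ the claim follows. For the second bound in \eqref{St.inf}, the idea is the standard H\"older split $w_\eps(x-y) = w_\eps(x-y)^{1/p}\,w_\eps(x-y)^{1/p'}$, which yields
\begin{equation}
|\phi^\eps(x)|^p \leq \left(\int w_\eps(x-y)\,\der y\right)^{p/p'}\int w_\eps(x-y)|\phi(y)|^p\,\der y,
\end{equation}
and since $0\le w_\eps\le\eps^{-1}$ with $\int w_\eps = 1$, the right-hand side is bounded by $\eps^{-1}\|\phi\|_{L^p(\cJ_\eps)}^p$.

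For the first bound in \eqref{St.approx}, I would use $\int w_\eps = 1$ to write $\phi(x)-\phi^\eps(x) = \int w_\eps(y)\,[\phi(x)-\phi(x-y)]\,\der y$, and then apply Minkowski to obtain
\begin{equation}
\|\phi-\phi^\eps\|_{L^p(\cJ)} \leq \int_{-\eps}^{\eps} w_\eps(y)\,\|\phi-\phi(\cdot-y)\|_{L^p(\cJ)}\,\der y \leq \omega_p(\eps;\phi,\cJ),
\end{equation}
again using $\|w_\eps\|_1 = 1$ and the definition of $\omega_p$. The step that is slightly less mechanical is the fourth bound in \eqref{St.approx}: here the key observation is that since $w \in C_0^\infty(\Real)$ with $j\ge 1$, integration by parts (or the fundamental theorem of calculus) yields $\int w^{(j)} = 0$, so $\int w_\eps^{(j)} = 0$. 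Differentiating under the integral sign gives $(\phi^\eps)^{(j)}(x) = \int w_\eps^{(j)}(x-y)\phi(y)\,\der y$, and subtracting the vanishing quantity $\phi(x)\int w_\eps^{(j)} = 0$ lets me rewrite this as $\int w_\eps^{(j)}(y)[\phi(x-y)-\phi(x)]\,\der y$. Minkowski once more, combined with the scaling identity $\|w_\eps^{(j)}\|_1 = \eps^{-j}\|w^{(j)}\|_1$, delivers the required estimate.

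There is no real obstacle here, only the mild care needed to (i) track on which set translates of $\phi$ are to be evaluated, which motivates the appearance of $\cJ_\eps$ in the first two estimates but only $\cJ$ in the last two (since the latter involve $\omega_p(\cdot;\phi,\cJ)$), and (ii) exploit the fact that $w^{(j)}$ has zero integral in order to recenter the convolution and produce modulus-of-continuity bounds rather than bare $L^p$ norms.
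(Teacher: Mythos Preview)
Your proposal is correct and follows exactly the approach the paper indicates: the paper does not spell out a proof but merely notes that it ``relies on Minkowski's integral inequality and properties of the convolution and of~$w$,'' which is precisely what you do. Your handling of all four inequalities is standard and accurate, including the key observation that $\int w_\eps^{(j)}=0$ needed for the last estimate.
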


We proceed with the construction of pseudomodes for a potential $V+W$ where~$W$ is possibly discontinuous and singular. In fact the pseudomodes are constructed for $V+W^\eps$ with a certain $\lambda$-dependent mollification. Thus besides usual remainders \eqref{rem.phi} we need to estimate also $\|(W-W^\eps)f\|$. 

While the construction can be in principle performed with an arbitrary number of terms, we restrict ourselves to the case $n\in [[0,1]]$ since the assumptions on the singular part $W$ would become more complicated and implicit for $n>1$. In spite of this restriction, we can still treat potentials with $\nu_\pm <0$, \ie~even with some super-polynomial growth or oscillations. 
More precisely, new pseudomodes are constructed under the following assumptions. 
\begin{Assumption}\label{asm:W}
Let $V$ satisfy Assumption~\ref{asm:basic} with $N \in [[1,2]]$ and $\nu_\pm <0$ and suppose that $W=W_1 + W_2$ satisfy
\begin{enumerate}[a)]
	\item $|\Im W_1| \leq (1-\eps) |\Im V|$ with some $0<\eps <1$ and with $\eps_1>0$ from Assumption~\ref{asm:basic}	
\begin{equation}\label{asm.eq:ReW_1}
\forall x \in \Real_\pm, \quad |\Re W_1(x)| \ls |\Im V(x)|^2 \langle x \rangle^{-4(\nu_\pm + \eps_1)-2}.
\end{equation}
	\item $W_2 \in L^2(\Real)$ and $\supp W_2$ is compact.
\end{enumerate}
\end{Assumption}

The mollification~\eqref{phi.eps.def}
is done separately for three parts of~$W$, namely
\begin{equation}\label{W.mol}
\tilde W:=(\chi_-W_1)^{\eps_-} + (\chi_+W_1)^{\eps_+}+ W_2^{\eps_0}
\end{equation}
with $\chi_\pm$ being the characteristic function of $\Real_\pm$ and
\begin{equation}\label{eps.iota.def}
\eps_\iota := \lambda^{-\alpha_\iota}, 
\quad \alpha_\iota \in (0,1), 
\quad \iota \in \{-,+,0\}.
\end{equation}
The expansion (see \eqref{g.def})
\begin{equation}\label{moly}
\tilde g
  := \exp\left(- \sum_{k=-1}^{n-1}\lambda^{-\frac k2} \psi_k(x)\right), 
\quad  n \leq 1,
\end{equation}
is determined by functions $\psi_{k}'$ satisfying~\eqref{psi'.list}
with $V$ replaced by 
\begin{equation}\label{V.mol}
\tilde V := V + \tilde W. 
\end{equation}
On the other hand, we keep the size of the cut-off the same as for~$V$ only, 
\ie~the new pseudomodes read
\begin{equation}\label{f.mol}
\tilde f := \xi \tilde g, 
\end{equation}
where $\xi$ is as in \eqref{xi.def} with $\delta_\pm$, $\Delta_\pm$ as in \eqref{delta.def} with $V$.

\begin{Lemma}
Let Assumption~\ref{asm:W} hold and 
$\tilde g$ be as in~\eqref{moly} with~\eqref{V.mol}.
Then 
\begin{equation}\label{cut.rates.mol}
\kappa(\lambda) :=
\frac{\|\xi'' \tilde g\| + \|\xi' \tilde g'\|}{\|\tilde g\|} 
= o(1), \quad \lambda \to + \infty,
\end{equation}
with $\kappa$ as in \eqref{kappa.est} (with possibly a smaller positive constant $c>0$).
\end{Lemma}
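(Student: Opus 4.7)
The plan is to follow the template of Proposition~\ref{prop:cut} verbatim, but with $V$ replaced by $\tilde V := V + \tilde W$ throughout. The cut-off parameters $\delta_\pm$, $\Delta_\pm$ are unchanged (they are built from $V$ alone), so the geometric setup is identical; what has to be transferred are the two-sided exponential bound of Lemma~\ref{lem:g.est} and the pointwise control of $\tilde g'$ on the support of $\xi'$, now for $\tilde g$ built from~$\tilde V$.

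The first step is to show that for all sufficiently large $\lambda$, $|\Im \tilde V(x)| \gs |\Im V(x)|$ and $|\lambda - \tilde V(x)| \approx \lambda$ on $\cJ$. Since $W_2$ has compact support, its mollification $W_2^{\eps_0}$ is supported in a fixed compact set $K$ once $\lambda$ is so large that $\eps_0 \leq 1$; its contribution to $\lambda^{-1/2}\int_0^{|x|}|\Im \tilde V|$ is $\BigO(\lambda^{(\alpha_0-1)/2}) = o(1)$ because $\alpha_0 < 1$, so it shifts the exponent in \eqref{|g|.est} only by a bounded multiplicative factor. For $|x|> \max(\eps_+,\eps_-)$, the split mollification collapses to $\tilde W(x)=W_1^{\eps_\pm}(x)+W_2^{\eps_0}(x)$ on $\Real_\pm$; combining the bound $|\Im W_1|\leq (1-\eps)|\Im V|$ with the near-constancy \eqref{V.x.Delta} of $|\Im V|$ on scales of size $\eps_\pm = \lambda^{-\alpha_\pm}$ (valid since $\nu_\pm<0$ from Assumption~\ref{asm:W} gives $\langle x\rangle^{-\nu_\pm}\geq 1 \geq 4\eps_\pm$ for large $\lambda$), one obtains $|\Im W_1^{\eps_\pm}(x)|\leq (1-\eps/2)|\Im V(x)|$ on $\cJ$. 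An analogous mollifier comparison applied to \eqref{asm.eq:ReW_1} gives $|\Re \tilde W(x)| \ls |\Re V(x)|\langle x\rangle^{-2\eps_1}$, which together with \eqref{lambdapul} yields $|\Re \tilde V|\leq\tfrac34\lambda$ on $\cJ$ for large $\lambda$. This delivers the desired comparisons, and the sign of $\Im \tilde V$ matches that of $\Im V$ for $|x|$ large.

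With these in hand, the proofs of Lemmata~\ref{lem:exp.bound} and~\ref{lem:g.est} run for $\tilde V$ with only minor changes (only $\psi_{-1}'$ and $\psi_0'$ enter since $n\leq 1$), producing a two-sided bound analogous to \eqref{|g|.est} with possibly smaller constants $c_1',c_2'>0$. In particular $\|\xi\tilde g\|\gs 1$, and on the support of $\xi'$, $\xi''$ (where $|x|\in[\delta_\pm-\Delta_\pm,\delta_\pm]$) the same computation as in the proof of Proposition~\ref{prop:cut} delivers the exponential smallness claimed in \eqref{kappa.est}, perhaps with a smaller positive constant~$c$.

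The main technical obstacle is the derivative term $\xi'\tilde g'$: when $n=1$, $\tilde g'$ involves $\tilde V' = V' + \tilde W'$ via $\psi_0'$, and Lemma~\ref{lem:mol} only yields $\|\tilde W'\|_{L^\infty(\cJ)}\ls\eps_\pm^{-1}\|W_1\|_{L^\infty(\cJ_{\eps_\pm})}\ls\lambda^{\alpha_\pm+\frac12}$, much larger than the bound on $V'$. One has to verify that this extra polynomial factor $\lambda^{\alpha_\pm}$ is harmlessly absorbed by the exponential smallness of $\tilde g(\delta_\pm-\Delta_\pm)$ established above; this is exactly what the constraint $\alpha_\iota<1$ in \eqref{eps.iota.def} together with the restriction $n\leq 1$ is designed to guarantee. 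Once this bookkeeping is completed, the estimate \eqref{cut.rates.mol} takes the same form as \eqref{kappa.est}.
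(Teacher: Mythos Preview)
Your strategy coincides with the paper's: show that $\tilde W$ does not disturb the comparisons $|\lambda-\tilde V|\approx\lambda$ and $|\Im\tilde V|\approx|\Im V|$ on $\cJ$ (away from the compact support of $W_2^{\eps_0}$), rerun the two-sided exponential bound of Lemma~\ref{lem:g.est} for $\tilde g$, and absorb the crude polynomial bound on $\tilde W'$ into the exponential smallness of $\tilde g$ on $\supp\xi'$.

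Two slips to correct. First, the claim $|\Re\tilde W(x)|\ls|\Re V(x)|\langle x\rangle^{-2\eps_1}$ does not follow from~\eqref{asm.eq:ReW_1}, which bounds $|\Re W_1|$ by $|\Im V|^2$, not by $|\Re V|$; in particular $\Re V$ may vanish identically while $\Re W_1$ does not. The correct route (as in the paper) is $|\Re(\chi_\pm W_1)^{\eps_\pm}(x)|\ls|\Im V(x)|^2\langle x\rangle^{-4(\nu_\pm+\eps_1)-2}$, which on $\cJ$ is $\ls\lambda\langle x\rangle^{-2\eps_1}=o(\lambda)$ by the definition of $\delta_\pm$ in~\eqref{delta.def}; this recovers your conclusion $|\Re\tilde V|\leq\tfrac34\lambda$. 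Second, $\|W_1\|_{L^\infty(\cJ)}\ls\lambda$ rather than $\lambda^{1/2}$, since~\eqref{lambdapul} only gives $|\Im V|\ls\lambda$ on $\cJ$; hence $\|\tilde W'\|_{L^\infty(\cJ_\pm)}\ls\lambda^{\alpha_\pm+1}$. Neither slip affects the conclusion, as any polynomial prefactor in $\lambda$ is absorbed by the exponential decay of $\tilde g$ on $\cJ\setminus\cJ'$.
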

\begin{proof}
We start with showing 	
	
\begin{equation}\label{f.lb.mol}
\|\tilde f\|^2 \gs \int_\cJ \exp \left(-c_3 \lambda^{-\frac 12} \int_0^{|x|} |\Im V(t)| \, \der t  \right) \, \der x
\end{equation}
with some $c_3>0$, where $\tilde f$ is defined in~\eqref{f.mol}. 
We give details on estimates on $\Real_+$, the other case is analogous. 
First notice that $\tilde{W}$ is locally bounded, see Lemma~\ref{lem:mol}.
Moreover, since $\eps_\pm = o(\Delta_\pm)$, 
we obtain from \eqref{asm.eq:V'}, \eqref{V.x.Delta} and assumptions on $W$ that
\begin{equation}\label{ImW1.V}
\begin{aligned}
|\Im(\chi_\pm W_1)^{\eps_\pm}(x)| 
&\leq 
\int_{\Real} w_{\eps_\pm}(y) |\Im W_1(x-y)| \; \der y 
\leq 
\sup_{|y|<\eps_\pm} |\Im W_1(x-y)| 
\\
&\leq 
(1-\eps)\sup_{|y|<\eps_\pm} |\Im V(x-y)|
\\
& \leq (1-\eps) \left(|\Im V(x)| 
+ \sup_{|y|<\eps_\pm} \left| \int_x^{x-y}|\Im V'(t)| \, \der t \right| \right)
\\
&\leq (1-\eps)|\Im V(x)| (1+ \BigO(\eps_\pm))
\end{aligned}
\end{equation}
and similarly, using \eqref{asm.eq:ReW_1} and \eqref{V.x.Delta},
\begin{equation}\label{ReW1.V}
\forall x \in \Real_\pm, \quad |\Re (\chi_\pm W_1)^{\eps_\pm}(x)| 
\ls 
|\Im V(x)|^2 \langle x \rangle^{-4(\nu_\pm + \eps_1)-2}.
\end{equation}
For $W_2$, Lemma~\ref{lem:mol} yields immediately
\begin{equation}\label{W2.est}
|W_2^{\eps_0}(x)| \leq \eps_0^{-\frac 12} \|W_2\| = o\left(\lambda^\frac12 \right), \quad \lambda \to +\infty.
\end{equation}

The estimates above imply that $\tilde{W}$ 
can be absorbed by~$V$ or~$\lambda$ in all relevant estimates in Lemmata~\ref{lem:exp.bound}, \ref{lem:g.est} and Proposition~\ref{prop:cut}; in particular notice that~$W_2$ affects the estimates only on a compact set due to the assumed boundedness of $\supp W_2$, and that the size of $\Re \tilde W$ is the largest possible complying with \eqref{asm.eq:ReV} and \eqref{delta.def}. 
Straightforward estimates of \eqref{g.formula} with $n \in [[0,1]]$ 
and with~$V$ replaced by~$\tilde V$ lead to (with some $c_1,c_2 >0$)
\begin{equation}\label{|g|.est.mol}
e^{-c_1 \lambda^{-\frac12}  \int_0^{|x|} |\Im V(t)| \, \der t} 
\ls | \tilde g(x)| \ls  
e^{-c_2 \lambda^{-\frac12}  \int_0^{|x|} |\Im V(t)| \, \der t}
\end{equation}
for $n \in [[0,1]]$, all sufficiently large $\lambda$ and all $x \in \cJ$; 
here \eqref{ImW1.V}, \eqref{ReW1.V}, \eqref{W2.est} 
and the boundedness of $\supp W_2$ were used.
Hence \eqref{f.lb.mol} follows. 

To verify \eqref{cut.rates.mol}, we need in addition that 
\begin{equation}\label{W1'.V}
\forall x \in \cJ_\pm, \quad
|((\chi_+W_1)^{\eps_\pm})'(x)| \ls \frac{|\Im V(x)|+ |\Im V(x)|^2 \langle x \rangle^{-4(\nu_\pm + \eps_1)-2}}{\eps_\pm};
\end{equation}
the proof si similar to \eqref{ImW1.V} and \eqref{ReW1.V}.
Hence, using \eqref{psi'.list}, \eqref{W1'.V} and \eqref{eps.iota.def}, we obtain
\begin{equation}
\forall x \in \cJ_\pm, \quad \lambda^\frac 12 |\psi_{-1}'(x)| + |\psi_0'(x)| \ls \lambda^\frac 12 + |V(x)| + |\Im V(x)|^2 \langle x \rangle^{-4(\nu_\pm + \eps_1)-2}.
\end{equation}
The rest of the proof is a simple modification of the one of Proposition~\ref{prop:cut}.
\end{proof}

Now we are in a position to state the main result of this section.
\begin{Theorem}\label{thm:mol}	Let Assumption~\ref{asm:W} hold 
and~$\tilde f$ be as in \eqref{f.mol} with $n \in[[0,1]]$. 
Then
\begin{equation}\label{HVW.f.mol}
\frac{\|(H_{V+W}-\lambda) \tilde f\|}{\|\tilde f\|} = \kappa(\lambda) + \sigma^{(n)}(\lambda) + \frac{\zeta^{(n)}(\lambda)}{\|\tilde f\|},
\end{equation}
where $\kappa$ and $\sigma^{(n)}$ are as in Theorem~\ref{thm:basic} and $\zeta^{(n)}= \zeta_-^{(n)} + \zeta_+^{(n)} + \zeta_0^{(n)}$ with, 
as $\lambda \to + \infty$,  
\begin{equation}
\begin{aligned}
\zeta_\pm^{(0)}(\lambda) 
& = \BigO \Big( \omega_2(\eps_\pm;W_1,\cJ_\pm) \left(1+  \eps_\pm^{-1} \lambda^{- \frac 12}  \right)
 \Big),
\\
\zeta_0^{(0)}(\lambda)  
& = \BigO \Big( \omega_2(\eps_0;W_2,\Real) \left(1+  \eps_0^{-1} \lambda^{- \frac 12}  \right) 
 \Big),
\\
\zeta_\pm^{(1)}(\lambda)  
& = \BigO \Big( \omega_2(\eps_\pm;W_1,\cJ_\pm) \left(1+  \eps_\pm^{-2} \lambda^{-1} \right)
+ \omega_4(\eps_\pm;W_1,\cJ_\pm) \eps_\pm^{-2} \lambda^{-2}
\Big),
\\
\zeta_0^{(1)}(\lambda)  
& = \BigO \Big( \omega_2(\eps_0;W_2,\Real) \left(1+ \eps_0^{-2} \lambda^{-  1} \right) 
+
\omega_4(\eps_0;W_2,\Real) \eps_0^{-2} \lambda^{-2} 
\Big),
\end{aligned}
\end{equation}
where $\eps_\iota$ are as in \eqref{eps.iota.def}. 
\end{Theorem}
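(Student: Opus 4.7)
My plan is to start from the decomposition
\[
(H_{V+W}-\lambda)\tilde f \;=\; (H_{V+\tilde W}-\lambda)\tilde f \;+\; (W-\tilde W)\tilde f,
\]
which isolates the clean JWKB piece associated with the smooth surrogate $\tilde V := V+\tilde W$ from the genuinely irregular mollification error. The first summand will be treated by the machinery of Theorem~\ref{thm:basic} applied to $\tilde V$ in place of $V$; the second summand will be handled by a direct H\"older estimate using Lemma~\ref{lem:mol}.

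For the first summand, writing $\tilde f = \xi \tilde g$ gives
\[
(H_{V+\tilde W}-\lambda)\tilde f \;=\; -\xi''\tilde g - 2\xi'\tilde g' + \xi\, r_n(\tilde V)\,\tilde g,
\]
where $r_n(\tilde V)$ is the JWKB remainder \eqref{rem.phi} with $V$ replaced by $\tilde V$. The cut-off contribution is precisely $\kappa(\lambda)\|\tilde f\|$ by \eqref{cut.rates.mol}. For the bulk term, the strategy is to split $r_n(\tilde V)=r_n(V)+[r_n(\tilde V)-r_n(V)]$ using the explicit formulae in \eqref{rn.est.list}. The preliminary estimates \eqref{ImW1.V}--\eqref{W2.est} together with \eqref{lam.V.comp} give $|\lambda-\tilde V|\approx \lambda$ on $\cJ$, which ensures that the ``pure-$V$'' part reproduces $\sigma^{(n)}(\lambda)\|\tilde f\|$ literally as in Theorem~\ref{thm:basic}.

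The correction $r_n(\tilde V)-r_n(V)$ is where Lemma~\ref{lem:mol} enters. For $n=0$ the only new term is $-i\tilde W'/[2(\lambda-\tilde V)^{1/2}]$, whose $L^2$-norm against $\tilde g$ is at most $\lambda^{-1/2}\|\tilde W'\|_{L^2(\cJ)}\|\tilde g\|_\infty$; applying \eqref{St.approx} separately to the three pieces of $\tilde W$ in \eqref{W.mol} yields $\|\tilde W'\|_{L^2} \ls \eps_\iota^{-1}\omega_2(\eps_\iota;\cdot)$, producing the $\eps_\iota^{-1}\lambda^{-1/2}\omega_2$ contribution to $\zeta^{(0)}$. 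For $n=1$, $r_1(\tilde V)$ picks up a $\tilde W''/[4(\lambda-\tilde V)]$ term, bounded by the same method to yield $\eps_\iota^{-2}\lambda^{-1}\omega_2$, and a $\tilde V'^2/(\lambda-\tilde V)^2$ term whose $\tilde W'$-part is estimated via
\[
\|\tilde W'^2\|_2 = \|\tilde W'\|_4^2 \ls \eps_\iota^{-2}\omega_4(\eps_\iota;\cdot)^2
\]
by \eqref{St.approx} with $p=4$, then absorbed into $\eps_\iota^{-2}\omega_4$ using the uniform bound $\omega_4\le 2\|W_1\|_{L^4(\cJ_\pm)}$. The cross-terms $V'\tilde W'$ inside $\tilde V'^2$ are strictly subdominant (since $\eps_\iota^{-1}\lambda^{-2}\ll \eps_\iota^{-2}\lambda^{-1}$ for $\eps_\iota=\lambda^{-\alpha_\iota}$ with $\alpha_\iota\in(0,1)$) and are absorbed.

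Finally, the second summand splits along \eqref{W.mol} and is estimated by H\"older,
\[
\|(\chi_\pm W_1-(\chi_\pm W_1)^{\eps_\pm})\tilde f\|_2 \le \omega_2(\eps_\pm;W_1,\cJ_\pm)\,\|\tilde f\|_\infty,
\]
and similarly for the $W_2^{\eps_0}$ piece; the upper bound in \eqref{|g|.est.mol} gives $\|\tilde f\|_\infty=\BigO(1)$, producing the bare $\omega_2$ contributions to $\zeta^{(n)}$. Dividing by $\|\tilde f\|$ and collecting everything yields \eqref{HVW.f.mol}. The principal technical hurdle is the bookkeeping needed to segregate those terms of $r_n(\tilde V)$ that retain the Theorem~\ref{thm:basic} structure from those incurring an $\eps_\iota^{-j}$ penalty from the mollification; the explicitness of \eqref{rn.est.list} makes this feasible for $n\in\{0,1\}$ but explains the restriction imposed in the statement.
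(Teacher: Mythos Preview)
Your approach is the same as the paper's: split $(H_{V+W}-\lambda)\tilde f=(H_{\tilde V}-\lambda)\tilde f+(W-\tilde W)\tilde f$, handle the first summand via the cut-off estimate \eqref{cut.rates.mol} and the explicit remainders \eqref{rn.est.list} with $V$ replaced by $\tilde V$, and bound the second summand by Lemma~\ref{lem:mol}. The paper's own proof is a three-line sketch pointing to precisely these ingredients, so you have filled in the intended details.

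One loose end worth flagging: your absorption of $\eps_\iota^{-2}\lambda^{-2}\omega_4^2$ into $\eps_\iota^{-2}\lambda^{-2}\omega_4$ via the bound $\omega_4\le 2\|W_1\|_{L^4(\cJ_\pm)}$ is not a uniform-in-$\lambda$ estimate, since $\cJ_\pm$ grows with $\lambda$ and $W_1$ need not lie in $L^4(\Real_\pm)$ under Assumption~\ref{asm:W}. The $(\tilde W')^2/(\lambda-\tilde V)^2$ term naturally produces $\omega_4^2$ rather than $\omega_4$; either a sharper interpolation is needed here, or the single power of $\omega_4$ in the stated $\zeta_\pm^{(1)}$ is a minor imprecision in the formula (in all the worked examples this term is subdominant anyway, so the distinction is immaterial there).
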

\begin{proof}
Inserting the pseudomode $\tilde f$, we obtain
\begin{equation}\label{H.f.mol}
\|(H_{V+W}-\lambda) \tilde f\| 
\leq \|(H_{\tilde V}-\lambda) \tilde f\| + \|(\tilde W - W)\tilde f\|.
\end{equation}
We need to estimate remainders \eqref{rem.phi} with $\tilde V$ and the second term in \eqref{H.f.mol}. The claim follows straightforwardly from \eqref{rn.est.list} and the properties of the mollification, see Lemma~\ref{lem:mol}.
\end{proof}

\subsection{Examples}
\label{subsec:Ex.mol}

First we prove a lemma on the $L^p$ modulus 
of continuity of piece-wise $C^1$ potentials with a controlled growth.

\begin{Lemma}\label{lem:W1.om.p}
Let $W$ be a piece-wise $C^1$ function, more precisely $W \in C^1(\Real \setminus \mathcal M)$ with $\mathcal M:= \{a_k\}_{k \in \Int}$ such that for all  $k \in \Int,$ $a_{k+1} - a_k \gs 1$ and for all $k \in \Int$ the one-sided limits $\lim_{x \to {a_k}_\pm }W(x)$ exist and are finite. Moreover, let $W$ satisfy 
	\begin{equation}
	\exists \beta_\pm \in \Real, \quad \forall x \in \Real_\pm, \quad |W(x)| \ls \langle x \rangle^{\beta_\pm},
	\end{equation}
	and
	\begin{equation}
	\exists \gamma_\pm \in \Real, \quad \forall x \in \Real_\pm \setminus \mathcal M, \quad |W'(x)| \ls \langle x \rangle^{\gamma_\pm}.
	\end{equation}
	Then, for all~$\eps$ small and~$\delta_\pm$ large,
	\begin{equation}\label{om.p.W}
	\omega_p(\eps; W, \cJ_\pm) 
	\ls
	\eps \delta_\pm^{\gamma_\pm + \frac 1p} + \eps^\frac 1p \delta_\pm^{\beta_\pm + \frac 1p}, \quad 2 \leq p < \infty.
	\end{equation}
If in addition $\supp W$ is bounded, then
\begin{equation}\label{om.p.bdd.supp}
\omega_p(\eps; W, \Real) 
\ls
\eps^\frac 1p, \quad 2 \leq p < \infty.
\end{equation}

\end{Lemma}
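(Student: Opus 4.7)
My plan is to fix $t \in \Real$ with $0 < |t| < \eps$ and split the interval of integration $\cJ_\pm$ into a \emph{good} set $G_t$ on which the translated interval $[x,x+t]$ (or $[x+t,x]$) avoids all jump points and a \emph{bad} set $B_t = \cJ_\pm \setminus G_t$ where $W$ may jump between $x$ and $x+t$. On $G_t$, the function $W$ is $C^1$ on every segment of length $|t|$, so the fundamental theorem of calculus gives a pointwise bound in terms of $W'$; on $B_t$ we use the triangle inequality and the pointwise growth bound on~$W$ itself together with an estimate on $|B_t|$.

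More concretely, on $G_t$ I would write $|W(x+t)-W(x)| \leq |t|\,\sup_{s \in [x,x+t]}|W'(s)| \ls \eps\,\delta_\pm^{\gamma_\pm}$, using that $|s| \leq \delta_\pm + \eps$ and $\gamma_\pm \geq 0$ in the relevant regime. Since $|G_t| \leq |\cJ_\pm| \ls \delta_\pm$, raising to the $p$-th power and integrating yields
\begin{equation}
\|W(\cdot+t)-W\|_{L^p(G_t)} \ls \eps\,\delta_\pm^{\gamma_\pm+\frac1p}.
\end{equation}
For the bad set, I would exploit that the jump points $\{a_k\}$ are separated by distance $\gs 1$, so that $B_t \subset \bigcup_k (a_k - |t|, a_k + |t|)$ has total measure $\ls \eps \delta_\pm$ after intersecting with $\cJ_\pm$ (there are at most $\BigO(\delta_\pm)$ such points in $\cJ_\pm$). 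On $B_t$ the pointwise estimate $|W(x+t)-W(x)| \ls \delta_\pm^{\beta_\pm}$ then gives
\begin{equation}
\|W(\cdot+t)-W\|_{L^p(B_t)} \ls \delta_\pm^{\beta_\pm} \, (\eps \delta_\pm)^{\frac1p} = \eps^{\frac1p} \delta_\pm^{\beta_\pm+\frac1p}.
\end{equation}
Adding the two contributions and taking the supremum over $|t|<\eps$ yields~\eqref{om.p.W}.

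For the bounded-support statement \eqref{om.p.bdd.supp}, since $\mathcal M \cap \supp W$ is finite and $W$, $W'$ are uniformly bounded on $\supp W \setminus \mathcal M$, the same splitting applies with the roles of $\delta_\pm^{\gamma_\pm}$ and $\delta_\pm^{\beta_\pm}$ replaced by constants and with $|B_t| \ls \eps$. The good contribution is then $\ls \eps$ and the bad contribution is $\ls \eps^{1/p}$, and for small $\eps$ the latter dominates, yielding~\eqref{om.p.bdd.supp}.

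I expect the only mildly delicate point to be the control of the measure of $B_t$: one needs the uniform separation $a_{k+1} - a_k \gs 1$ to argue that the number of jumps inside $\cJ_\pm$ grows at most linearly in $\delta_\pm$, and one has to be slightly careful that $\eps$ is small enough that the neighbourhoods $(a_k-|t|,a_k+|t|)$ are disjoint. Everything else is a routine application of the mean value theorem and Hölder-type bookkeeping.
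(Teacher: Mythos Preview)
Your proof is correct and follows essentially the same strategy as the paper: split $\cJ_\pm$ into a region where the segment between $x$ and $x+t$ avoids all jump points (where the fundamental theorem of calculus and the bound on $W'$ apply) and $\eps$-neighbourhoods of the jumps (where the pointwise bound on $W$ suffices), then use the separation $a_{k+1}-a_k \gs 1$ to count at most $\BigO(\delta_\pm)$ jumps inside $\cJ_\pm$. The paper organises the computation slightly differently---writing $\cJ_+ = \bigcup_k (a_k,a_{k+1})$ and splitting each subinterval at $a_{k+1}-\eps$---but the resulting estimates are line-for-line the same as yours.
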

\begin{proof}
We analyse only the case with $\cJ_+$,
the other situation is analogous. 
We can assume that $a_0=0$ and $a_{L+1}=\delta_+$ 
with some $ L \in \Nat$. 
Splitting the intervals $(a_k,a_{k+1})$ to $\eps$-neighbourhoods 
of the discontinuities and the rest and employing the assumptions on~$W$ and~$W'$, 
we have, for every $|t| < \eps$,
\begin{eqnarray*}
\lefteqn{
\int_{\cJ_+}|W(x+t)-W(x)|^p \, \der x
= \sum_{k=0}^L \int_{a_k}^{a_{k+1}} |W(x+t)-W(x)|^p \, \der x
}
\\
&& 
= \sum_{k=0}^L \int_{a_k}^{a_{k+1}-\eps} 
\left| \int_{x}^{x+t} W'(\xi) \, \der\xi \right|^p \, \der x
+ \sum_{k=0}^L \int_{a_{k+1}-\eps}^{a_{k+1}} |W(x+t)-W(x)|^p \, \der x
\\ 
&& 
\ls
\sum_{k=0}^L \int_{a_k}^{a_{k+1}-\eps} \!\!\! \der x \,
\Big(\esssup_{(a_0-\eps,a_{L+1})} |W'|\Big)^p \eps^p 
+\sum_{k=0}^L \int_{a_{k+1}-\eps}^{a_{k+1}} \!\!\! \der x \, 
\sup_{(a_{k+1}-\eps,a_{k+1}+\eps)} |W|^p 
\\ 
&& 
\ls \delta_+^{1+p\gamma_+} \, \eps^p  
+ \eps  \sum_{k=0}^L a_{k+1}^{p\beta_+}.
\end{eqnarray*}
Consequently, 
\eqref{om.p.W}~follows since $a_{k+1}-a_k \gs 1$ 
and the last sum can be estimated
by an integral (details are omitted).

If $\supp W$ is bounded, then the estimates are performed on a bounded interval independent of $\delta_\pm$ and so \eqref{om.p.bdd.supp} follows as well.
\end{proof}

\begin{Example}[Imaginary step-like potential continued] 
\label{ex:sgn.2}
Following Example~\ref{ex:sgn.1},
we keep the splitting of the imaginary sign potential~$i\sgn$
to the sum of the smooth potential~$V$ and the discontinuous~$W$
with a compact support, see~\eqref{splitting}.
The latter obeys Assumption~\ref{asm:W} with $W_1:=0$.
Applying Theorem~\ref{thm:mol} 
(with $n:=1$ and $\alpha_0:=1/2$ in~\eqref{eps.iota.def})
with help of Lemma~\ref{lem:Lp} (with $\gamma:=0$ and $p:=2$) to estimate $\|\tilde f\|$ 
and Lemma~\ref{lem:W1.om.p} to estimate 
the moduli of continuity in $\zeta^{(1)}(\lambda)$,
we arrive at
	\begin{equation*}
	\frac{\|(H_{i \sgn}-\lambda)\tilde f\|}{\|\tilde f\|} 
	= \BigO \left( \lambda^{-\frac12} \right), \qquad \lambda \to +\infty
	\,.  
	\end{equation*}
	This is an improvement with respect 
	to the rate $\lambda^{-\frac{1}{4}}$ 
	provided by Theorem~\ref{thm:Lr}, see Example~\ref{ex:sgn.1}.
	Nevertheless, 
	even this better rate is not optimal,
	as it is known from~\cite{HK} that there exists 
	a pseudomode with the decay rate $O(\lambda^{-1})$ 
	and that it is actually the best possible.
\end{Example}
\begin{Example}[Infinite steps]
Let us consider the step-like (odd) potential 
\begin{equation} 
 U(x) := i \, \lfloor |x| \rfloor^\gamma \, \sgn(x)\,,
  \qquad
  \gamma >0  
  \,,
\end{equation}
where~$\lfloor \cdot \rfloor$ denotes the floor function.
Hence~$U$ represents a piece-wise approximation of 
$x \mapsto i \, |x|^\gamma \sgn(x)$ 
(\cf~Example~\ref{ex:pol.1} with $P_\beta:=0$).
The basic hypothesis~\eqref{asm.eq:ImV} is clearly satisfied,
so it is expected that~$H_U$ admits pseudomodes.
However, Theorem~\ref{thm:basic} cannot be used because
of the lack of regularity required by Assumption~\ref{asm:basic}.

We show how Theorem~\ref{thm:mol} can be used instead. To this end, 
we split $U$ as
\begin{equation} 
  U = V + W \,,
  \qquad  
  W = W_1 + W_2
  \,,
\end{equation}
where
\begin{equation}
\begin{aligned}
  V(x) &:= i \, (1-\eta(x)) \, |x|^\gamma \sgn(x) 
  \,, \\
  W_1(x) &:= i \, (1-\eta(x)) 
  \left( \lfloor |x| \rfloor^\gamma - |x|^\gamma \right) 
  \sgn(x)
  \,, \\
  W_2(x) &:= i \eta(x) \, \lfloor |x| \rfloor^\gamma \, \sgn(x)
  \,,
\end{aligned}
\end{equation}
and $\eta \in C_0^\infty(\Real)$ is such that $0 \leq |\eta| \leq 1$
and $\eta=1$ on the interval $[-\gamma-1,\gamma+1]$.
Using the mean value theorem and properties of the floor function, 
we have
\begin{equation} 
  |W_1(x)| 
  \leq (1-\eta(x)) \,
  \gamma \, |x|^{\gamma-1} \, \big| \lfloor |x| \rfloor - |x| \big|
  \leq (1-\eta(x)) \,
  \gamma \, |x|^{\gamma-1} 
\end{equation}
for every $x\in\Real$.
Since~$W_1(x)$ equals zero if $|x| \leq \gamma+1$,
we see that Assumption~\ref{asm:W} holds with $\eps:=1/(\gamma+1)$.

Now we are in a position to apply Theorem~\ref{thm:mol} with $n:=1$. 
For $\sigma^{(1)}(\lambda)$, we always have a decay, 
see Example~\ref{ex:pol.1}.
Lemma~\ref{lem:Lp} with $p:=2$ yields 
\begin{equation}\label{f.estimate} 
\|\tilde f\| \gs \lambda^{\frac{1}{4(\gamma+1)}}
\end{equation}
for all sufficiently large~$\lambda$ 
and Lemma~\ref{lem:W1.om.p} immediately implies (we take $\alpha_0:=1/2$)
\begin{equation}\label{zeta0.estimate}  
\zeta_0^{(1)}(\lambda) = \BigO \left( 
\lambda^{-\frac14} 
\right)
, \qquad
\lambda \to +\infty \,.
\end{equation}
Again from Lemma~\ref{lem:W1.om.p} (with $\beta_\pm:=\gamma-1$ 
and $\gamma_\pm$ arbitrarily large negative), we obtain for $W_1$ 
(with $\alpha_\pm:=\alpha \in (0,1)$)
\begin{equation}
\lambda^{- \frac{1}{4(\gamma+1)}}\omega_p(\lambda^{-\alpha}; W_1, \cJ_\pm) 
= 
\BigO \left( \lambda^{-\frac \alpha p + \frac{1}{2p(\gamma+1)} + \frac{2\gamma-3}{4(\gamma+1)} + \epsilon}  \right), \qquad \lambda \to +\infty,
\end{equation}
where $\epsilon=\epsilon(\gamma,p)>0$ can be made arbitrarily small.
Calculating the individual terms in $\zeta_{\pm}^{(1)}$, 
we obtain the following conditions on $\alpha$ to have a decay in \eqref{HVW.f.mol}:
\begin{equation}
\frac{\gamma-1}{\gamma+1}
< \alpha < 
\frac13 \frac{\gamma+3}{\gamma+1} 
\,.
\end{equation}
These can be satisfied only if $\gamma<3$ and the corresponding decay rate in \eqref{HVW.f.mol} can be calculated in a straightforward way.
\end{Example}

\section{Pseudomodes for general curves}
\label{sec:pseudo.gen}
In this section, we focus on potentials~$V$ with unbounded $\Im V$
and investigate pseudomodes for other curves in the complex plane
than lines parallel to the real axis. The construction is basically the same as in Section~\ref{sec:pseudo.real}, however, instead of having the pseudomode localised around $0$, we work around a $\lambda$-dependent point. 

As the support of the pseudomode will be contained in~$\Real_+$, 
this construction is suitable also for operators in $L^2(\Real_+)$. 
In fact we shall rather proceed reversely and formulate the strategy for such a situation, the subsequent applicability of the results for problems in $L^2(\Real)$ 
is obvious. 

\subsection{Admissible class of potentials and curves}
\label{subsec:gc.pot.curv}
To keep the previous strategy working without more complicated and implicit conditions on $V$, we add an additional condition on $\Im V$, namely a control of $\Im V'(x)$. In detail, we assume the following.
\begin{Assumption}
	\label{asm:basic.gen}
	Let $N\in \Nat$, $N>1$, let $V \in W^{N,\infty}_{\rm loc}(\overline{\Real_+})$ satisfy
	\begin{equation}\label{asm.eq.ImV.gen}
	\lim_{x \to +\infty} \Im V(x) = + \infty
	\end{equation}
	together with all the conditions of 
	Assumption~\ref{asm:basic} for $x>0$. In addition suppose that 
	\begin{equation}\label{asm.eq:ImV'.lb}
	\forall x \gs 1, \qquad \Im V'(x)  \gs  \Im V(x) \langle x \rangle^\nu, 
	\end{equation}
	where $\nu:=\nu_+$. 
\end{Assumption}

In this section, we write
\begin{equation}\label{lam.ab.def}
\lambda = a + i b, \qquad a \in \Real, \ b \in \Real_+.
\end{equation} 
For sufficiently large $b$ we define 
the \emph{turning point}~$x_b$ of $\Im V$ by the equation
\begin{equation}\label{xb.def}
\Im V(x_b) = b,
\end{equation}
which is well-defined due to~\eqref{asm.eq:ImV'.lb}.
The cut-off is taken around the turning point $x_b$, namely:
\begin{equation}\label{xi.def.gen}
\begin{aligned}
& \xi \in C_0^\infty(\Real_+), \quad 0 \leq \xi \leq 1, 
\\
&\forall x \in (x_b-\delta+\Delta, x_b + \delta-\Delta), \quad \xi(x) =1,  
\\
\qquad 
& \forall x \notin (x_b-\delta,x_b + \delta), \quad \xi(x)=0. 
\end{aligned}
\end{equation}
Here we take
\begin{equation}\label{delta.def.gen}
\delta: =\frac{x_b^{- \nu}}2, \qquad \Delta := \frac{\delta}{4},
\end{equation}
and denote
\begin{equation}\label{Jb.def}
\cJ_b:=(x_b-\delta,x_b + \delta), \qquad
\cJ_b':=(x_b-\delta + \Delta, x_b + \delta -\Delta ).	
\end{equation}
Finally, we restrict the real part of $\lambda$ by
\begin{equation}\label{a.gen}
\forall x \in \cJ_b, \qquad  
b^\frac 23 x_b^\frac{2\nu}3 \ls |a|  \ls a- \Re V(x) \ls b^2 x_b^{-4\nu -4 \eps_1-2}.
\end{equation}
The set of admissible $a$'s is non-empty since $\sup_{x \in \cJ_b} |\Re V(x)| \ls b^2 x_b^{-4\nu -4 \eps_1-2}$ by assumption~\eqref{asm.eq:ReV} and the choice of $\cJ_b$ in \eqref{Jb.def}; moreover it follows from \eqref{asm.eq:ReV} that  $b^\frac 23 x_b^\frac{2\nu}3 \ls b^2 x_b^{-4\nu -4 \eps_1-2}$ for every sufficiently small $\eps_1>0$.

\subsection{Pseudomode construction}
The pseudomode will have the form
\begin{equation}\label{mode.def.gen}
f(x) := \xi(x) g(x) 
\qquad\mbox{with}\qquad
g(x) :=
\exp \left(- \sum_{k=-1}^{n-1} \lambda^{-\frac k2} \int_{x_b}^x   \psi_{k}'(t) \, \der t \right),
\end{equation}
where $\{\psi'_k\}_{k\in[[-1,n-1]]}$ are determined by \eqref{ODE}.

\begin{Proposition}\label{prop:cut.gen}
	Let Assumption~\ref{asm:basic.gen} hold, $0 \leq n \leq N$, $\{\psi'_k\}_{k\in[[-1,n-1]]}$ be determined by \eqref{ODE}, $\cJ_b$, $\cJ_b'$ be as in \eqref{Jb.def}, $\xi$, $g$ be as in \eqref{xi.def.gen}, \eqref{mode.def.gen}, respectively,  and $a$ satisfy \eqref{a.gen}.
	Then there exists $c>0$ such that
	\begin{equation}\label{cut.rates.gen}
	\frac{\|\xi'' g\|_{L^2(\Real_+)} + \|\xi' g'\|_{L^2(\Real_+)}}{\|\xi g\|_{L^2(\Real_+)}} 
	= \BigO(\exp(-c x_b^{\nu+1 +2\eps_1})), \quad b \to + \infty.
	\end{equation}
\end{Proposition}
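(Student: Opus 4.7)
The proof will follow the blueprint of Proposition~\ref{prop:cut}, but with the integration base point shifted from $0$ to the turning point~$x_b$ and with the complex geometry of $\lambda = a+ib$ carefully exploited. First, I would establish a lower bound $\|\xi g\|_{L^2(\Real_+)} \gs r$ for a suitable small $r = r(\lambda)>0$ by showing that $|g|$ is bounded below by a positive constant on an interval of length~$r$ centred at~$x_b$. Since $g(x_b)=1$ by the convention~\eqref{constant.fixing}, it suffices to estimate the leading-order exponent using the identity (analogous to~\eqref{sqr.rt})

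\begin{equation*}
\Re\bigl(i(\lambda-V(t))^{\frac12}\bigr)
= \frac{\Im V(t)-b}{\sqrt 2\,\bigl(|\lambda-V(t)|+a-\Re V(t)\bigr)^{\frac12}},
\end{equation*}

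which is integrable over a short interval near~$x_b$ since $\Im V(t)-b$ vanishes linearly there by~\eqref{asm.eq:ImV'.lb} and the denominator is bounded below using~\eqref{a.gen}. The sub-leading contributions coming from $\lambda^{-k/2}\psi_k'$ with $k\geq 0$ are controlled by an analogue of Lemma~\ref{lem:exp.bound}, proved exactly as there from Lemma~\ref{lem:str} and~\eqref{a.gen}.

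The heart of the argument is the upper bound on $|g(x)|$ for $x$ in the transition region $\cJ_b\setminus\cJ_b'$, say $x\in[x_b+\delta-\Delta,x_b+\delta]$ (the left side is symmetric since $\Re(i(\lambda-V)^{\frac12})$ changes sign at~$x_b$, so both ends of $\cJ_b$ contribute decay of~$|g|$). On this interval we estimate

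\begin{equation*}
-\log|g(x)|\gs \int_{x_b}^{x}\frac{\Im V(t)-b}{(|\lambda-V(t)|+a-\Re V(t))^{\frac12}}\,\der t.
\end{equation*}

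From~\eqref{asm.eq:ImV'.lb} and \eqref{V.x.Delta} (applicable since $\Delta=x_b^{-\nu}/4$), $\Im V\approx b$ on~$\cJ_b$, so integrating $\Im V'(s)\gs b x_b^{\nu}$ yields $\Im V(t)-b\gs b$ whenever $t-x_b\gs \delta\approx x_b^{-\nu}$. The denominator is controlled from above via the upper bound in~\eqref{a.gen} as $(a-\Re V(t))^{\frac12}+|b-\Im V(t)|^{\frac12}\ls b\,x_b^{-2\nu-2\eps_1-1}+\sqrt{b}$. Combining and integrating over a length~$\gs\delta=x_b^{-\nu}/2$ yields
$-\log|g(x)|\gs x_b^{\nu+1+2\eps_1}$,
up to lower-order corrections from the $\psi_k'$, $k\geq 0$, which are absorbed exactly as in the proof of Lemma~\ref{lem:g.est} using the structural formulas of Lemma~\ref{lem:str}.

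With these pointwise bounds, the remaining work is routine: use $\|\xi^{(j)}\|_{L^\infty}\ls\Delta^{-j}\approx x_b^{j\nu}$, see~\eqref{xi.der}, and write $g'=-g\sum_{k=-1}^{n-1}\lambda^{-k/2}\psi_k'$ so that $|g'(x)|$ in the transition region inherits the exponential smallness of $|g(x)|$ multiplied by an at most polynomial factor in $\lambda$ and $x_b$ (the estimate on $\sum_k\lambda^{-k/2}|\psi_k'|$ follows from Lemma~\ref{lem:str} together with~\eqref{a.gen} as in Lemma~\ref{lem:exp.bound}). Integrating the squared bounds over the transition region of length $\Delta$ and dividing by the lower bound on $\|\xi g\|_{L^2(\Real_+)}$, the polynomial prefactors in $b$ and $x_b$ are absorbed into the exponential by slightly shrinking the constant~$c$, yielding~\eqref{cut.rates.gen}. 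The principal obstacle is the bookkeeping in the upper bound step: one must verify that the interplay between~\eqref{asm.eq:ReV} (in the form inherited by Assumption~\ref{asm:basic.gen}) and~\eqref{a.gen} is tight enough that the denominator estimate does not erode the target exponent~$x_b^{\nu+1+2\eps_1}$.
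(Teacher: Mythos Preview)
Your proposal is correct and follows essentially the same route as the paper. The paper likewise expands $\Re(\lambda^{1/2}\psi_{-1}')$ via the complex square root formula, uses~\eqref{asm.eq:ImV'.lb} together with~\eqref{V.x.Delta} to linearise $\Im V(t)-b\approx\Im V'(x_b)(t-x_b)$ near the turning point, and controls the denominator through~\eqref{a.gen}; the only cosmetic difference is that the paper organises the denominator estimate as a dichotomy $|a|>b$ versus $|a|\leq b$ (yielding $x_b^{\nu+1+2\eps_1}$ and $b^{1/2}x_b^{-\nu}$ respectively) rather than writing the combined bound $b\,x_b^{-2\nu-2\eps_1-1}+\sqrt{b}$ as you do, and for the lower bound on $\|\xi g\|$ it integrates over the explicit short interval $(x_b,x_b+x_b^{-2|\nu|})$ and checks that the resulting exponent is $o$ of the decay exponent, which is your ``interval of length~$r$'' made concrete.
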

\begin{proof}
Let us first estimate 
$\sgn(x -x_b) \int_{x_b}^x \Re (\lambda^\frac 12 \psi_{-1}'(t)) \; \der t$. 
For $x_b < x <  x_b+\delta$ (the other case is analogous), 
an analogue of the the complex square root formula~\eqref{sqr.rt}, 
the choice of~$a$ in~\eqref{a.gen} 
and the mean value theorem lead to 
\begin{equation}\label{Re.psi-1.gen}
\begin{aligned}
\Re (\lambda^\frac 12 \psi_{-1}'(x))
& 
\gs
\frac{\Im V(x) - b}{(a-\Re V(x))^\frac12 + (\Im V(x)-b)^\frac 12} 
\\
& \gs
\frac{\Im V'(x_b)(x-x_b)}{|a|^\frac12 + (\Im V'(x_b)(x-x_b))^\frac12}. 
\end{aligned}
\end{equation}
In the second inequality
we have used also that the values of $\Im V'$ at $\cJ_b$ are comparable, 
see \eqref{asm.eq:V'} and \eqref{V.x.Delta}. 
Hence, for every $ x \in \cJ_b \setminus \cJ_b'$, 
we have  
\begin{equation}\label{general.-1est}
\int_{x_b}^x\Re (\lambda^\frac 12 \psi_{-1}'(t)) \, \der t
\gs
\frac{b \, x_b^{-\nu}}{|a|^\frac12 + b^\frac 12}
\gs 
\begin{cases}
x_b^{\nu+1 +2\eps_1}, & |a|>b,
\\[1mm]
b^\frac12 \,  x_b^{-\nu}, & |a| \leq b.
\end{cases}
\end{equation}
Here the first inequality employs~\eqref{asm.eq:ImV'.lb}
in the numerator and~\eqref{asm.eq:V'} in the denominator,
while the second inequality follows from~\eqref{a.gen}.
Notice that by \eqref{asm.eq:ReV} we have 
$b^\frac12 \,  x_b^{-\nu} \gs x_b^{\nu+1 +2\eps_1}$,
so the left hand side tends to infinity
as $b \to +\infty$ too.

Next we investigate $\int_{x_b}^x|\lambda^{-\frac k2} \psi_k'|$ for $k \in [[0,n-1]]$ and $x \in \cJ_b$.
The estimates analogous to \eqref{psik'.exp}, \eqref{psik'.est} 
and the choice of~$a$ in~\eqref{a.gen} yield
\begin{equation}\label{|psi_k|.gen}
\begin{aligned}
\int_{x_b}^x
|\lambda^{-\frac k2} \psi_k'(t)| \, \der t 
&\ls 
\sum_{j=1}^{k+1}
\int_{x_b}^x
\frac{|V(t)|^j x_b^{(k+1)\nu}}{ |a-\Re V(t)|^{j + \frac k2}}
\, \der t 
\ls 
\sum_{j=1}^{k+1}
\frac{(|a|^j + b^j)x_b^{k \nu}}{ |a|^{j + \frac k2}}.
\end{aligned}
\end{equation}
Further from \eqref{a.gen} and \eqref{asm.eq:ReV}
\begin{equation}
\sum_{j=1}^{k+1}
\frac{|a|^j x_b^{k \nu}}{ |a|^{j + \frac k2}}
\ls
\left(\frac{x_b^{\nu}}{ |a|^\frac1 2}\right)^k
\ls
\left(\frac{x_b^{2\nu}}{b}\right)^\frac k3
\ls
x_b^{-\frac2 3k (\nu +1 + \eps_1)}
\end{equation}
and
\begin{equation}
\sum_{j=1}^{k+1}
\frac{b^j x_b^{k \nu}}{ |a|^{j + \frac k2}}
\ls
\begin{cases}
x_b^{-k(\nu+1+\eps_1)}, & |a|>b,
\\[1mm]
\sum_{j=1}^{k+1} \left(b x_b^{-2\nu} \right)^\frac{j-k}3, & |a| \leq b.
\end{cases}
\end{equation}
Thus, using again \eqref{asm.eq:ReV}, 
for every $x \in \cJ_b \setminus \cJ_b'$ we get
\begin{equation}\label{psi.k.psi.1.gen}
\frac{\int_{x_b}^x|\lambda^{-\frac k2} \psi_k'(t)| \, \der t}{\int_{x_b}^x\Re (\lambda^\frac 12 \psi_{-1}'(t)) \, \der t}
\ls 
x_b^{-(\frac2 3k+1) (\nu +1 + \eps_1)}
+
\begin{cases}
x_b^{-(k+1)(\nu+1+\eps_1)}, & |a|>b,
\\[1mm]
x_b^{-(\nu+1+\eps_1)}, & |a| \leq b.
\end{cases}
\end{equation}

Using~\eqref{general.-1est} with help of~\eqref{psi.k.psi.1.gen}
and~\eqref{xi.der},
we obtain (with some $C_1>0$)
\begin{equation}\label{xi'g.gen}
\|\xi'' g\|_{L^2(\Real_+)} + \|\xi' g'\|_{L^2(\Real_+)} 
\ls 
\exp \left(- C_1 \frac{b x_b^{-\nu}}{|a|^\frac12 + b^\frac 12} \right).
\end{equation}
The estimate is clear for the first norm on the left hand side.
To control the extra terms obtained by differentiating~$g$,
we employ the bounds coming from Gronwall's inequality~\eqref{Gronw.est}
for the term $\lambda^\frac{1}{2} \psi_{-1}'$
and the other terms $\lambda^{-\frac{k}{2}} \psi_k'$ 
can be estimated similarly as in~\eqref{|psi_k|.gen}.

Finally, to show \eqref{cut.rates.gen}, we need to verify that $\|\xi g\|_{L^2(\Real_+)}$ is not too small. To this end notice that for $a< b$
\begin{equation}\label{gen.norm.1}
\begin{aligned}
\int_{x_b}^{x_b+x_b^{-2 |\nu|}} |\Re (\lambda^\frac 12 \psi_{-1}'(t))| \; \der t 
&\ls 
\int_{x_b}^{x_b+x_b^{-2 |\nu|}} |\Im V(t) - b|^\frac12 \; \der t 
\\
&\ls 
b^\frac12 x_b^{-3|\nu| + \frac12 \nu}
\end{aligned}
\end{equation}
and for $a \geq b$
\begin{equation}\label{gen.norm.2}
\begin{aligned}
\int_{x_b}^{x_b+x_b^{-2 |\nu|}} |\Re (\lambda^\frac 12 \psi_{-1}'(t))| \; \der t 
&\ls 
\int_{x_b}^{x_b+x_b^{-2 |\nu|}} \frac{\Im V'(x_b)(t-x_b)}{|a|^\frac 12} \; \der t 
\\
&\ls
|a|^{-\frac12} b \, x_b^{\nu-4|\nu|}.
\end{aligned}
\end{equation}
Since
\begin{equation}
\begin{cases}
\displaystyle 
b^\frac12 x_b^{-3|\nu| + \frac12 \nu} = o \left(\frac{b \, x_b^{-\nu}}{|a|^\frac12 + b^\frac 12}\right), & a<b,
\\[1mm]
\displaystyle 
|a|^{-\frac12} b \, x_b^{\nu-4|\nu|} = o \left(\frac{b \, x_b^{-\nu}}{|a|^\frac12 + b^\frac 12}\right), & a \geq b,
\end{cases}
\end{equation}
we obtain in both cases (with some $C_2>0$)
\begin{equation}\label{xi'g.gen.2}
\frac{\|\xi'' g\|_{L^2(\Real_+)} + \|\xi' g'\|_{L^2(\Real_+)}}{\|\xi g\|_{L^2(\Real_+)}}
\ls 
\exp \left(- C_2 \frac{b \, x_b^{-\nu}}{|a|^\frac12 + b^\frac 12} \right).
\end{equation}
The claim \eqref{cut.rates.gen} follows from the last inequality in \eqref{general.-1est}.
\end{proof}
\begin{Theorem}\label{thm:basic.gen}
Let Assumption~\ref{asm:basic.gen} hold, $f$ be as in \eqref{mode.def.gen} with $n=N-1$ and $a$ satisfy \eqref{a.gen}.
Then, as $b \to + \infty$, 
\begin{equation}\label{HV.f.dec.gen}
\begin{aligned}
\frac{\|(H_V-\lambda) f\|_{L^2(\Real_+)}}{\|f\|_{L^2(\Real_+)}} 
&= 
\BigO \left( \exp(-c x_b^{\nu+1 +2\eps_1})
+
x_b^{N\nu}
\sup_{x \in \cJ_b} \frac{b + |\Re V(x)|}{\left(a-\Re V(x)\right)^\frac{N}{2}} 
\right.
\\
& \quad  \left. +
\sum_{k=0}^{N-2}
\sum_{l=2}^{N+k}
x_b^{(N+k)\nu}
\sup_{x \in \cJ_b} \frac{(b + |\Re V(x)|)^l}{\left(a-\Re V(x)\right)^{\frac{N-2+k}{2}+l}} 
\right).
\end{aligned}
\end{equation}
\end{Theorem}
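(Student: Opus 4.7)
My plan is to follow the proof template of Theorem~\ref{thm:basic}. Insert the pseudomode $f=\xi g$ into $(H_V-\lambda)f$ via the identity \eqref{triangle}
\[
-f'' + (V-\lambda)f = -\xi'' g - 2\xi' g' + \xi\, r_{N-1}\, g,
\]
where $r_{N-1}$ is the remainder in \eqref{rem.phi} with $n=N-1$. Since $\supp\xi \subset \cJ_b$, dividing by $\|f\|_{L^2(\Real_+)}=\|\xi g\|_{L^2(\Real_+)}$ gives the pointwise estimate
\[
\frac{\|\xi\, r_{N-1}\, g\|_{L^2(\Real_+)}}{\|\xi g\|_{L^2(\Real_+)}} \leq \sup_{x\in\cJ_b}|r_{N-1}(x)|,
\]
so it is enough to control this supremum and the cut-off ratio separately.

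The cut-off ratio $(\|\xi'' g\|_{L^2(\Real_+)}+2\|\xi' g'\|_{L^2(\Real_+)})/\|\xi g\|_{L^2(\Real_+)}$ is directly bounded by Proposition~\ref{prop:cut.gen} and contributes the summand $\BigO(\exp(-c\, x_b^{\nu+1+2\eps_1}))$ to \eqref{HV.f.dec.gen}. For the supremum of $|r_{N-1}|$ I would apply Lemma~\ref{lem:r_n.est} and invoke two uniform comparison statements on $\cJ_b$ valid for all sufficiently large $b$: first, $|\Im V(x)| \approx b$, which follows from \eqref{V.x.Delta} evaluated at $x_b$ since $\Delta = x_b^{-\nu}/8 \leq x_b^{-\nu}/4$; second, $\langle x\rangle \approx x_b$, which follows from $\delta = x_b^{-\nu}/2$ together with $x_b\to+\infty$. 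Combined with $|V(x)|\leq |\Re V(x)|+|\Im V(x)|$, these give $|V(x)|\lesssim |\Re V(x)|+b$ on $\cJ_b$, and iterating \eqref{asm.eq:V'} yields the pointwise bound $|V^{(m)}(x)|\lesssim (|\Re V(x)|+b)\, x_b^{m\nu}$ for every $m\in[[1,N]]$.

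It remains to feed these bounds into the structure of $T_l^{r,s}$ from \eqref{Tjr.def}: the constraint $\sum_i i\alpha_i = r$ collapses the cumulative derivative weight to the single factor $x_b^{r\nu}$, while $\sum_i \alpha_i = l$ provides exactly $l$ factors of $(|\Re V(x)|+b)$, so $|T_l^{N+k,N-1}(x)| \lesssim (|\Re V(x)|+b)^l\, x_b^{(N+k)\nu}$. The admissible range \eqref{a.gen} forces $a-\Re V(x) > 0$ on $\cJ_b$, hence $|\lambda-V(x)|\geq a-\Re V(x)$. Substituting the bounds for $V^{(N)}$, $T_l^{N+k,N-1}$ and $|\lambda-V|$ into \eqref{rn.est} with $n=N-1$ and taking $\sup_{\cJ_b}$ produces the two remaining summands of \eqref{HV.f.dec.gen} term by term.

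The main obstacle is purely the combinatorial bookkeeping: one has to verify that the iteration of \eqref{asm.eq:V'} inside the monomials defining $T_l^{N+k,N-1}$ gives precisely the clean exponent $x_b^{(N+k)\nu}$ advertised in the theorem, which relies on the two identities in \eqref{Tjr.def}. Once the uniform comparisons $|\Im V|\approx b$ and $\langle x\rangle\approx x_b$ on $\cJ_b$ are in place, no further analytic ingredient beyond Proposition~\ref{prop:cut.gen} and Lemma~\ref{lem:r_n.est} is needed.
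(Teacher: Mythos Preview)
Your proposal is correct and follows exactly the approach of the paper's own proof, which merely states that the claim follows from Proposition~\ref{prop:cut.gen}, the remainder estimate~\eqref{rn.est}, and the choice of $a$ in~\eqref{a.gen}; you have simply unpacked these ingredients explicitly. One tiny cosmetic point: when justifying $|\Im V|\approx b$ on $\cJ_b$ via~\eqref{V.x.Delta}, the relevant half-width is $\delta=x_b^{-\nu}/2$ (not $\Delta$), which is precisely the $2\Delta$ allowed in~\eqref{V.x.Delta}.
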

\begin{proof}
The claim follows straightforwardly from Proposition~\ref{prop:cut.gen}, the estimate of the remainder $|r_n|$, see \eqref{rn.est}, and the choice of $a$, see \eqref{a.gen}. 
\end{proof}

\subsection{Examples}
\label{subsec:Ex.gen}
\begin{Example}[Example \ref{ex:pol.1} continued] 	
\label{ex:pol.Omega}
	
We illustrate applicability of Theorem~\ref{thm:basic.gen} on the imaginary  monomial potentials, namely $V(x) = i x^\gamma$ for $x>0$ and $\gamma \geq 1$. With this choice, we have $\nu=-1$, $x_b = b^\frac 1\gamma$ and we may take $a$ as (with $\eps>0$)
\begin{equation}\label{optimality}
b^{\frac 23 \frac{\gamma-1}{\gamma} + \eps } 
\ls a \ls 
b^{2 \frac{\gamma+1}{\gamma} - \eps},
\end{equation}
see \eqref{a.gen}. Straightforward calculations yield that for a sufficiently large $N$ we get a decay in \eqref{HV.f.dec.gen}. In other words we show that there are pseudomodes (with a decay in \eqref{HV.f.dec.gen}) in a region bounded by curves $\Gamma_{1,2}$ in $\Com$ given by
\begin{equation}
\Gamma_1(t) := t^{\frac 23 \frac{\gamma-1}{\gamma} + \eps} + i t, \qquad 
\Gamma_2(t) := t^{2 \frac{\gamma+1}{\gamma} - \eps} + it. 
\end{equation}
Notice that for $\gamma =2$, we obtain (with an obvious re-parametrisation) the curves $\eta + i \eta^p$ with $1/3 < p<3$ of the Boulton's conjecture, \cf~\cite{Boulton_2002},  which are known to be optimal, \cf~\cite{Pravda-Starov_2006}. 
\end{Example}

\begin{Example}[Semiclassical operators]\label{ex:s-c}
Let us briefly explain how the semiclassical setting, see \eg~\cite{Davies_1999-NSA}, can be treated using our approach and how previously used assumptions can be relaxed. For a sufficiently regular potential~$U$, we search for pseudomodes 
of the semiclassical operator
\begin{equation}
- h^2 \frac{\der^2}{\der x^2} + U(x) -z, \quad h >0,
\end{equation}
corresponding to a pseudoeigenvalue $z \in \Com$,
in the limit $h \to 0$. 

First we factor the parameter $h^2$ out and obtain \eqref{Hamiltonian.intro}
with the scaled potential $V(x) := h^{-2} U(x)$ 
and pseudoeigenvalue $\lambda := h^{-2} z$ in our notations,
see~\eqref{objective.intro}. 
The pseudomode is constructed around the point $x_0$ satisfying the equation $\Im V(x_0) = \Im \lambda$, \ie~$\Im U(x_0) = \Im z$. Notice that $x_0$ is determined only by $\Im z$, which is fixed here. 

The cut-off is successful if there exist $\delta_\pm$ such that for all $x \in (x_0 + \delta_+/2,x_0 + \delta_+)$
\begin{equation}\label{Re.psi-1.s-c}
\int_{x_0}^{x} \frac{\Im U(t) - \Im U(x_0)}{(\Re z-\Re U(t))^\frac12 
+ |\Im U(t) - \Im U(x_0)|^\frac 12} \, \der t
\\
\gs  h^{1-\eps}
\end{equation}
with some $\eps >0$ and similarly for $\delta_-$. Indeed, an appropriately modified first inequality in \eqref{Re.psi-1.gen} yields
\begin{equation}
\Re (\lambda^\frac 12 \psi_{-1}'(t)) \gs h^{-1} \frac{\Im U(t) 
- \Im U(x_0)}{(\Re z-\Re W(t))^\frac12 + |\Im U(t) - \Im U(x_0)|^\frac 12}.
\end{equation}
However, \eqref{Re.psi-1.s-c} can be satisfied \eg~when the
Davies' condition \cite{Davies_1999-NSA}
\begin{equation}\label{Davies}
  \Im U'(x_0) >0
  \qquad \mbox{and} \qquad
  z = \eta^2 + U(x_0) 
  \quad \mbox{with} \quad \eta^2>0
\end{equation}
is imposed; indeed, Taylor's theorem yields 
\begin{equation}
\begin{aligned}
\Im U(t) - \Im U(x_0) &= \Im U'(x_0)(t-x_0) + \BigO((t-x_0)^2),&
\\
\Re z - \Re U(t) &= \eta^2+ \BigO(|t-x_0|), &t \to x_0,
\end{aligned}
\end{equation}
and so the choice $\delta_+ := h^\frac{1-\eps}2$ works.
It can be easily checked that the other terms in the expansion are harmless. Finally, the decay of the remainders $r_n$ follows easily if $|\Re z - \Re U(x)|$ is not too small 
on $(x_0-\delta_-,x_0+\delta_+)$, 
which is satisfied when the
Davies' condition~\eqref{Davies} holds; 
as an illustration, we have
\begin{equation}
h^2 |r_0| \ls \frac{h}{|\Re z - \Re U(x)|^\frac 12}
\end{equation}
for all $x \in (x_0-\delta_-,x_0+\delta_+)$.

In summary, the semiclassical setting allows for many simplifications and a suitable behaviour of~$U$ around a fixed point $x_0$ only is needed to obtain pseudomodes (localising around $x_0$) as $h \to 0$. It is also clear that the previously used conditions of 
the type $\Im U'(x_0)\neq 0$ are not needed as we may use a larger neighbourhood of~$x_0$ and take a sufficiently large $\eta$ to satisfy \eqref{Re.psi-1.s-c} and obtain a decay of~$r_n$. 
\end{Example}

\begin{Example}[Strong local singularities]\label{ex:sing}
In all previous pseudomode constructions, 
we used the behaviour of the potential~$V$ at infinity. 
If~$V$ is sufficiently singular at a finite point, 
the construction of the present Section~\ref{sec:pseudo.gen} 
can be adapted accordingly.
We illustrate this on an example in $L^2(\Real_-)$ with
\begin{equation}\label{V.sing}
  V(x) := \frac{i}{|x|^\alpha}
  \qquad \text{for} \qquad x\in (-1,0), 
  \qquad \alpha >2, 
\end{equation}
and an arbitrary behaviour outside $(-1,0)$. We consider $\Real_-$ for convenience only so that \eqref{asm.eq.ImV.gen} holds  for $x \to 0-$ and the shape of already derived formulas is preserved.
Considering~$\Real_+$ instead of~$\Real_-$ and
further generalisations 
in the sense of Section~\ref{subsec:gc.pot.curv}
(like $\Re V \neq 0$ or $\nu > -1$) 
are straightforward. 
We emphasise in particular the potentials with 
$\Re V(x)=c/|x|^2$, $c \in\Real$, 
appearing in the radial part of 
higher-dimensional Schr\"odinger operators.

We follow the notations of Section~\ref{subsec:gc.pot.curv} 
and construct a pseudomode of the type~\eqref{mode.def.gen} 
around the turning point~$x_b$ of~$\Im V$
that tends to~$0-$ as $b \to +\infty$.  
In more detail, we take here
\begin{equation}
\begin{aligned}
\lambda &= a + ib, \qquad a,b \in \Real_+,
\\
\Im V(x_b)&=b, 
\qquad \delta:=\frac{|x_b|}{2}, 
\quad \Delta:= \frac \delta 4,
\end{aligned}
\end{equation}
with~$\delta$ going to zero as $b\to\infty$,
and the cut-off $\xi$  as well as intervals $\cJ_b$ and $\cJ_b'$ are as in \eqref{xi.def.gen}, \eqref{Jb.def}, respectively. The new condition on admissible $a$'s 
(corresponding to the simple case \eqref{V.sing}) reads
\begin{equation}\label{a.gen.sing}
b^{\frac 23(1+\frac 1\alpha)} \ls a  \ls b^{2(1-\frac 1\alpha)-\epsilon}
\end{equation}
with some $\epsilon>0$.

Following and slightly adapting the estimates in the proof of Proposition~\ref{prop:cut.gen}, we get for every $x \in \cJ_b \setminus \cJ_b'$ with $x>x_b$ that 
\begin{equation}\label{general.-1est.sing}
\int_{x_b}^x\Re (\lambda^\frac 12 \psi_{-1}'(t)) \, \der t
\gs
\frac{b^{1-\frac 1\alpha}}{a^\frac12 + b^\frac 12}.
\end{equation}
Here the importance of the assumed condition $\alpha>2$, 
as well as of \eqref{a.gen.sing}, 
is clearly visible in order to ensure that the right-hand side tends to infinity as $b\to+\infty$. 
Further, it can be straightforwardly checked  that the cut-off is indeed successful and an analogue of \eqref{xi'g.gen.2} holds; we remark that in the estimates like 
\eqref{gen.norm.1} and \eqref{gen.norm.2} we integrate \eg~over $(x_b,x_b+x_b^2)$.

The remainder estimate is also straightforward, using \eqref{rn.est}, we obtain altogether that with $V$ as in \eqref{V.sing} there exists a positive constant~$c$ such that
\begin{equation}\label{HV.f.dec.sing}
\frac{\|(H_V-\lambda) f\|_{L^2(\Real_-)}}{\|f\|_{L^2(\Real_-)}} 
= 
\BigO \left( \exp(-c b^\frac \epsilon 2 )
+ 
\frac{b^{1+\frac{n+1}{\alpha}}}{a^\frac{n+1}{2}} + \sum_{k=0}^{n-1}\sum_{l=2}^{n+1+k} \frac{b^{l+\frac{n+1+k}{\alpha}}}{a^{l+\frac{n-1+k}{2}}}
\right)
\end{equation}
as $b \to + \infty$ 
(then necessarily also $a \to +\infty$ due to~\eqref{a.gen.sing}).
Similarly as in Example~\ref{ex:pol.Omega}, 
we can check that if we strengthen~\eqref{a.gen.sing} to
\begin{equation}\label{a.gen.sing.dec}
b^{\frac 23(1+\frac 1\alpha)+\epsilon} \ls a  \ls b^{2(1-\frac 1\alpha)-\epsilon}
\end{equation}
with some $\epsilon>0$, then for a sufficiently large $n$ we indeed have a decay in \eqref{HV.f.dec.sing}.
\end{Example}


\appendix

\section{Proofs of Lemmata~\ref{lem:str} and \ref{lem:r_n.est}}

In the following, notations of Lemmata~\ref{lem:str} and \ref{lem:r_n.est} are used and $V$ is assumed to be sufficiently regular so that all appearing derivatives of it exist. 

In the first step, we investigate certain operations on $T_j^{r,s}$, defined in \eqref{Tjr.def}. To simplify notations, we view $T_j^{r,s}$ as a set of functions of the prescribed form with all possible choices of constants $c_\alpha$. We start with the following simple observations. 

\begin{enumerate}[(a)]
\item\label{remarkinduction(i)} If $r \geq 1$, then $\mathcal{I}_0^{r,r+1} = \emptyset$ and so $T_0^{r,r+1}= \{0\}.$
\item\label{remarkinduction(ii)} $T_0^{0,1} = \Com$  since $ \mathcal{I}_0^{0,1}=\{0\}. $  
\item\label{remarkinduction(iii)} $ T_{j}^{r,s}+T_{j}^{r,s}=T_{j}^{r,s}$ and so $\sum_{j=0}^{s}T_{j}^{r,s-j}+\sum_{j=0}^{s}T_{j}^{r,s-j}=\sum_{j=0}^{s}T_{j}^{r,s-j}. $
\item\label{remarkinduction(iv)} For $s_1\leq s_2$, $T_{j}^{r,s_1}  \subset T_{j}^{r,s_2}$ since taking $\alpha_i=0$ is allowed.
\item\label{remarkinduction(v)} $c \, T_{j}^{r,s}=T_{j}^{r,s} $ 
for any constant $c\in \mathbb{C}$.
\end{enumerate}

\begin{Lemma}\label{lem:T.op} 
Let $T_j^{r,s}$ be as in \eqref{Tjr.def}. Then
\begin{align}
\left(T_j^{r,s}\right)' &\subset T_j^{r+1,s+1},
\label{eq:T'}
\\
V'T_j^{r,s} & \subset T_{j+1}^{r+1,s},
\label{eq:V'T}
\\
T_{j_{1}}^{r_{1},s_{1}}T_{j_{2}}^{r_{2},s_{2}} &\subset T_{j_{1}+j_{2}}^{r_{1}+r_{2},\max\{s_{1},s_{2}\}} \subset T_{j_{1}+j_{2}}^{r_{1}+r_{2},s_{1}+s_{2}-1}.
\label{eq:T1T2}
\end{align}

\end{Lemma}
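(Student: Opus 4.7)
The plan is to verify each of the three inclusions directly on the defining monomials of the form $c_\alpha\,(V^{(1)})^{\alpha_1}\cdots(V^{(s)})^{\alpha_s}$, tracking how the two constraints
\[
\sum_{i=1}^s i\alpha_i = r
\qquad\text{and}\qquad
\sum_{i=1}^s \alpha_i = j
\]
transform under differentiation, multiplication by $V'$, and multiplication of monomials. By property \eqref{remarkinduction(iii)} and linearity, it suffices to handle a single monomial.

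For \eqref{eq:T'}, I apply the product rule to $(V^{(1)})^{\alpha_1}\cdots(V^{(s)})^{\alpha_s}$: the derivative is a sum over $i\in\{1,\dots,s\}$ of terms obtained by formally replacing one factor $V^{(i)}$ by $V^{(i+1)}$, producing the coefficient $\alpha_i$. The new multi-index $\beta$ has $\beta_i=\alpha_i-1$, $\beta_{i+1}=\alpha_{i+1}+1$, and $\beta_\ell=\alpha_\ell$ otherwise; hence $\sum_\ell \beta_\ell = j$ and $\sum_\ell \ell\beta_\ell = r-i+(i+1)=r+1$. The only subtlety is the case $i=s$, where the factor $V^{(s+1)}$ forces us to work with $s+1$ indices; this is precisely why the target is $T_j^{r+1,s+1}$ rather than $T_j^{r+1,s}$. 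By property \eqref{remarkinduction(iv)} the summands arising from $i<s$ also lie in $T_j^{r+1,s+1}$, completing the inclusion.

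For \eqref{eq:V'T}, multiplying by $V'$ changes $\alpha_1\mapsto\alpha_1+1$, leaving $\alpha_2,\dots,\alpha_s$ unchanged; the two defining sums become $r+1$ and $j+1$, giving membership in $T_{j+1}^{r+1,s}$. For the first inclusion in \eqref{eq:T1T2}, multiplying two monomials from $T_{j_1}^{r_1,s_1}$ and $T_{j_2}^{r_2,s_2}$ and padding multi-indices with zeros to common length $s:=\max\{s_1,s_2\}$ gives a monomial with exponents $\alpha_\ell+\gamma_\ell$, for which $\sum(\alpha_\ell+\gamma_\ell)=j_1+j_2$ and $\sum \ell(\alpha_\ell+\gamma_\ell)=r_1+r_2$. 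The second inclusion in \eqref{eq:T1T2} is just property \eqref{remarkinduction(iv)} applied with $\max\{s_1,s_2\}\le s_1+s_2-1$, valid whenever $s_1,s_2\ge 1$ (which is automatic in all nontrivial cases encountered in the proof of Lemmata~\ref{lem:str} and \ref{lem:r_n.est}).

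I do not anticipate any real obstacle: the proof is combinatorial bookkeeping, the only point worth caring about being the enlargement $s\to s+1$ in the differentiation step, which is handled naturally by the choice of target set.
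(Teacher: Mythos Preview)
Your proof is correct and follows essentially the same approach as the paper's own proof: both verify the three inclusions directly on monomials by tracking how the constraints $\sum i\alpha_i=r$ and $\sum\alpha_i=j$ transform under the respective operations, with the same observation that differentiation may introduce $V^{(s+1)}$ (forcing $s\to s+1$) and that $\max\{s_1,s_2\}\le s_1+s_2-1$ since $s_1,s_2\ge 1$.
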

\begin{proof}
To prove \eqref{eq:T'}, the product rule yields
	\begin{equation}
	\begin{aligned}
	&\left((V^{(1)})^{\alpha_1}(V^{(2)})^{\alpha_2+1} \dots (V^{(s)})^{\alpha_s} \right)'
	\\&\quad =  
	\alpha_1 (V^{(1)})^{\alpha_1-1}(V^{(2)})^{\alpha_2+1} \dots (V^{(s)})^{\alpha_s}  
	\\ & \qquad 
	+  \alpha_2 (V^{(1)})^{\alpha_1}(V^{(2)})^{\alpha_2-1}(V^{(3)})^{\alpha_3+1} \dots (V^{(s)}) 
	\\
	& \qquad   + \dots + \alpha_s (V^{(1)})^{\alpha_1} \dots (V^{(s)})^{\alpha_s-1}V^{(s+1)}.
	\end{aligned}
	\end{equation}
	Since $\sum_{i=1}^{s} \alpha_i=j$ and $\sum_{i=1}^{s} i\alpha_i=r$, we obtain that the sum of powers in every term is again $j$, \eg~
\begin{equation}
\alpha_{1}-1+\alpha_{2}+1+\dots+\alpha_{s}= \alpha_{1}+\alpha_{2}+\dots+\alpha_{s}=j,
\end{equation}
and the sum of powers multiplied by the order of the derivative is $r+1$, \eg~
\begin{equation}
1 (\alpha_{1}-1)+ 2 (\alpha_{2}+1)+\dots +s \alpha_{s} 
= 1+ 1 \alpha_{1}+2 \alpha_{2}+\dots+s \alpha_{s}=r+1.
\end{equation}
So \eqref{eq:T'} follows by the rule \ref{remarkinduction(iii)}.

To verify \eqref{eq:V'T}, observe that
\begin{equation}
V' (V^{(1)})^{\alpha_1}(V^{(2)})^{\alpha_2} \dots (V^{(s)})^{\alpha_s}
= (V^{(1)})^{\alpha_1+1}(V^{(2)})^{\alpha_2} \dots (V^{(s)})^{\alpha_s},
\end{equation}
and
\begin{equation}
\begin{aligned}
(\alpha_{1}+1)+\alpha_{2}+\dots+\alpha_{s}&=j+1,
\\
1 (\alpha_{1}+1) +2  \alpha_{2}+\dots+s \alpha_{s}
&= 1 \alpha_{1} +2 \alpha_{2}+\dots+s \alpha_{s}=s+1.
\end{aligned}
\end{equation}

To show \eqref{eq:T1T2}, we assume without the loss of generality 
that $s_1 \leq s_2 $. 
Hence
\begin{equation}
\begin{aligned}
&\sum_{\alpha\in \mathcal{I}_{j_{1}}^{r_{1},s_{1}}} c_{\alpha}(V^{(1)})^{\alpha_1}(V^{(2)})^{\alpha_2} \dots (V^{(s_1)})^{\alpha_{s_1}} \sum_{\beta\in \mathcal{I}_{j_{2}}^{r_{2},s_{2}}} c_{\beta}(V^{(1)})^{\beta_1}(V^{(2)})^{\beta_2} \dots (V^{(s_2)})^{\beta_{s_2}} &
\\& = \sum_{\substack{\alpha\in \mathcal{I}_{j_{1}}^{r_{1},s_{1}}, \\ \beta\in \mathcal{I}_{j_{2}}^{r_{2},s_{2}} }} c_{\alpha,\beta}(V^{(1)})^{\alpha_1+\beta_1}(V^{(2)})^{\alpha_2+\beta_2}\dots (V^{(s_1)})^{\alpha_{s_1}+\beta_{s_1}}(V^{(s_{1}+1)})^{\beta_{{s_1}+1}}\dots (V^{(s_2)})^{\beta_{s_2}}.
\end{aligned}
\end{equation}
For the powers in the resulting sum, we have 
\begin{equation}
\begin{aligned}
&1 (\alpha_1+\beta_1)+\dots+s_1  (\alpha_{s_1}+\beta_{s_1})+(s_{1}+1) \beta_{{s_1}+1}+\dots+s_{2} \beta_{s_2} \\
& \quad = 1 \alpha_1+\dots+s_{1} \alpha_{s_1}+1 \beta_1+\dots+s_{2} \beta_{s_2}=r_1+r_2,
\\
&(\alpha_1+\beta_1)+\dots+(\alpha_{s_1}+\beta_{s_1})+\beta_{{s_1}+1}+\dots+\beta_{s_2}
\\
& \quad =\alpha_1+\dots+\alpha_{s_1}+\beta_1+\dots+\beta_{s_2}=j_1+j_2,
\end{aligned}
\end{equation}
hence, 
\begin{equation} \label{maxs1s2}
T_{j_{1}}^{r_{1},s_{1}}T_{j_{2}}^{r_{2},s_{2}} 
\subset T_{j_{1}+j_{2}}^{r_{1}+r_{2},\max\{s_{1},s_{2}\}}.
\end{equation}
Since  $s_1 \geq 1$ and $ s_2 \geq 1$, we have $\max\{s_1,s_2\} \leq s_1+s_2-1$ and thus the rule \ref{remarkinduction(iv)} yields the second inclusion in \eqref{eq:T1T2}.
\end{proof}

In the next step, we find the form of the derivatives of $\psi_{-1}'$ and $1/\psi_{-1}'$.
\begin{Lemma}\label{lem:psi-1'.ind}
Let $\psi_{-1}'$ be as in \eqref{ODE} and $T_j^{r,s}$ be defined as in \eqref{Tjr.def}. Then, for every $m \in \Nat_0$, we have
\begin{align}
\psi_{-1}^{(m+1)} & \in \frac{\lambda^{-\frac{1}{2}}}{(\lambda-V)^{-\frac{1}{2}}}\sum_{j=0}^{m}\dfrac{T_j^{m,m+1-j}}{(\lambda-V)^j},
\label{psi.m}
\\
\left(\dfrac{1}{\psi_{-1}'}\right)^{(m)} & \in	\dfrac{\lambda^{\frac{1}{2}}}{(\lambda-V)^{\frac{1}{2}}}\sum_{j=0}^{m}\dfrac{T_{j}^{m,m+1-j}}{(\lambda-V)^{j}}.
\label{psi.-.m}
\end{align}
\end{Lemma}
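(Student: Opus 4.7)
The natural approach is a joint induction on $m \in \Nat_0$ for the two formulas, exploiting the algebraic closure properties of the families $T_j^{r,s}$ recorded in Lemma~\ref{lem:T.op} and observations \ref{remarkinduction(i)}--\ref{remarkinduction(v)}. For the base case $m=0$, both identities reduce to direct computations using the explicit form $\psi_{-1}' = i\lambda^{-1/2}(\lambda-V)^{1/2}$ and $1/\psi_{-1}' = -i\lambda^{1/2}(\lambda-V)^{-1/2}$. The enclosing sums collapse to their single $j=0$ term, and since $T_0^{0,1}=\Com$ by observation \ref{remarkinduction(ii)}, the constants $\pm i$ are admissible, so both formulas hold.

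For the inductive step on \eqref{psi.m}, I would first rewrite the hypothesis in the equivalent, slightly more transparent form
\begin{equation}
\psi_{-1}^{(m+1)} \in \lambda^{-\frac12}\sum_{j=0}^{m} T_j^{m,m+1-j}\,(\lambda-V)^{\frac12-j},
\end{equation}
and then differentiate termwise:
\begin{equation}
\psi_{-1}^{(m+2)} \in \lambda^{-\frac12}\sum_{j=0}^{m} \Bigl[ (T_j^{m,m+1-j})'\,(\lambda-V)^{\frac12-j} - (\tfrac12 - j)\,V'\,T_j^{m,m+1-j}\,(\lambda-V)^{-\frac12-j}\Bigr].
\end{equation}
By \eqref{eq:T'} the first bracket belongs to $T_j^{m+1,m+2-j}(\lambda-V)^{\frac12-j}$, which is exactly the desired shape with the same index $j$. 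By \eqref{eq:V'T} together with \ref{remarkinduction(v)}, the second bracket belongs to $T_{j+1}^{m+1,m+1-j}(\lambda-V)^{-\frac12-j}$; re-indexing $j'=j+1$ turns this into $T_{j'}^{m+1,m+2-j'}(\lambda-V)^{\frac12-j'}$, again of the required shape. Merging the two contributions via \ref{remarkinduction(iii)} over the overlapping range $1\leq j\leq m$, and noting that the new endpoints $j=0$ and $j=m+1$ each arise from only one of the two sums, yields the claim for $m+1$.

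The argument for \eqref{psi.-.m} is structurally identical: after writing the hypothesis as $\lambda^{1/2}\sum_{j=0}^m T_j^{m,m+1-j}(\lambda-V)^{-1/2-j}$ and differentiating, one again obtains two contributions, one in $T_j^{m+1,m+2-j}(\lambda-V)^{-1/2-j}$ coming from \eqref{eq:T'}, and one in $T_{j+1}^{m+1,m+1-j}(\lambda-V)^{-3/2-j}$ coming from \eqref{eq:V'T} applied to the factor $V'$ produced by differentiating $(\lambda-V)^{-1/2-j}$. Re-indexing and merging reproduces the formula with $m$ replaced by $m+1$.

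I expect the only real obstacle to be bookkeeping: one has to check that the three parameters $(j,r,s)$ of each appearing $T_j^{r,s}$ transform consistently under the two allowed operations (differentiation, multiplication by $V'$) and that the exponent of $(\lambda-V)$ tracks $j$ correctly after shifting. The two key observations that make the arithmetic close are that \eqref{eq:T'} increments both $r$ and $s$ by one while leaving $j$ fixed, whereas \eqref{eq:V'T} increments $r$ and $j$ by one while leaving $s$ fixed; combined with the fact that differentiating $(\lambda-V)^{\alpha}$ both produces a $V'$ and shifts the exponent by $-1$, the indices in $T_{j'}^{m+1,m+2-j'}$ end up in perfect alignment for the new index $j'\in\{j,j+1\}$. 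No cancellation or sign constraint is needed since $T_j^{r,s}$ is closed under multiplication by arbitrary complex scalars by \ref{remarkinduction(v)}.
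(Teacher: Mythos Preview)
Your proposal is correct and follows essentially the same inductive strategy as the paper: verify the base case via $T_0^{0,1}=\Com$, differentiate the inductive hypothesis termwise, and use \eqref{eq:T'}, \eqref{eq:V'T} together with rules \ref{remarkinduction(iii)} and \ref{remarkinduction(v)} to absorb the resulting terms into the target sum after a shift of index. The only difference is cosmetic: you bundle the prefactor $(\lambda-V)^{\pm 1/2}$ into the single power $(\lambda-V)^{\pm 1/2 - j}$ before differentiating, whereas the paper keeps the prefactor separate and applies the product rule twice, but the bookkeeping and the invoked lemmata are identical.
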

\begin{proof}
We give the detailed induction proof with respect to $m \in \Nat_0$
for \eqref{psi.m} only, the proof of \eqref{psi.-.m} is fully analogous. Recall that $\psi_{-1}'= i \lambda^{-\frac 12} (\lambda-V)^{\frac{1}{2}}$ and so the base step can be easily verified in both cases using the rule \ref{remarkinduction(ii)}.

We make the induction step $m \rightarrow m+1$, \ie~we assume that the formula for $\psi_{-1}^{(m)}$ holds. Using the rule \ref{remarkinduction(iii)} and always incorporating the constants arising by taking the derivatives in the constants in the sets $T$, we arrive at
\begin{align} 
\psi_{-1}^{(m+1)}&=\left(\psi_{-1}^{(m)} \right)' \in \left(
\frac{\lambda^{-\frac{1}{2}}}{(\lambda-V)^{-\frac{1}{2}}} \sum_{j=0}^{m-1}\dfrac{T_j^{m-1,m-j}}{(\lambda-V)^j} 
\right)'
\\
&=
\left(
\frac{\lambda^{-\frac{1}{2}}}{(\lambda-V)^{-\frac{1}{2}}}\right)' 
\sum_{j=0}^{m-1}\frac{T_j^{m-1,m-j}}{(\lambda-V)^j} +  \frac{\lambda^{-\frac{1}{2}}}{(\lambda-V)^{-\frac{1}{2}}} \sum_{j=0}^{m-1}
\left(
\frac{T_j^{m-1,m-j}}{(\lambda-V)^j} 
\right)'
\\&= 
\frac{\lambda^{-\frac{1}{2}}V'}{(\lambda-V)^{\frac{1}{2}}} \sum_{j=0}^{m-1} \frac{T_j^{m-1,m-j}}{(\lambda-V)^j} + \frac{\lambda^{-\frac{1}{2}}}{(\lambda-V)^{-\frac{1}{2}}} \sum_{j=0}^{m-1}
\left(
\frac{\left(T_j^{m-1,m-j}\right)'}{(\lambda-V)^{j}} + \frac{V'T_j^{m-1,m-j}}{(\lambda-V)^{j+1}}
\right)
\\[-5mm]
&= 
\frac{\lambda^{-\frac{1}{2}}}{(\lambda-V)^{-\frac{1}{2}}} 
\sum_{j=0}^{m-1} \frac{V'T_j^{m-1,m-j}}{(\lambda-V)^{j+1}} + \frac{\lambda^{-\frac{1}{2}}}{(\lambda-V)^{-\frac{1}{2}}} 
\sum_{j=0}^{m-1} \frac{\left(T_j^{m-1,m-j}\right)'}{(\lambda-V)^{j}}.
\end{align}
Using the rule \ref{remarkinduction(i)}, we can let the first sum start with $j=-1$ and Lemma \ref{lem:T.op} yields 
\begin{align}
\begin{aligned} \psi_{-1}^{(m+1)}& \in \dfrac{\lambda^{-\frac{1}{2}}}{(\lambda-V)^{-\frac{1}{2}}}\sum_{j=-1}^{m-1}\dfrac{T_{j+1}^{m,m-j}}{(\lambda-V)^{j+1}}+ \dfrac{\lambda^{-\frac{1}{2}}}{(\lambda-V)^{-\frac{1}{2}}}\sum_{j=0}^{m-1}\dfrac{T_{j}^{m,m-j+1}}{(\lambda-V)^{j}} 
\\
&\subset 
\dfrac{\lambda^{-\frac{1}{2}}}{(\lambda-V)^{-\frac{1}{2}}}
\left(\sum_{j=0}^{m}\dfrac{T_{j}^{m,m-j+1}}{(\lambda-V)^{j}} 
+ \sum_{j=0}^{m-1}\dfrac{T_{j}^{m,m-j+1}}{(\lambda-V)^{j}}\right)
\\
&=
\dfrac{\lambda^{-\frac{1}{2}}}{(\lambda-V)^{-\frac{1}{2}}}
\sum_{j=0}^{m}\dfrac{T_{j}^{m,m-j+1}}{(\lambda-V)^{j}},
\end{aligned}
\end{align}
where we use the rule \ref{remarkinduction(iii)} in the last step. Thus \eqref{psi.m} is proved.
\end{proof}

Now we are ready to prove Lemmata~\ref{lem:str} and \ref{lem:r_n.est}. 

\begin{proof}[Proof of Lemma~\ref{lem:str}]
We give an induction argument 
with respect to the index $k\in[[-1,n-1]]$. 
To verify the base case $k=-1$ and $m\in[[1,n+2]]$, we apply \eqref{psi.m} from Lemma \ref{lem:psi-1'.ind} above. As the induction step we take $k\rightarrow k+1$, \ie~we start with assuming that for a fixed $k$ \eqref{psi.k.m} holds for all $m \in [[1,n+1-k]]$ and we have to show that
\begin{equation}
\psi_{k+1}^{(m)}
\in \frac{\lambda^{\frac{k+1}{2}}}{(\lambda-V)^{\frac{k+1}{2}}} \sum_{j=0}^{k+1+m} 
\frac{T_j^{k+1+m,k+m+2-j}}{(\lambda-V)^j}, \qquad  m\in[[1,n-k]].
\end{equation}

By formula \eqref{ODE}, we have 
\begin{equation} 
\begin{aligned}
\psi_{k+1}^{(m)}=(\psi_{k+1}')^{(m-1)}&=\Bigg[\frac{1}{2\psi_{-1}'}\Bigg(\psi_{k}'' - \sum_{\substack{\omega+\chi=k \\ \omega,\chi\not=-1}}^{}\psi_{\omega}'\psi_{\chi}'\Bigg)\Bigg]^{(m-1)}
\\& =\Bigg(\frac{1}{2\psi_{-1}'}\psi_{k}''\Bigg)^{(m-1)}-\Bigg(\frac{1}{2\psi_{-1}'}\sum_{\substack{\omega+\chi=k \\ \omega,\chi\not=-1}}^{}\psi_{\omega}'\psi_{\chi}'\Bigg)^{(m-1)}.
\end{aligned}	
\end{equation}
Using the Leibniz product rule once for the first term and twice for the second one, we arrive at
\begin{equation}\label{Leibniz}
\begin{aligned}
\psi_{k+1}^{(m)}=& \sum_{i=0}^{m-1} \binom{m-1}{i}
\Big(\frac{1}{2\psi_{-1}'}\Big)^{(i)}\psi_{k}^{(m+1-i)}
\\
&-\sum_{i=0}^{m-1} \Bigg[
\binom{m-1}{i}\Big(\dfrac{1}{2\psi_{-1}'}\Big)^{(i)}\sum_{\substack{\omega+\chi=k \\ \omega,\chi\not=-1}}^{} \sum_{p=0}^{m-1-i}\binom{m-1-i}{p} \psi_{\omega}^{(p+1)}\psi_{\chi}^{(m-i-p)}
\Bigg].
\end{aligned} 
\end{equation}

Let us now have a more detailed look at the term $ (\psi_{\omega})^{(p+1)}(\psi_{\chi})^{(m-i-p)}$.
We notice first that $\omega+\chi=k$, $ \omega,\chi\not=-1 $ and $(p+1)$, $(m-i-p) \in [[1,n-k]]$, which enables us to use the induction assumption to rewrite both $ \psi_{\omega}^{(p+1)}$ and $\psi_{\chi}^{(m-i-p)}$. Namely, the formula for the product of series yields
\begin{equation}
\begin{aligned}
&\psi_{\omega}^{(p+1)}\psi_{\chi}^{(m-i-p)} 
\\
& \in
\frac{\lambda^{\frac{\omega}{2}}}{(\lambda-V)^{\frac{\omega}{2}}} \sum_{j_{1}=0}^{\omega+p+1} \frac{T_{j_{1}}^{\omega+p+1,\omega+p+2-j_{1}}}{(\lambda-V)^{j_{1}}}
\frac{\lambda^{\frac{\chi}{2}}}{(\lambda-V)^{\frac{\chi}{2}}} \sum_{j_{2}=0}^{\chi+m-i-p}\dfrac{T_{j_{2}}^{\chi+m-i-p,\chi+m-i-p+1-j_{2}}}{(\lambda-V)^{j_{2}}}
\\
&\subset\frac{\lambda^{\frac{\omega+\chi}{2}}}{(\lambda-V)^{\frac{\omega+\chi}{2}}}\sum_{j=0}^{\omega+\chi+m+1-i} \sum_{q=0}^{j}\frac{T_{q}^{\omega+p+1,\omega+p+2-q}}{(\lambda-V)^{q}}
\dfrac{T_{j-q}^{\chi+m-i-p,\chi+m-i-p+1-j+q}}{(\lambda-V)^{j-q}},
\end{aligned}
\end{equation}
where we set 
\begin{equation}\label{T.conv}
T_{l}^{m,n}=\emptyset, \quad  l \geq m.
\end{equation}

Using Lemma \ref{lem:T.op} and the fact that $\omega+\chi=k$, we obtain
\begin{equation}
\psi_{\omega}^{(p+1)}\psi_{\chi}^{(m-i-p)}
\in 
\frac{\lambda^{\frac{k}{2}}}{(\lambda-V)^{\frac{k}{2}}} 
\sum_{j=0}^{k+m+1-i} \sum_{q=0}^j
\frac{T_{j}^{k+m+1-i,k+m+2-i-j}}{(\lambda-V)^{j}},
\end{equation}
where the resulting terms no longer depend on $p$, $\omega$ or $\chi$. 
Coming back to~\eqref{Leibniz},
by Lemma \ref{lem:psi-1'.ind} and the induction assumption, we get further that
\begin{equation}
\begin{aligned}
\psi_{k+1}^{(m)} &\in \sum_{i=0}^{m-1} \frac{\lambda^{\frac{1}{2}}}{(\lambda-V)^{\frac{1}{2}}} \sum_{j_{3}=0}^{i} \frac{T_{j_{3}}^{i,i+1-j_{3}}}{(\lambda-V)^{j_{3}}}
\Bigg(
\frac{\lambda^{\frac{k}{2}}}{(\lambda-V)^{\frac{k}{2}}} \sum_{j_{4}=0}^{k+m+1-i} \frac{T_{j_{4}}^{k+m+1-i,k+m+2-i-j_{4}}}{(\lambda-V)^{j_{4}}}
\\
& \qquad  +  \sum_{\substack{\omega+\chi=k \\ \omega,\chi\not=-1}}^{}\sum_{p=0}^{m-1-i}\dfrac{\lambda^{\frac{k}{2}}}{(\lambda-V)^{\frac{k}{2}}}\sum_{j_{4}=0}^{k+m+1-i}\dfrac{T_{j_{4}}^{k+m+1-i,k+m+2-i-j_{4}}}{(\lambda-V)^{j_{4}}} 
\Bigg).
\end{aligned}
\end{equation}
By the rule \ref{remarkinduction(iii)}, the formula for the product of series (with the convention \eqref{T.conv}) and the rule~(e) in the last step, we obtain
\begin{equation}
\begin{aligned}
\psi_{k+1}^{(m)}
& \in \sum_{i=0}^{m-1}\frac{\lambda^{\frac{k+1}{2}}}{(\lambda-V)^{\frac{k+1}{2}}}\sum_{j_{3}=0}^{i}\dfrac{T_{j_{3}}^{i,i+1-j_{3}}}{(\lambda-V)^{j_{3}}}\sum_{j_{4}=0}^{k+m+1-i}\dfrac{T_{j_{4}}^{k+m+1-i,k+m+2-i-j_{4}}}{(\lambda-V)^{j_{4}}}
\\
&=
\sum_{i=0}^{m-1}\dfrac{\lambda^{\frac{k+1}{2}}}{(\lambda-V)^{\frac{k+1}{2}}}\sum_{j=0}^{k+m+1}\sum_{q=0}^{j}\dfrac{T_{q}^{i,i+1-q}}{(\lambda-V)^{q}}
\dfrac{T_{j-q}^{k+m+1-i,k+m+2-i-j+q}}{(\lambda-V)^{j-q}} 
\\
& \subset \sum_{i=0}^{m-1}\dfrac{\lambda^{\frac{k+1}{2}}}{(\lambda-V)^{\frac{k+1}{2}}}\sum_{j=0}^{k+m+1}\sum_{q=0}^{j}\dfrac{T_{j}^{k+m+1,k+m+2-j}}{(\lambda-V)^{j}}
\\
&=
\dfrac{\lambda^{\frac{k+1}{2}}}{(\lambda-V)^{\frac{k+1}{2}}}\sum_{j=0}^{k+m+1}\dfrac{T_{j}^{k+m+1,k+m+2-j}}{(\lambda-V)^{j}}.
\end{aligned}
\end{equation}
This concludes the proof of Lemma~\ref{lem:str}.
\end{proof}

\begin{proof}[Proof of Lemma~\ref{lem:r_n.est}]
Recalling~\eqref{binomial}, we can write the remainder $r_n$ in the following way
\begin{equation}\label{r_nmits1unds2}
\begin{aligned}r_n & = \sum_{k=n-1}^{2(n-1)}\lambda^{-\frac{k}{2}}\phi_{k+1} 
= \sum_{k=n-1}^{2(n-1)} \lambda^{-\frac{k}{2}} \Bigg(\psi_{k}''- \! \! \! \!  \sum_{\substack{\omega+\chi=k \\ -1\leq\omega,\chi\leq  n-1}}^{}\psi_{\omega}'\psi_{\chi}'\Bigg)
\\
&=\underbrace{\sum_{k=n-1}^{2(n-1)}\lambda^{-\frac{k}{2}}\psi_{k}''}_{S_1} -\underbrace{\sum_{k=n-1}^{2(n-1)}\lambda^{-\frac{k}{2}} \sum_{\substack{\omega+\chi=k \\ -1\leq\omega,\chi\leq n-1}}^{}\psi_{\omega}'\psi_{\chi}'}_{S_{2}}.
\end{aligned}
\end{equation}
By the rule \ref{remarkinduction(ii)}, the fact that $\psi_k=0$ for $k>n-1$ and Lemma~\ref{lem:str}, we have
\begin{equation}
\begin{aligned}
S_1 &\in \lambda^{-\frac{n-1}{2}} \frac{\lambda^{\frac{n-1}{2}}}{(\lambda-V)^{\frac{n-1}{2}}} \sum_{j=0}^{n+1}\frac{T_{j}^{n+1,n+2-j}}{(\lambda-V)^{j}}
= \frac{1}{(\lambda-V)^{\frac{n-1}{2}}} \sum_{j=1}^{n+1} \frac{T_{j}^{n+1,n+2-j}}{(\lambda-V)^{j}}, 
\\
S_2& \in \sum_{k=n-1}^{2(n-1)} \lambda^{-\frac{k}{2}} 
\! \! \! \! \!  \!  \!  \! 
\sum_{\substack{\omega+\chi=k \\ -1\leq\omega,\chi\leq n-1}}
\! \! \! \!  \! \!  
\frac{\lambda^{\frac{\omega}{2}}}{(\lambda-V)^{\frac{\omega}{2}}} \sum_{j_{1}=0}^{\omega+1}\frac{T_{j_{1}}^{\omega+1,\omega+2-j_{1}}}{(\lambda-V)^{j_{1}}}  
\frac{\lambda^{\frac{\chi}{2}}}{(\lambda-V)^{\frac{\chi}{2}}} \sum_{j_{2}=0}^{\chi+1}\frac{T_{j_{2}}^{\chi+1,\chi+2-j_{2}}}{(\lambda-V)^{j_{2}}}    
\\
&=\sum_{k=n-1}^{2(n-1)} \frac{1}{(\lambda-V)^{\frac{k}{2}}} \sum_{\substack{\omega+\chi=k \\ -1\leq\omega,\chi\leq n-1}}
\underbrace{\Bigg( 
\sum_{j_{1}=1}^{\omega+1}\frac{T_{j_{1}}^{\omega+1,\omega+2-j_{1}}}{(\lambda-V)^{j_{1}}}  
\sum_{j_{2}=1}^{\chi+1}\frac{T_{j_{2}}^{\chi+1,\chi+2-j_{2}}}{(\lambda-V)^{j_{2}}}    
\Bigg)}_{S_3}.
\end{aligned}
\end{equation}
Since $\max \{\omega+2-j_{1}\}$, $\max \{\chi+2-j_{2}\} \leq n$ and $ \omega+\chi=k$, by the formula for the product of series, the rule~(d) and Lemma~\ref{lem:T.op} with the maximum in \eqref{eq:T1T2}, we get
\begin{equation}
\begin{aligned}
S_3 & \subset \sum_{j_{1}=1}^{\omega+1}\frac{T_{j_{1}}^{\omega+1,n}}{(\lambda-V)^{j_{1}}}  
\sum_{j_{2}=1}^{\chi+1}\frac{T_{j_{2}}^{\chi+1,n}}{(\lambda-V)^{j_{2}}}
= 
\sum_{j_{1}=0}^{\omega}
\frac{T_{j_{1}+1}^{\omega+1,n}}{(\lambda-V)^{j_{1}+1}}  
\sum_{j_{2}=0}^{\chi}
\frac{T_{j_{2}+1}^{\chi+1,n}}{(\lambda-V)^{j_{2}+1}}
\\&=
\sum_{j=0}^{\omega+\chi} \sum_{q=0}^{j}
\frac{T_{q+1}^{\omega+1,n}}{(\lambda-V)^{q+1}} 
\frac{T_{j-q+1}^{\chi+1,n}}{(\lambda-V)^{j-q+1}}
\subset 
\sum_{j=0}^{\omega+\chi} 
\frac{T_{j+2}^{\omega+\chi+2,n}}{(\lambda-V)^{j+2}}
\\&=
 \sum_{j=2}^{\omega+\chi+2}\dfrac{T_{j}^{\omega+\chi+2,n}}{(\lambda-V)^{j}}
= \sum_{j=2}^{k+2}\dfrac{T_{j}^{k+2,n}}{(\lambda-V)^{j}}.
\end{aligned}
\end{equation}
Observe that $S_3$ does not depend on $\omega$ and $\chi$ wherefore the sum over $\omega$ and $\chi$ in $S_2$ disappears (for it can be included in the constants appearing in~$T$). 

Inserting formulas for $S_1$, $S_2$ and $S_3$ to \eqref{r_nmits1unds2}, we have
\begin{equation}
\begin{aligned}
r_n & \in \frac{1}{(\lambda-V)^{\frac{n-1}{2}}} \sum_{j=1}^{n+1}\frac{T_{j}^{n+1,n+2-j}}{(\lambda-V)^{j}} + \sum_{k=n-1}^{2(n-1)} \frac{1}{(\lambda-V)^{\frac{k}{2}}} \sum_{j=2}^{k+2}\dfrac{T_{j}^{k+2,n}}{(\lambda-V)^{j}}
\\&=
\frac{1}{(\lambda-V)^{\frac{n-1}{2}}}
\underbrace{\sum_{j=1}^{n+1}\frac{T_{j}^{n+1,n+2-j}}{(\lambda-V)^{j}}}_{S_4} + \underbrace{\sum_{k=0}^{n-1}\frac{1}{(\lambda-V)^{\frac{n-1+k}{2}}} \sum_{j=2}^{n+1+k}\dfrac{T_{j}^{n+1+k,n}}{(\lambda-V)^{j}}}_{S_5}.
\end{aligned}
\end{equation}
Writing the first summand of $S_4$ separately, we get 
\begin{equation}
\begin{aligned}
r_n &\in  
\frac{1}{(\lambda-V)^{\frac{n-1}{2}}} \frac{T_{1}^{n+1,n+1}}{\lambda-V} +
\frac{1}{(\lambda-V)^{\frac{n-1}{2}}} 
\sum_{j=2}^{n+1}\frac{T_{j}^{n+1,n+2-j}}{(\lambda-V)^{j}} 
\\
&
\quad + \sum_{k=0}^{n-1} \frac{1}{(\lambda-V)^{\frac{n-1+k}{2}}} 
\sum_{j=2}^{n+1+k} \frac{T_{j}^{n+1+k,n}}{(\lambda-V)^{j}}.
\end{aligned}
\end{equation}
Using the fact that $T_{1}^{n+1,n+1}= \Com V^{(n+1)}$ and writing the first summand of $S_5$ separately, we obtain
\begin{equation}
\begin{aligned}
r_n& \in 
\frac{V^{(n+1)}}{(\lambda-V)^{\frac{n+1}{2}}}+
\frac{1}{(\lambda-V)^{\frac{n-1}{2}}}
\sum_{j=2}^{n+1}\frac{T_{j}^{n+1,n+2-j}}{(\lambda-V)^{j}}
\\& \quad  +\frac{1}{(\lambda-V)^{\frac{n-1}{2}}}
\sum_{j=2}^{n+1}\dfrac{T_{j}^{n+1,n}}{(\lambda-V)^{j}} 
+\sum_{k=1}^{n-1}\dfrac{1}{(\lambda-V)^{\frac{n-1+k}{2}}}
\sum_{j=2}^{n+1+k}\dfrac{T_{j}^{n+1+k,n}}{(\lambda-V)^{j}}.
\end{aligned}
\end{equation}
Observe that $\max\{n+2-j\}\leq n$. Thus, by the rules \ref{remarkinduction(iv)} and \ref{remarkinduction(iii)}, we get
\begin{equation}
\begin{aligned}
r_n& \in \frac{V^{(n+1)}}{(\lambda-V)^{\frac{n+1}{2}}} +
\underbrace{\dfrac{1}{(\lambda-V)^{\frac{n-1}{2}}}
\sum_{j=2}^{n+1}\frac{T_{j}^{n+1,n}}{(\lambda-V)^{j}}}_{\text{the summand for} \ k=0} 
+\sum_{k=1}^{n-1}\dfrac{1}{(\lambda-V)^{\frac{n-1+k}{2}}}
\sum_{j=2}^{n+1+k}\dfrac{T_{j}^{n+1+k,n}}{(\lambda-V)^{j}}
\\
&=\frac{V^{(n+1)}}{(\lambda-V)^{\frac{n+1}{2}}} 
+\sum_{k=0}^{n-1}\dfrac{1}{(\lambda-V)^{\frac{n-1+k}{2}}}
\sum_{j=2}^{n+1+k}\dfrac{T_{j}^{n+1+k,n}}{(\lambda-V)^{j}}.
\end{aligned}
\end{equation}
Hence, the estimate \eqref{rn.est} follows.
\end{proof}

\subsection*{Acknowledgements}
The authors would like to express their gratitude to 
the \emph{American Institute of Mathematics} (AIM) for a support to organise
the workshop~\cite{AIM-2015}, which partially stimulated the present research.
D.K. was partially supported by the GACR grant No.\ 18-08835S.
Until December 2017, the research of P.S.\ was supported by 
the \emph{Swiss National Science Foundation}, 
SNF Ambizione grant No. PZ00P2\_154786.
%
%

%
\bibliography{bib53}
\bibliographystyle{amsplain}

\end{document}